\theoremstyle{plain}
\newtheorem{thm}{Theorem}[section]
\newtheorem{thmA}{Theorem}
\newtheorem*{thm*}{Theorem}
\newtheorem{lm}[thm]{Lemma}
\newtheorem{cor}[thm]{Corollary}
\newtheorem*{cor*}{Corollary}
\newtheorem{prop}[thm]{Proposition}
\newtheorem*{conj*}{Conjecture}
\theoremstyle{remark}
\newtheorem*{remark}{Remark}
\newtheorem*{thank}{Acknowledgments}
\theoremstyle{definition}
\newtheorem*{defn*}{Definition}
\newtheorem{Remark}[thm]{Remark}
\newtheorem{defn}[thm]{Definition}
\newcommand{\nc}{\newcommand}
\newcommand{\beq}{\begin{equation}}
\newcommand{\eeq}{\end{equation}}
\newcommand{\bpmx}{\begin{pmatrix}}
\newcommand{\epmx}{\end{pmatrix}}
\newcommand{\bbmx}{\begin{bmatrix}}
\newcommand{\ebmx}{\end{bmatrix}}
\newcommand{\wh}{\widehat}
\newcommand{\wtd}{\widetilde}
\newcommand{\beqcd}[1]{\begin{equation*}\label{#1}\tag{#1}}
\newcommand{\eeqcd}{\end{equation*}}
\numberwithin{equation}{section}
\newenvironment{mylist}{
  \begin{enumerate}{}{%
      \setlength{\itemsep}{5pt} \setlength{\parsep}{0in}
      \setlength{\parskip}{0in} \setlength{\topsep}{0in}
      \setlength{\partopsep}{0in}
      \setlength{\leftmargin}{0.17in}}}{\end{enumerate}}
\def\parref#1{\ref{#1}}
\def\thmref#1{Theorem~\parref{#1}}
\def\propref#1{Prop.~\parref{#1}}
\def\corref#1{Cor.~\parref{#1}}     \def\remref#1{Remark~\parref{#1}}
\def\secref#1{\S\parref{#1}}
\def\chref#1{Chapter~\parref{#1}}
\def\lmref#1{Lemma~\parref{#1}}
\def\subsecref#1{\S\parref{#1}}
\def\defref#1{Def.~\parref{#1}}
\def\assref#1{Assumption~\parref{#1}}
\def\hypref#1{Hypothesis~\parref{#1}}
\def\subsubsecref#1{\S\parref{#1}}
\def\egref#1{Example~\parref{#1}}
\def\conjref#1{Conjecture~\parref{#1}}
\def\makeop#1{\expandafter\def\csname#1\endcsname
  {\mathop{\rm #1}\nolimits}\ignorespaces}
\def\Sel{Sel}
\def\Ord{{\mathrm{ord}}}
\DeclareMathOperator{\Eis}{Eis}
\DeclareMathAlphabet{\mathpzc}{OT1}{pzc}{m}{it}
\DeclareSymbolFont{cyrletters}{OT2}{wncyr}{m}{n}
\DeclareMathSymbol{\SHA}{\mathalpha}{cyrletters}{"58}
\def\makebb#1{\expandafter\def
  \csname bb#1\endcsname{{\mathbb{#1}}}\ignorespaces}
\def\makebf#1{\expandafter\def\csname bf#1\endcsname{{\bf
      #1}}\ignorespaces}
\def\makegr#1{\expandafter\def
  \csname gr#1\endcsname{{\mathfrak{#1}}}\ignorespaces}
\def\makescr#1{\expandafter\def
  \csname scr#1\endcsname{{\EuScript{#1}}}\ignorespaces}
\def\makecal#1{\expandafter\def\csname cal#1\endcsname{{\mathcal
      #1}}\ignorespaces}
\def\doLetters#1{#1A #1B #1C #1D #1E #1F #1G #1H #1I #1J #1K #1L #1M
                 #1N #1O #1P #1Q #1R #1S #1T #1U #1V #1W #1X #1Y #1Z}
\def\doletters#1{#1a #1b #1c #1d #1e #1f #1g #1h #1i #1j #1k #1l #1m
                 #1n #1o #1p #1q #1r #1s #1t #1u #1v #1w #1x #1y #1z}
\def\abs#1{\left|#1\right|}
\def\norm#1{\lVert#1\rVert}
\def\Fpbar{\bar\bbF_p}
\def\Qp{\Q_p}
\def\Qbar{\bar\Q}
\def\Zbar{\bar{\Z}}
\def\Zp{\Z_p}
\def\rhobar{\bar{\rho}}
\def\rmT{{\mathrm T}}
\def\rmN{{\mathrm N}}
\def\cA{{\mathcal A}}  
\def\cD{\mathcal D}
\def\cG{{\mathcal G}}
\def\cL{{\mathcal L}}
\def\cH{{\mathcal H}}
\def\cI{\mathcal I}
\def\cK{{\mathcal K}}  
\def\cM{\mathcal M}
\def\cR{{\mathcal R}}
\def\cO{\mathcal O}
\def\cS{{\mathcal S}}
\def\cf{{\mathcal f}}
\def\cW{{\mathcal W}}
\def\cZ{\mathcal Z}
\def\cX{\mathcal X}
\def\cP{{\mathcal P}}
\def\cU{\mathcal U}
\def\bfc{\mathbf c}
\def\bfM{\mathbf M}
\def\bff{\mathbf f}
\def\bfw{\mathbf w}
\def\bftheta{\boldsymbol{\theta}}
\def\bfal{\boldsymbol{\alpha}}
\def\bbI{\mathbb I}
\newcommand{\Z}{\mathbf Z}
\newcommand{\Q}{\mathbf Q}
\newcommand{\R}{\mathbf R}
\newcommand{\C}{\mathbf C}
\newcommand{\A}{\mathbf A}    
\def\bbmu{\boldsymbol{\mu}}
\def\frakp{{\mathfrak p}}
\def\frakP{\mathfak P}
\def\frakq{\mathfrak q}
\def\frakm{\mathfrak m}
\def\frakl{\mathfrak l}
\def\frakP{\mathfrak P}
\def\frakD{\mathfrak D}
\def\frakX{\mathfrak X}
\def\frakN{\mathfrak N}
\def\bfone{{\mathbf 1}}
\def\wbar{\bar{w}}
\def\wbar{\bar{w}}
\def\padic{\text{$p$-adic }}
\def\Frob{\mathrm{Frob}}
\newcommand{\<}{\langle}   
\renewcommand{\>}{\rangle} 
\def\isoto{\stackrel{\sim}{\to}}
\def\surjto{\twoheadrightarrow}
\def\ot{\otimes}
\def\hookto{\hookrightarrow}
\def\longto{\longrightarrow}
\def\ol{\overline}  \nc{\opp}{\mathrm{opp}} \nc{\ul}{\underline}
\newcommand{\pair}[2]{\< #1, #2\>}
\newcommand{\pairing}{\pair{\,}{\,}}
\def\XYmatrix{\xymatrix@M=8pt} 
\def\ncmd{\newcommand}
\ncmd{\xysubset}[1][r]{\ar@<-2.5pt>@{^(-}[#1]\ar@<2.5pt>@{_(-}[#1]}
\ncmd{\XYmatrixc}[1]{\vcenter{\XYmatrix{#1}}}
\ncmd{\xyto}[1][r]{\ar@{->}[#1]}
\ncmd{\xyinj}[1][r]{\ar@{^(->}[#1]}
\ncmd{\xysurj}[1][r]{\ar@{->>}[#1]}
\ncmd{\xyline}[1][r]{\ar@{-}[#1]}
\ncmd{\xydotsto}[1][r]{\ar@{.>}[#1]}
\ncmd{\xydots}[1][r]{\ar@{.}[#1]}
\ncmd{\xyleadsto}[1][r]{\ar@{~>}[#1]}
\ncmd{\xyeq}[1][r]{\ar@{=}[#1]} \ncmd{\xyequal}[1][r]{\ar@{=}[#1]}
\ncmd{\xyequals}[1][r]{\ar@{=}[#1]}
\ncmd{\xymapsto}[1][r]{l\ar@{|->}[#1]}\ncmd{\xyimplies}[1][r]{\ar@{=>}[#1]}
\ncmd{\xyiso}{\ar[r]_-{\sim}}
\def\injxy{\ar@{^(->}}
\newcommand{\pMX}[4]{\begin{pmatrix}
{#1}& {#2}\\
{#3}&{#4}\end{pmatrix} }
 \newcommand{\DII}[2]{\begin{pmatrix}{#1}&0
 \\0&{#2}\end{pmatrix}}
\newcommand{\seesaw}[4]{{#1}\ar@{-}[rd]\ar@{-}[d]&{#2}\ar@{-}[d]\\
{#3}\ar@{-}[ru]&{#4}}
\def\ie{i.e. }
\def\cf{\mbox{{\it cf.} }}
\def\loccit{\mbox{{\it loc.cit.} }}
\def\mapR{\smash{\mathop{\longrightarrow}}}
\newcommand{\exact}[3]{
0\mapR{#1}\mapR{#2}\mapR{#3}\mapR 0 }
\def\uf{\varpi} 
\def\Abs{{|\!\cdot\!|}} 
\def\ndivides{\nmid}
\def\x{{\times}}
\def\onehalf{{\frac{1}{2}}}
\def\e{\varepsilon} 
\def\al{\alpha}
\def\om{\omega}
\def\dirlim{\varinjlim}
\def\prolim{\varprojlim}
\def\iso{\simeq}
\def\con{\equiv}
\def\bksl{\backslash}
\newcommand\stt[1]{\left\{#1\right\}}
\def\ep{\epsilon}
\def\lam{\lambda}
\def\pii{\pi i}
\def\sg{\sigma}
\def\vp{\varphi}
\def\disjoint{\bigsqcup}
\def\dx{d^\x}
\def\AKf{\A_{\cK,f}}
\def\AK{\A_\cK}
\def\setp{{(p)}}
\newcommand{\powerseries}[1]{\llbracket{#1}\rrbracket}
\renewcommand\pmod[1]{\,(\mbox{mod }{#1})}
\renewcommand\Re{\text{Re}\,}
\newcommand\Dmd[1]{\left<{#1}\right>} 
\def\Cp{\C_p}
\def\ram{{\mathrm{ram}}}
\def\alg{\mathrm{alg}} 
\def\CMP{\bftheta}
\def\cmpt{\varsigma}
\def\pme{q}
\def\cK{K}
\def\setn{{(n)}}
\def\cmptv{\cmpt_\pme}
\def\w{\frakq}
\def\wbar{\ol{\frakq}}
\def\frakp{p}
\def\frakP{{\mathfrak p}}
\def\Af{\A_f}
\newcommand\localW[1]{W_{#1}}
\def\addchar{\psi}
\def\Prd{\boldsymbol{P}}
\def\cmJ{J}
\def\OKbasis{\bftheta}
\def\frakPbar{\ol{\frakP}}
\def\Eichler{R}
\def\hatR{\wh R}
\newcommand\localP[2]{\Prd(#1,#2)}
\newcommand\Lpair[2]{\pair{#1}{#2}_k}
\newcommand\PetG[2]{\pair{#1}{#2}_{\GL_2}}
\newcommand\PetB[2]{\pair{#1}{#2}_{\Eichler}}
\def\lnew{\varphi_\pme}
\def\OFp{\Zp}
\def\wt{k}
\def\bfv{{\mathbf v}}
\def\p{p}
\def\vform{f}
\def\vformB{f_{\pi'}}
\def\Form{\varphi_{\pi'}}
\def\ALI{\tau^{N_B}}
\def\ALIG{\boldsymbol{\tau}^{N}}
\def\Qq{\Q_q}
\def\newForm{\varphi_\pi}
\def\newW{W}
\def\MF{\bfM}
\def\padicMF{\cM}
\def\#{\sharp}
\def\GrossC{\cX}
\def\CMspace{\mathrm{CM}}
\def\Red{\mathrm{Red}}
\def\hatB{\wh B^\x}
\def\hatK{\wh K^\x}
\def\hatQ{\wh \Q^\x}
\def\Sgsc{Q_2}
\def\Adjrep{W_\rho}
\def\bfal{A}
\def\Diff{\delta}
\thanks{The first author is partly supported by
Grant-in-Aid for Young Scientists (B) No. 23740015  and Hakubi project
of Kyoto University. The second author is partly supported by National Science Council grant 101-2115-M-002-010-MY2}
\title[Special values of anticyclotomic L-functions]{Special values of anticyclotomic L-functions for modular forms} 
\author[M. Chida]{Masataka Chida}
\address{
Department of Mathematics, Graduate School of Science, Kyoto University,
Kitashirakawa-Oiwakecho, Sakyo-ku, Kyoto,  606-8502, Japan.
}
\address{
The Hakubi Center for Advanced Research, Kyoto University,
Yoshida-Ushinomiya-cho, Sakyo-ku, Kyoto, 606-8302, Japan
}
\email{chida@math.kyoto-u.ac.jp}
\author[M.-L. Hsieh]{Ming-Lun Hsieh}
\address{ Department of Mathematics~\\National Taiwan University ~ \\
No. 1, Sec. 4, Roosevelt Road, Taipei 10617, Taiwan~
}
\email{mlhsieh@math.ntu.edu.tw}
\date{\today}
\subjclass[2010]{11F67 11G15}
\begin{document}
\begin{abstract}In this article, we generalize some works of Bertolini-Darmon and Vatsal on anticyclotomic $L$-functions attached to modular forms of weight two to higher weight case. We construct a class of anticyclotomic \padic $L$-functions for ordinary modular forms and derive the functional equation and the interpolation formula at all critical specializations. Moreover, we prove results on the vanishing of $\mu$-invariant of these \padic $L$-functions and the non-vanishing of central $L$-values with anticyclotomic twists.
\end{abstract}


\maketitle
\tableofcontents

\def\wtsp{\frakX_p^{\mathrm{crit}}}
\def\bfa{\bfc}
\def\thmref#1{Theorem~\parref{#1}}
\def\propref#1{Proposition~\parref{#1}}
\def\corref#1{Corollary~\parref{#1}}     \def\remref#1{Remark~\parref{#1}}
\def\secref#1{\S\parref{#1}}
\def\chref#1{Chapter~\parref{#1}}
\def\lmref#1{Lemma~\parref{#1}}
\def\subsecref#1{\S\parref{#1}}
\def\defref#1{Defefinition~\parref{#1}}
\def\assref#1{Assumption~\parref{#1}}
\def\hypref#1{Hypothesis~\parref{#1}}
\def\subsubsecref#1{\S\parref{#1}}
\def\egref#1{Example~\parref{#1}}
\def\conjref#1{Conjecture~\parref{#1}}
\section*{Introduction}
The purpose of this article is to generalize works of Bertolini, Darmon and Vatsal on anticyclotomic \padic $L$-functions attached to modular forms of weight two to higher weight. We construct anticyclotomic \padic $L$-functions for ordinary modular forms and prove the interpolation property at all critical specializations and the functional equation. In addition, following the ideas of Vatsal, we prove results on the vanishing of the $\mu$-invariant of this anticyclotomic \padic $L$-functions and the non-vanishing modulo $\ell$ of central $L$-values with anticyclotomic twists. To state our results precisely, we introduce some notation. Let $f\in S_k(\Gamma_0(N))$ be an elliptic new form of weight $k$ (even) and conductor $N$. Let $p$ be a rational prime. Let $\cK$ be an imaginary quadratic field of discriminant $-D_\cK<0$.  Then $K$ determines a factorization
\[N=p^{n_p}N^+N^-\quad (p,N^+N^-)=1,\]
where $N^+$ (resp. $N^-$) is only divisible by primes that are split (resp. inert or ramified) in $\cK$. We assume that
\[\text{ $N^-$ is the square-free product of an odd number of primes.}\]
Fix a decomposition $N^+\cO_\cK=\frakN^+\ol{\frakN^+}$. For each finite prime, let $\ep_\pme(f)\in \stt{\pm 1}$ denote the local root number of $f$, so $\ep_\pme(f)=1$ if $\pme\ndivides N$, and $\ep_\pme(f)$ is the eigenvalue of Atkin-Lehner involution at $\pme$ if $\pme|N$. The global root number $\ep(f):=(-1)^\frac{k}{2}\prod_{\pme}\ep_\pme(f)$. Let
\[f(q)=\sum_{n=1}^\infty \bfa_n(f)q^n\] be the $q$-expansion of $f$ at the infinity cusp. It is known that $\ep_\pme(f)=-\pme^\frac{2-k}{2}\bfa_\pme(f)$ if $\pme\parallel N$. Let $\bfal_p$ be a complex root of $X^2-\bfa_p(f)X+p^{k-1}$ if $p\ndivides N$ (or $X^2-\bfa_p(f)X$ if $p|N$). Let $\cK_\infty^-$ be the anticyclotomic $\Zp$-extension over $\cK$ and let $\Gamma^-=\Gal(\cK_\infty^-/\cK)$ be the Galois group. Denote by $\rec_\cK:\AK^\x\to G_\cK^{ab}$ the geometrically normalized reciprocity law map. Fix embeddings $\iota_\infty:\Qbar\hookto\C$ and $\iota_p:\Qbar\hookto\Cp$. To each locally algebraic \padic character $\wh\chi:\Gamma^-\to\Cp^\x$ of ($p$-adic) weight $(m,-m)\in\Z^2$, we can associate a Hecke character $\chi:\AK^\x/\cK^\x\to\C^\x$ of (archimedean) weight $(m,-m)$ defined by
\[\chi(a):=\iota_\infty\iota_p^{-1}(\wh\chi(\rec_\cK(a))(\ol{a}_p/a_p)^{m})(a_\infty/\ol{a}_\infty)^{m},\]
where $a_p\in (\cK\ot_\Q\Qp)^\x$ and $a_\infty\in (\cK\ot_\Q\R)^\x$ are the $p$-component and $\infty$-component of $a$. We call $\wh\chi$ the \emph{\padic avatar} of $\chi$. Let $\wtsp$ be the set of \emph{critical specializations} consisting of locally algebraic \padic characters $\wh\chi:\Gamma^-\to\Cp^\x \text{ of weight }(m,-m)$ with \[-k/2<m<k/2.\]

Let $\cO_{f}$ be the ring of integers of the Hecke field of $f$. Fix a prime $\lam$ of $\Qbar$ and let $\cO_{f,\lam}$ be the completion of $\cO_f$ with respect to $\lam$. Suppose that $\lam$ is induced by $\iota_p$ (so $\lam$ has residue characteristic $p$). Denote by  $L(f/\cK,\chi,s)$ the Rankin $L$-series associated with $f$ and the theta series attached to $\chi$. Our first theorem is the construction of the anticyclotomic \padic $L$-function attached to $f$ over $\cK$ with the explicit evaluation formula at critical specializations.
\begin{thmA}\label{T:A}Suppose that \begin{itemize}\item[(a)] $p>k-2$,
\item[(b)] $\bfal_p$ is a $\lam$-adic unit.
\end{itemize} Then there exist an element $\Theta_\infty\in\cO_{f,\lam}\powerseries{\Gamma^-}$ and a complex number $\Omega_{f,N^-}\in\C^\x$ such that for every $\wh\chi\in \wtsp$ of weight $(m,-m)$ and conductor $p^n$, we have the following interpolation formula:
\[\begin{aligned}\wh\chi(\Theta_\infty^2)=&\Gamma(k/2+m)\Gamma(k/2-m)\cdot \frac{L(f/\cK,\chi,k/2)}{\Omega_{f,N^-}} \cdot e_p(f,\chi)^{2-\Ord_p(N)}\cdot p^n\bfal_p^{-2n}(p^{n}D_\cK)^{k-2}\\
&\times u_\cK^2\sqrt{D_\cK}\cdot \ep_p(f)(-1)^m\prod_{\pme|(D_\cK,N^-)}(1-\ep_\pme(f))\cdot \chi(\frakN^+),\end{aligned}\]
where $u_\cK=\#(\cO_\cK^\x)/2$ and $e_p(f,\chi)$ is the \padic multiplier given by  \[e_p(f,\chi)=\begin{cases}1&\text{if $n>0$,}\\
(1-\chi(\frakP)p^\frac{k-2}{2}\bfal_\frakp^{-1})(1-\chi(\ol{\frakP})p^\frac{k-2}{2}\bfal_\frakp^{-1})&\text{if $n=0$ and $\frakp=\frakP\ol{\frakP}$ is split},\\
1-p^{k-2}\bfal_\frakp^{-2}&\text{if $n=0$ and $\frakp=\frakP$ is inert},\\
1-\chi(\frakP)p^\frac{k-2}{2}\bfal_\frakp^{-1}&\text{if $n=0$ and $\frakp=\frakP^2$ is ramified}.
\end{cases}\]
\end{thmA}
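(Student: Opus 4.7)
The plan is to follow the Bertolini--Darmon--Vatsal construction, extended to weight $k\ge 4$ via Hida theory on the definite quaternion algebra side. By the parity hypothesis on $N^-$, there exists a definite quaternion algebra $B/\Q$ ramified precisely at the primes dividing $N^-\infty$. Fix an optimal embedding $\iota\colon\cK\hookto B$ and an Eichler order $R\subset B$ of level $N^+$ containing $\iota(\cO_\cK)$. Via the Jacquet--Langlands correspondence, lift $f$ to a quaternionic eigenform $\Form$ on $B^\x\bksl \wh B^\x$ taking values in the algebraic representation $L_{k-2}$ of $B$ of highest weight $k-2$; hypothesis (a) $p>k-2$ ensures that $L_{k-2}$ admits an $\cO_{f,\lam}$-integral lattice on which $\iota(\cO_\cK)$ acts semisimply, and in particular a canonical CM test vector $v_\cK\in L_{k-2}$ fixed by $\iota(\cK^\x)$.

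For each $n\ge 0$, let $R_n\subset R$ be the Eichler order of level $N^+p^n$ with the Iwahori structure at $p$ adapted to $\iota$, and let $P_n\in X_n := B^\x\bksl \wh B^\x/\wh{R}_n^\x$ be the CM point of conductor $p^n$ attached to $\iota$. Shimura's reciprocity law identifies the Galois orbit of $P_n$ under $\Gal(\cK_n^-/\cK)$, where $\cK_n^-$ is the $n$-th layer of $\cK_\infty^-/\cK$. Define the theta element
\[
\Theta_n \;:=\; \bfal_p^{-n}\sum_{\sigma\in\Gal(\cK_n^-/\cK)} \pair{\Form(P_n^\sigma)}{v_\cK}\cdot\sigma^{-1}\;\in\;\cO_{f,\lam}\bra{\Gal(\cK_n^-/\cK)},
\]
where $\pair{\cdot}{\cdot}$ denotes the bilinear pairing on $L_{k-2}$ dual to $v_\cK$. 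The normalization by $\bfal_p^{-n}$, valid by hypothesis (b), is precisely what is needed to make $\{\Theta_n\}_n$ compatible under the natural transition maps among the group rings. Passing to the inverse limit produces $\Theta_\infty\in\cO_{f,\lam}\powerseries{\Gamma^-}$, integrality again resting on $p>k-2$ together with ordinariness.

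The core of the proof is the interpolation formula. For $\wh\chi\in\wtsp$ of weight $(m,-m)$ and conductor $p^n$,
\[
\wh\chi(\Theta_\infty)^2 \;=\; \bfal_p^{-2n}\Big(\sum_{\sigma\in\Gal(\cK_n^-/\cK)}\wh\chi(\sigma)\pair{\Form(P_n^\sigma)}{v_\cK}\Big)^{\!2}.
\]
I plan to identify this with the right-hand side via the higher-weight generalization of Gross's toric period formula, an instance of Waldspurger's explicit formula for the central value $L(f/\cK,\chi,k/2)$. The formula expresses the square of the toric period as $L(f/\cK,\chi,k/2)/\Omega_{f,N^-}$ multiplied by a product of local orbital integrals, to be computed individually: the archimedean integral against the discrete series of weight $k$ and character of infinity type $(m,-m)$ yields the Gamma factors $\Gamma(k/2+m)\Gamma(k/2-m)$ together with the sign $(-1)^m$ and the $D_\cK$-powers; the $p$-local integral produces the Euler factor $e_p(f,\chi)^{2-\Ord_p(N)}$ (extracted from the relation between $\Form$ and its $p$-stabilization) along with the required power of $p$ and of $\bfal_p$; the local integrals at inert or ramified primes $\pme\mid(D_\cK,N^-)$ give $1-\ep_\pme(f)$ by a direct test-vector computation on the ramified quaternion; and the split-$N^+$ integral produces $\chi(\frakN^+)$. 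The remaining constants $u_\cK^2\sqrt{D_\cK}$ and $\ep_p(f)$ arise from Tamagawa measures and the local root number at $p$.

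The main obstacle is the explicit evaluation of the local Waldspurger integrals for $k\ge 4$ and arbitrary locally algebraic $\wh\chi$: the archimedean integral requires a matrix-coefficient computation of the discrete series paired with the CM vector, and the $p$-local integral at $n=0$ must separately handle the split, inert, and ramified subcases to produce the three forms of $e_p(f,\chi)$. A secondary but equally technical obstacle is the choice of the canonical period $\Omega_{f,N^-}$: it must be defined so that $\Theta_\infty$ is integral in $\cO_{f,\lam}\powerseries{\Gamma^-}$ and simultaneously yields the clean interpolation formula above. This is to be achieved by taking $\Omega_{f,N^-}$ to be an appropriately normalized quaternionic Petersson inner product of $\Form$ with itself, whose $p$-integrality and primitivity are ensured by hypotheses (a) and (b).
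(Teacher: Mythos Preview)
Your overall architecture matches the paper's---Jacquet--Langlands transfer to the definite quaternion algebra, theta elements from Gross points, and an explicit Waldspurger formula for the evaluation---but there is a genuine gap in how you pass from $\wh\chi(\Theta_\infty)$ to a toric period when $\wh\chi$ has nonzero algebraic weight $m$. Your $\Theta_\infty$ is built from a single $\iota(\cK^\times)$-fixed vector $v_\cK$, which is (up to scalar) the weight-zero eigenvector $\mathbf v_0\in L_{k}(\C)$. For $\wh\chi$ of weight $(m,-m)$ with $m\neq 0$, two things go wrong. First, $\wh\chi$ is of infinite order and does not factor through any finite quotient $\Gal(\cK_n^-/\cK)$, so your displayed finite-sum identity for $\wh\chi(\Theta_\infty)$ is not literally valid. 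Second, and more seriously, the complex automorphic form $\varphi^{[0]}$ attached to $\mathbf v_0$ is $\cK^\times_\infty$-invariant, whereas $\chi_\infty(z)=(z/\bar z)^m$, so the global toric period $\int\varphi^{[0]}(t)\chi(t)\,dt$ vanishes by archimedean weight mismatch. Waldspurger applied to $(\varphi^{[0]},\chi)$ therefore yields $0$, not the $L$-value, and your proposed archimedean local computation cannot produce the Gamma factors $\Gamma(k/2+m)\Gamma(k/2-m)$ from this pairing.

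The paper's remedy is to introduce a whole family of theta elements $\Theta_n^{[m]}$, one for each eigenvector $\mathbf v_m$ on which $t\in\cK^\times$ acts by $(\bar t/t)^m$, and to apply Waldspurger to $(\varphi^{[m]},\chi)$, which now has matching archimedean weights and gives the correct $L$-value (\propref{P:evaluation.W}). The key additional step---and this is where hypotheses (a) and (b) are actually used---is a $p$-adic congruence: an elementary matrix computation at $p$ (\lmref{L:2.W}) shows that $p^{n(k-2)/2}$ times the value of $\varphi^{[m]}$ at a level-$p^n$ Gross point is, modulo $p^n$, independent of $m$. Under ordinariness this yields $\Theta_n^{[m]}\equiv\Theta_n^{[0]}\pmod{p^n}$ (\corref{C:congruence.W}), and passing to the limit $n\to\infty$ transfers the interpolation formula for each $\Theta^{[m]}$ to the single element $\Theta_\infty:=\Theta_\infty^{[0]}$. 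No Hida theory is needed; the congruence is direct. Your proposal is missing precisely this bridge between the fixed test vector and the full range of critical weights.
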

\begin{remark}\begin{mylist}
\item The existence of $\bfal_p$ satisfying the assumption (b) is usually referred to the $p$-ordinary hypothesis for $f$, \ie the $p$-th Fourier coefficient $\bfa_p(f)$ is a $\lam$-adic unit.
\item The complex number $\Omega_{f,N^-}$ is given by
\[\Omega_{f,N^-}=\frac{(4\pi)^k\norm{f}_{\Gamma_0(N)}}{\xi_{f}(N^+,N^-)},\]
where $\norm{f}_{\Gamma_0(N)}$ is the Petersson norm of $f$ and $\xi_{f}(N^+,N^-)\in\cO_{f,\lam}$ is an integer connected with certain congruence number of $f$. The precise definition is given in \eqref{E:periodV.W}. It is interesting and important to make a comparison between $\Omega_{f,N^-}$ and Hida's canonical period $\Omega_f$ attached to $f$. In general, we have $\Omega_{f,N^-}/\Omega_f\in\cO_{f,\lam}$. If $k=2$, then under a mild hypothesis, Pollack and Weston \cite{Pollack_Weston:AMU} have shown that this ratio is a product of local Tamagawa numbers at primes dividing $N^-$ modulo a unit in $\cO_{f,\lam}$. We will investigate this subtle problem in \secref{S:periods} for general weight $k$.
\item \thmref{T:A} indeed gives the construction of the anticyclotomic \padic $L$-function that interpolates \emph{square root} of central $L$-values. In the case $k=2$, $\Theta_\infty$ is precisely the theta element $\theta_\infty$ (with trivial tame branch character) given by Bertolini and Darmon \cite[p.436]{BD:Heegner_Mumford}. Therefore, combined with the anticyclotomic Iwasawa main conjecture for elliptic curves \cite{Bertolini_Darmon:IMC_anti}, the usual control theorem and the comparison between periods \cite[Proposition\,3.7,\,Theorem\,6.8]{Pollack_Weston:AMU}, the evaluation formula of $\Theta_\infty$ at the trivial character yields the optimal upper bound of the size of $p$-primary Selmer groups of certain elliptic curves over $\cK$ in terms of central $L$-values as predicted by Birch and Swinnerton-Dyer conjecture.

\end{mylist}
\end{remark}
Let $*:\cO_{f,\lam}\powerseries{\Gamma^-}\to\cO_{f,\lam}\powerseries{\Gamma^-}$ be the involution defined by $\sg\mapsto \sg^{-1}$. Then $\Theta_\infty$ satisfies the following functional equation.
\begin{thmA}\label{T:intro_functional}
Let $\sg_{\frakN^+}$ be the image of $\frakN^+$ in $\Gamma^-$ under the reciprocity law map $\rec_\cK$. 
Then we have the functional equation:
\[\Theta_\infty^*=\ep_p(f)\ep(f)\cdot \Theta_\infty\cdot \sg_{\frakN^+}^{-1}.\]
\end{thmA}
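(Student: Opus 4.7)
The plan is to descend to the finite-level theta elements from which $\Theta_\infty$ is assembled. Write $\Theta_\infty$ as the inverse limit of theta elements $\theta_n$ in the group ring of the conductor-$p^n$ finite quotient of $\Gamma^-$; each $\theta_n$ is the standard sum $\sum_{[a]} \vformB(x_{[a]})\cdot [a]$ indexed by the CM points $x_{[a]}$ of conductor $p^n$ on the definite Shimura set attached to the quaternion algebra $B/\Q$ of discriminant $N^-$, with coefficients provided by the Jacquet--Langlands transfer $\vformB$ of $f$. Since both the involution $*$ and right-translation by $\sg_{\frakN^+}^{-1}$ are continuous on the Iwasawa algebra, it suffices to establish the identity
\[\theta_n^* \;=\; \ep_p(f)\,\ep(f)\cdot \theta_n \cdot \sg_{\frakN^+}^{-1}\]
at every finite level $n$ and pass to the inverse limit.

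The first step is geometric. Complex conjugation $c \in \Gal(\cK/\Q)$ acts on $\Gamma^-$ by inversion, so permuting the CM-point index by $c$ realizes the group-ring involution $\sg\mapsto\sg^{-1}$, but only \emph{up to a fixed translation}. That translation is dictated by the rigidification of the $\Gamma_0(N^+)$-level structure via the choice of $\frakN^+$: because $c$ swaps $\frakN^+$ with $\ol{\frakN^+}$, restoring the original level datum requires composition with multiplication by the class of $\frakN^+$, producing exactly the factor $\sg_{\frakN^+}^{-1}$.

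The second step is spectral: the scalar by which $c$ acts on $\vformB$. By strong multiplicity one on $B^\x$ together with local uniqueness of new vectors, this scalar factors as a product of local root numbers. Places away from $Np$ contribute trivially; each $\pme\mid N^-$ contributes $\ep_\pme(f)$ via its quaternionic (Steinberg-type) component; each $\pme\mid N^+$ contributes $\ep_\pme(f)$ via the split-local analysis; the ordinary $p$-stabilization (available because $\bfal_p$ is a $\lam$-adic unit) contributes $\ep_p(f)$; and the archimedean place contributes $(-1)^{k/2}$. Their product equals $\ep_p(f)\,\ep(f)$ by the defining formula $\ep(f) = (-1)^{k/2}\prod_\pme \ep_\pme(f)$.

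The main obstacle is orienting the $\frakN^+$-shift correctly: one must verify that the direction is $\sg_{\frakN^+}^{-1}$ and not its inverse, by tracking how the two $\cK$-embeddings into $B$ interchange under $c$ together with the chosen rigidification. A useful consistency check is to square the proposed identity and compare it with \thmref{T:A} applied to a pair $(\wh\chi,\wh\chi^{-1})$ with $\wh\chi\in\wtsp$: one uses $L(f/\cK,\chi,k/2) = L(f/\cK,\chi^{-1},k/2)$ for anticyclotomic $\chi$, the identity $e_p(f,\chi) = e_p(f,\chi^{-1})$ (which follows from $\chi(\frakP)\chi(\ol{\frakP}) = 1$ in the split case and $\chi(\frakP)^2 = 1$ in the ramified case, both consequences of anticyclotomicity), and the manifest symmetry of the remaining factors under $m \leftrightarrow -m$. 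This confirms the squared identity; the actual sign is pinned down by the geometric construction itself, not by the $L$-value interpolation.
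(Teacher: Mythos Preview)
Your overall strategy—reduce to finite level and realize the involution via complex conjugation on CM points—is exactly the paper's approach. The paper makes this concrete through the fixed element $\cmJ\in B^\x$ with $\cmJ t=\bar t\cmJ$: automorphy of $\Form=\Form^{[0]}$ gives $\Form(x_n(a^{-1}))=\Form(x_n(a)\cmJ_{\cmpt,n})$ where $\cmJ_{\cmpt,n}=(\cmpt^\setn)^{-1}\cmJ\cmpt^\setn$, and one then computes the right translation by $\cmJ_{\cmpt,n}$ place by place (\lmref{L:10.W} handles $\pme\mid N^+$ and simultaneously produces the shift by $\sg_{\frakN^+}$).

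Your local tally in Step~2 is wrong at $p$, and this propagates to an arithmetic error in your conclusion. One checks (this is noted in the proof of \propref{P:5.W}) that for $n\ge 1$ the $p$-component of $\cmJ_{\cmpt,n}$ lies in the Iwahori subgroup $\cI_p$, which fixes the $p$-stabilized local vector; hence the $p$-place contributes $1$, not $\ep_p(f)$. Indeed, your own listed contributions multiply to $(-1)^{k/2}\ep_p(f)\prod_{\pme\mid N^+N^-}\ep_\pme(f)=\ep(f)$, not $\ep_p(f)\ep(f)$; the two disagree exactly when $p\mid N$ with $\ep_p(f)=-1$, so your final sentence does not follow from what precedes it. With the correct trivial contribution at $p$ the product becomes $(-1)^{k/2}\prod_{\pme\mid N^+N^-}\ep_\pme(f)=\ep_p(f)\ep(f)$, using $\ep_p(f)^2=1$. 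You have also not actually computed the contributions at $\pme\mid N^-$ (where $\pi'_\pme$ is one-dimensional, so there is no $\GL_2$-side Atkin--Lehner to invoke) or at $\infty$; both require a short direct calculation with $\cmJ$ rather than an appeal to ``local root numbers''.
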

In the case $k=2$ and $(D_\cK,N^-)=1$, this theorem is proved in \cite[Proposition\,2.13]{BD:Heegner_Mumford} up to an element in $\Gamma^-$. The above functional equation suggests that the product $\abs{\Theta_\infty}^2:=\Theta_\infty\cdot \Theta_\infty^*$ be intrinsic. Namely, $\abs{\Theta_\infty}^2$ does not depend on the choice of an auxiliary decomposition $N^+=\frakN^+\ol{\frakN^+}$. Combined with \thmref{T:A}, this verifies the formula of $\chi(\abs{\Theta_\infty}^2)$ in \cite[Conjecture 2.12]{BD:Heegner_Mumford}. If $\chi$ is ramified character of finite order and $p\ndivides D_\cK N$, then the formula of $\chi(\abs{\Theta_\infty}^2)$ actually is a consequence of Gross special
value formulae first obtained by B. Gross in a special case $k=2$ and $N$ is a prime and generalized by Shou-Wu Zhang \cite[Theorem\,7.1]{Zhang:GZII} for $k=2$ and Haiping Yuan \cite{YuanHP:Thesis} for $k>2$.

In \cite[Theorem\,1.1]{Vatsal:nonvanishing}, Vatsal determines the $\mu$-invariant of anticyclotomic \padic $L$-functions for modular forms of weight two. Our second theorem provides a partial generalization of his result to modular forms of higher weight.
\begin{thmA}\label{T:B}Let $\rho_{f,\lam}:\Gal(\Qbar/\Q)\to\GL_2(\cO_{f,\lam})$ be the Galois representation associated to $f$. With the assumptions in \thmref{T:A}, suppose further
\begin{mylist}
\item $(D_\cK,N^-)=1$,
\item the residual representation $\ol{\rho}_{f,\lam}$ is absolutely irreducible.\end{mylist} Then the Iwasawa $\mu$-invariant of $\Theta_\infty$ vanishes.
\end{thmA}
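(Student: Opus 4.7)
My plan is to prove that the reduction $\bar{\Theta}_\infty$ of $\Theta_\infty$ modulo the maximal ideal $\lam$ of $\cO_{f,\lam}$ is a non-zero element of $\F_\lam\powerseries{\Gamma^-}$; since $\Gamma^-\iso\Zp$, this is equivalent to $\mu(\Theta_\infty)=0$ by the structure theorem for finitely generated modules over $\cO_{f,\lam}\powerseries{\Gamma^-}$. Writing $\Theta_\infty=\varprojlim_n \Theta_n$ with $\Theta_n\in\cO_{f,\lam}[G_n]$ for $G_n=\Gal(\cK_n/\cK)$ the Galois group of the ring class field of $\cK$ of $p^n$-conductor, it will be enough to exhibit a single $n$ and a single element $[a]\in G_n$ such that the $[a]$-coefficient of $\bar\Theta_n$ is non-zero in $\F_\lam$.

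I would then invoke the construction underlying \thmref{T:A}, writing the theta element as
\[
\Theta_n=\sum_{[a]\in G_n}\wh\Phi(x_n^{[a]})\cdot[a],
\]
where $\wh\Phi$ is the quaternionic form corresponding to $f$ under the Jacquet-Langlands correspondence on the definite quaternion algebra $B/\Q$ of discriminant $N^-\infty$ (a $\mathrm{Sym}^{k-2}$-valued function on the finite Shimura set $X=B^\x\bksl\wh B^\x/\wh R^\x$ associated to an Eichler order $R$ of level $N^+$), and $\{x_n^{[a]}\}_{[a]\in G_n}$ is a compatible system of Gross points of conductor $p^n$. The absolute irreducibility of $\ol\rho_{f,\lam}$, combined with Jacquet-Langlands and a standard Hida-type multiplicity-one argument on mod-$\lam$ Hecke algebras, forces the reduction $\ol{\wh\Phi}$ on $X$ to be non-zero: otherwise $\ol\rho_{f,\lam}$ would be realized on a Hecke eigenspace at a proper sub-level, contradicting absolute irreducibility.

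The proof thus reduces to showing that for $n$ large enough, the Galois orbit $\{x_n^{[a]}\}_{[a]\in G_n}$ meets the support of $\ol{\wh\Phi}$ in $X$. This is the equidistribution theorem for Gross points of $p$-power conductor, first established in weight two by Vatsal \cite{Vatsal:nonvanishing} via an ergodic argument on the reduction of CM points modulo a supersingular prime; in the higher-weight setting this is the main technical obstacle, but the argument should carry over because the underlying Gross points $x_n^{[a]}\in X$ are weight-independent, with the coefficient system $\mathrm{Sym}^{k-2}$ entering only \emph{a posteriori} through the evaluation of $\wh\Phi$. The assumption $(D_\cK,N^-)=1$ is used to rule out forced local vanishing of $\wh\Phi$ on entire sub-orbits coming from local dichotomy at ramified primes. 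Once the equidistribution is in place, some coefficient of $\bar\Theta_n$ must be non-zero for $n$ sufficiently large, and the vanishing of the $\mu$-invariant of $\Theta_\infty$ follows.
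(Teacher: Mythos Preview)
Your overall strategy---reduce to showing a coefficient of $\Theta_n$ is a unit for some large $n$, then invoke equidistribution of Gross points---matches the paper's, but there are three genuine gaps that the paper has to work to fill and that your sketch glosses over.

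First, the phrase ``the argument should carry over because the underlying Gross points are weight-independent'' hides the main new idea in higher weight. The coefficients of $\Theta_n$ are not values of the $L_k$-valued form $\wh\Phi$ itself, but rather $\al_p^{-n}\langle \bfv_0^*,\rho_{k,p}(\cdot)\wh\Phi(x_n(a))\rangle_k$. The paper shows (\lmref{L:2.W}) that under the ordinary hypothesis this quantity is congruent modulo $\lam$ to the value at $x_n(a)$ of a genuine \emph{weight-two} $\F_\lam$-valued form $\bff_p(g)=\sqrtb^{(2-k)/2}\langle X^{k-2},\pvformB(g)\rangle_k$. It is only after this reduction that Vatsal's equidistribution machinery applies. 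Without this congruence you are not actually working with a function on the Shimura set, and ``meets the support'' has no clear meaning.

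Second, $\Theta_\infty$ lives in $\cO\powerseries{\Gamma^-}$, not $\cO\powerseries{\cG_\infty}$: the coefficient of $[u]_n\in\Gamma_n^-$ is a \emph{sum over the torsion group $\Delta$} of values of the form at Gross points. So ``the orbit meets the support of $\ol{\wh\Phi}$'' only shows a coefficient of the element in $\cO[\cG_n]$ is non-zero; it does not rule out cancellation in the $\Delta$-average. The paper handles this by showing the relevant weight-two form $\bff_{\cD_0}^\dagger$ is \emph{not Eisenstein} (i.e.\ not constant along reduced-norm fibres) and then applying the simultaneous equidistribution result of Cornut--Vatsal (\propref{P:Vatsal_Cornut.W}, \corref{C:Vatsal_Cornut}) to the family of translates indexed by $\cD_1$.

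Third, you omit the $p$-stabilization entirely. The theta element is built from $\vformB^\dagger$, not $\vformB$, so after reducing to weight two one must pass from $\bff_p$ to $\bff_{\cD_0}^\dagger=(1-p^{\frac{k-2}{2}}A_p^{-1}\varrho(\DII{p^{-1}}{1}))\bff_{\cD_0}$. That this operation preserves ``not Eisenstein'' is nontrivial and is exactly the content of the Ihara-type \lmref{L:Ihara2.W}. Your sketch has no analogue of this step.
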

\newtheorem{I_Remark*}{Remark}
\begin{I_Remark*}\thmref{T:B} has an important application to Iwasawa main conjecture for $\GL(2)$. Vatsal's theorem on the vanishing of $\mu$-invariant plays a key role in the proof of Iwasawa main conjecture for elliptic curves in the recent work of Skinner-Urban \cite{SU:IMC_GL(2)}. They actually prove the main conjecture for modular forms on $\Gamma_0(N)$ of weight $k\con 2\pmod{p-1}$ \cite[Theorem\,3.6.4]{SU:IMC_GL(2)}, and \thmref{T:B} enables us to lift their assumption $k\con 2\pmod{p-1}$ .
\end{I_Remark*}

Now we suppose that $\lam$ has residue characteristic $\ell\not =p$ and consider the problem of non-vanishing modulo $\lam$ of central $L$-values with anticyclotomic twists. We obtain the following result, which is a generalization and an improvement of \cite[Theorem\,1.2]{Vatsal:nonvanishing} in the weight two case.
\begin{thmA}\label{T:C}Suppose that $p^2\ndivides N$ and $(D_\cK,N^-)=1$. Let $\ell$ be a rational prime such that
\begin{mylist}
\item $\ell\ndivides pND_\cK$ and $\ell>k-2$,
 \item $\ol{\rho}_{f,\lam}$ is absolutely irreducible.\end{mylist}  Then for all but finitely many characters $\chi:\Gamma^-\to\mu_{p^\infty}$, we have
\[\frac{L(f/\cK,\chi,k/2)}{\Omega_{f,N^-}}\not\con 0\pmod{\lam}.\]
\end{thmA}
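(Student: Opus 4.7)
The plan is to combine the interpolation formula of \thmref{T:A} with an equidistribution argument for Heegner orbits modulo $\ell$, in the spirit of Vatsal. For any character $\chi:\Gamma^-\to\mu_{p^\infty}$ of conductor $p^n$ with $n\geq 1$ (so of weight $m=0$), \thmref{T:A} yields
\[\wh\chi(\Theta_\infty)^2=C(\chi)\cdot\frac{L(f/\cK,\chi,k/2)}{\Omega_{f,N^-}},\]
where $C(\chi)$ collects the explicit local and archimedean factors in the interpolation formula. I would first verify that under the hypotheses of \thmref{T:C} every constituent of $C(\chi)$ is a $\lam$-adic unit: the Euler factor $e_p(f,\chi)$ equals $1$ because $n\geq 1$; the integers $p^n$, $D_\cK$ and $\bfal_p$ (which satisfies $X^2-\bfa_p(f)X+p^{k-1}=0$ and hence has norm dividing $p^{k-1}$) are $\lam$-units since $\ell\nmid pD_\cK$; the Gamma factor $\Gamma(k/2)^2=((k/2-1)!)^2$ is a $\lam$-unit because $\ell>k-2$; the product $\prod_{\pme\mid(D_\cK,N^-)}(1-\ep_\pme(f))$ is empty by the coprimality hypothesis; and $\chi(\frakN^+)$ is a $p$-power root of unity, a $\lam$-unit since $\ell\neq p$. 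Since $\lam$ has residue characteristic $\ell\neq p$ the ordinarity condition on $\bfal_p$ is not needed here; one works with the unnormalized interpolation identity underlying \thmref{T:A}. Consequently, \thmref{T:C} reduces to showing that $\wh\chi(\Theta_\infty)\not\equiv 0\pmod\lam$ for all but finitely many such $\chi$.

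Next I would unwind the Gross-type formula underlying \thmref{T:A} to express the theta element in terms of a quaternionic modular form. For $\chi$ of conductor $p^n$, up to a $\lam$-unit normalization,
\[\wh\chi(\Theta_\infty)\equiv\sum_{\sigma\in G_n}\varphi(x_n\cdot\sigma)\,\chi(\sigma)\pmod\lam,\]
where $\varphi$ is the Jacquet--Langlands transfer of $f$ to the definite quaternion algebra of discriminant $N^-\infty$ (valued in $\Sym^{k-2}$ of the standard representation), $x_n$ is a CM point attached to the order of conductor $p^n$ in $\cO_\cK$, and $G_n=\Gal(\cK[p^n]/\cK)$ acts on $x_n$ by translation. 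The absolute irreducibility of $\ol{\rho}_{f,\lam}$, together with $\ell\nmid ND_\cK$, rules out mod-$\lam$ Eisenstein congruences between $f$ and the trivial Hecke eigensystem, thereby forcing $\bar\varphi$ not only to be nonzero but also to be \emph{non-constant} on the finite set of CM points modulo a prime above $\lam$.

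It remains to show that these twisted sums are nonzero modulo $\lam$ for all but finitely many $\chi$. Here I would invoke the Cornut--Vatsal equidistribution theorem for Heegner points, applied to the reduction modulo a prime above $\ell$: as $n\to\infty$, the Galois orbit $\{x_n\sigma\}_{\sigma\in G_n}$ becomes equidistributed in the finite set of CM points of level $N^+N^-$, with an effective rate coming from a Chebotarev density argument for the image of $\Gamma^-$ in the permutation group of that set (the surjectivity of this image is where $\ell\nmid pND_\cK$ is used to ensure smooth integral models and free $\Gamma^-$-action on appropriate fibers). Equidistribution combined with the non-constancy of $\bar\varphi$ then forces the Fourier coefficient $\sum_{\sigma}\bar\varphi(x_n\sigma)\chi(\sigma)$ to be nonzero for all but finitely many $\chi$, exactly as in the weight-two argument of \cite[Theorem~1.2]{Vatsal:nonvanishing}. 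The principal obstacle is to carry out this equidistribution step in higher weight: the quaternionic form $\varphi$ takes values in the non-trivial local system $\Sym^{k-2}$, so one must first descend $\bar\varphi$ to an $\bar\F_\ell$-valued function whose non-constancy can still be controlled by the absolute irreducibility of $\ol{\rho}_{f,\lam}$; the hypothesis $p^2\nmid N$ keeps the local behaviour of $\varphi$ at $p$ transparent (spherical or Steinberg), while $\ell>k-2$ secures the $\lam$-integrality of the Eichler--Shimura comparison identifying $\bar\varphi$ with the restriction of $\ol{\rho}_{f,\lam}$.
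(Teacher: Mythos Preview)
Your reduction to the nonvanishing of $\wh\chi(\Theta_\infty)$ modulo $\lambda$ is correct, and your verification that every factor in $C(\chi)$ is a $\lambda$-adic unit is fine. The passage from the vector-valued form to a scalar function and the use of absolute irreducibility to rule out Eisenstein congruences are also the right moves; this is essentially how the paper handles the higher-weight descent.

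The gap is in your final step. You assert that ``equidistribution combined with the non-constancy of $\bar\varphi$ then forces the Fourier coefficient $\sum_{\sigma}\bar\varphi(x_n\sigma)\chi(\sigma)$ to be nonzero for all but finitely many $\chi$,'' but this does not follow. Cornut--Vatsal equidistribution (\propref{P:Vatsal_Cornut.W}) says that for $n$ large the $\wh\cO_\cK^\times$-orbit of the Gross point surjects onto the relevant fiber of $\cX(\cD_1,\cU)$; combined with non-Eisenstein-ness this shows that \emph{some coefficient} of the theta element is nonzero mod $\lambda$, which is precisely the $\mu=0$ argument for \thmref{T:B}. But here you need the full character sum over $G_n$ to be nonzero, and both $\#G_n$ and the order of $\chi$ grow with $n$. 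Knowing that the values $\{\bar\varphi(x_n\sigma)\}_{\sigma\in G_n}$ are equidistributed among a finite set tells you nothing about whether their Fourier transform at a particular primitive character $\chi$ of $G_n$ vanishes in $\ol{\F}_\ell$. Your appeal to a ``Chebotarev density argument for the image of $\Gamma^-$ in the permutation group'' does not make sense in this setting: $\Gamma^-$ does not act on the Gross curve in a way that would yield such a statement.

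The missing ingredient is the Galois averaging trick of Sinnott \cite{Sinnott:W_Theorem}, which the paper invokes explicitly. One applies $\Tr_{\bfk_\ell(\nu)/\bfk_\ell}$ to $\al_p^n\wh\chi\nu(a^{-1})\cdot\nu(\Theta_n)$. Since $\bfk_\ell\supset\mu_p$, this extension has $p$-power degree and the trace collapses the sum over $G_n$ to a sum over the subgroup $C_n=\{\gamma:\nu(\gamma)\in\bfk_\ell^\times\}$, whose size $p^s$ is \emph{bounded independently of $n$} (here $p^s$ is the order of the $p$-Sylow of $\bfk_\ell^\times$). The result is the evaluation at $x_n(a)$ of a modified scalar form $\wtd F_{\cD_0}$, obtained from the original by a fixed additive twist at $p$ depending only on a primitive $p^s$-th root of unity, summed over the fixed finite set $\cD_1$. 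One then checks $\wtd F_{\cD_0}$ is non-Eisenstein via an Ihara-type lemma (\lmref{L:Ihara2.W}) and applies \corref{C:Vatsal_Cornut} exactly as in the $\mu$-invariant proof to produce an $a$ with nonzero trace, whence $\nu(\Theta_n)\not\equiv 0\pmod\lambda$. Without this collapse from a growing sum to a bounded one, equidistribution cannot be brought to bear.
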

\begin{I_Remark*}\thmref{T:C} has several consequences in number theory and representation theory. In number theory, this theorem removes the assumptions on $p\ndivides D_\cK$ and the $p$-indivisibility of the class number of $\cK$ in \cite[Theorem\,1.4]{Vatsal:Uniform_CM}, and shows the finiteness of the $\ell$-primary Selmer groups of elliptic curves over $\cK_\infty^-$ in virtue of \cite{Longo_Vigni:Selmer_groups}. From representation theoretic point of view, this theorem  provides a simultaneous non-vanishing result of central $L$-values with anticyclotomic twist, and hence has application to the non-vanishing of Bessel models of certain theta lifting on $\GSp(4)$ by \cite[Theorem\,3]{Prasad:Bessel}.   \end{I_Remark*}

The construction of the theta element $\Theta_\infty$ is based on an adelic formulation of the method of Bertolini and Darmon, with which one can borrow tools from representation theory (Such kind of adelic formulation was also used by Van Order \cite{VanOrder} in the case of Hilbert modular forms of parallel weight two).
The interpolation formula is the elaboration of an explicit Waldspurger formula combined with a \padic congruence argument. We briefly describe these ideas in what follows. Let $B$ be the definite quaternion algebra over $\Q$ of the absolute discriminant $N^-$ and let $R$ be an Eichler order of level $N/N^-$. Let $\vp_f:B^\x\bksl B^\x_\A/\wh R^\x\to\Sym^{k-2}(\C^2)$ be a vector-valued automorphic new form on $B$ attached to $f$ via Jacquet-Langlands correspondence. For each positive integer $n$, let $\cO_n=\Z+p^n\cO_\cK$ be the order of $\cK$ of conductor $p^n$ and let $\cG_n=\Gal(H_n/\cK)$ be the Galois group of the ring-class field of $\cK$ of conductor $p^n$. Then the Picard group $\Pic\cO_n$ will be called \emph{Gross points} of level $p^n$, which is a homogeneous space of $\cG_n$. Our hypothesis on $N$ assures that there exists an optimal embedding $\iota_n:\cK\to B$ with respect to $(\cO_n,R)$, which in turn induces a map $\iota_n:\Pic\cO_n\to B^\x\bksl B^\x_\A/\wh R^\x$. Fix a distinguished point $P_n\in\Pic\cO_n$ and let $P_n^\dagger$ be the \emph{regularized} Gross point (See \subsecref{SS:thetalets}). The module $\Sym^{k-2}(\cO_{f,\lam}^2)$ has a natural $\cO_{f,\lam}$-basis $\stt{\bfv_j}$ indexed by integers $-k/2<j<k/2$ (See \eqref{E:basis.W}), and we can write \[\vp_f=\sum_{-k/2<j<k/2}\vp_f^{[j]}\ot\bfv_j.\]
Here $\vp_f^{[j]}:B^\x\bksl B_\A^\x\to\C$ are automorphic forms on $B^\x_\A$. One can take $p$-adically optimal normalization of $\vp_f^{[0]}$ using the integral structure $\Sym^{k-2}(\cO_{f,\lam}^2)$ (see \subsecref{SS:l_adic_modular_forms}). Moreover, under the ordinary assumption, it can be shown that the restriction of the normalized $\vp_f^{[0]}$ to regularized Gross points does take value in $\cO_{f,\lam}$. We can thus define
\[\wtd\Theta_n=\sum_{\sg\in \cG_n}\sg\ot\vp_f^{[0]}(\iota_n(\sg(P_n^\dagger)))\in\cO_{f,\lam}[\cG_n]. \]
Then $\stt{\wtd\Theta_n}_n$ is compatible with respect to the natural quotient $\cG_{n+1}\to \cG_n$. Then we obtain $\wtd{\Theta}_\infty$ by taking the limit $\stt{\wtd\Theta_n}_n\in\cO_{f,\lam}\powerseries{\cG_\infty}$, where $\cG_\infty=\prolim_n \cG_n$. The Galois group $\Gamma^-=\Gal(\cK^-_\infty/\cK)$ is the maximal $\Zp$-free quotient of $\cG_\infty$. The theta element $\Theta_\infty$in \thmref{T:A} is defined to be the projection of $\wtd\Theta_\infty$ obtained by the quotient map $\cG_\infty\to\Gamma^-$.
If $\chi$ is a finite order character of conductor $p^n$, the evaluation of $\wh\chi(\Theta_\infty)^2$ indeed can be translated into an explicit Waldspurger's formula.  Let $\vp_f^\dagger$ be the \emph{$p$-stabilization} of $\vp_f^{[0]}$ with respect to $A_p$. Then $\wh\chi(\Theta_\infty)$ is essentially the global toric period given by
\[P(\vp^\dagger_f,\chi)=\int_{\cK^\x\A_\Q^\x\bksl \AK^\x}\vp_f^\dagger(\iota_n(t))\chi(t)dt.\]
The value $P(\vp_f^\dagger,\chi)^2$ is a product of local toric period integrals by the \emph{fundamental} formula of Waldspurger \cite[Proposition\,7]{Waldsupurger:Central_value}. We make an explicit calculation of these local integrals. The new input is the calculation of the local toric integral of the $p$-stabilized local new vector at $p$. It is no surprise that the \padic multiplier $e_p(f,\chi)$ is contributed by this local integral. Note that Waldspurger's formula only computes $\wh\chi(\Theta_\infty)$ for \emph{finite order} characters $\chi$. We obtain the formula of $\wh\chi(\Theta_\infty)$ for characters $\wh\chi\in\wtsp$ of infinite order by a congruence trick  (\corref{C:congruence.W}).

The proof of \thmref{T:B} is based on the uniform distribution of CM points in the zero dimensional Shimura variety attached to the definite quaternion algebra $B$, which is the idea of Vatsal in his study on the non-vanishing of anticyclotomic central $L$-values of weight two modular forms. In the higher weight situation, the new idea is to use the congruences among modular forms. Roughly speaking, we construct a weight two $\Fpbar$-valued modular form $\bff_p$ such that the evaluations of $\bff_p$ and $\vp^{[0]}_f$ at Gross points are congruent to each other. We thus reduce the problem to $\bff_p$, for which the approach of Vatsal can be applied. Since the form $\bff_p$ is not a new form in general, we have to use a stronger uniform distribution result \cite[Proposition\,2.10]{Vatsal_Cornut:Documenta} and slightly generalized Ihara's lemma (\lmref{L:Ihara2.W}). The proof of \thmref{T:C} is based on the same idea combined with a Galois average
trick.

This paper is organized as follows. After fixing basic notation and definitions in \secref{S:notation}, we give a brief review of modular forms on definite quaternion algebras and an adelic description of Gross points in \secref{S:Gross_modular_forms}.
In \secref{S:specialvalue}, we give the explicit calculation of the toric periods of $p$-stabilized modular forms based on Waldspurger's formula (\propref{P:Waldformula.W}). The calculation of the local toric integral at $p$ is carried out in \propref{P:5.W}, and the final formula is summarized in \thmref{T:central.W}. In \secref{S:ThetaElment}, we give the construction of theta elements (\defref{D:Theta.W}). The functional equation is proved in \thmref{T:functionaleq}, and the evaluation formula \thmref{T:Thetaevaluation.W} (\thmref{T:A}) is obtained by combining \propref{P:evaluation.W} and the congruence property \corref{C:congruence.W} among theta elements. In \secref{S:NV}, after preparing a key result of Vatsal-Cornut on the uniform distribution of CM points and Ihara's Lemma, we prove \thmref{T:B} (\thmref{T:1.W}) and \thmref{T:C} (\thmref{T:2.W}). Finally, in \secref{S:periods} we give a sufficient condition (\propref{P:congruence}) under which the complex number $\Omega_{f,N^-}$ equals Hida's canonical periods $\Omega_f$ up to a unit in $\cO_{f,\lam}$, applying techniques of Wiles, Taylor-Wiles and Diamond in their proofs of modularity lifting theorems.
\begin{thank}This project was initiated when the first author visited Taida Institute of Mathematical Science. He would like to thank for their hospitality.
\end{thank} 

\def\Iwahori{\cI_p}
\def\sqrtb{\sqrt{\beta}}
\def\mForm{\Form^{[m]}}
\section{Notation and definitions}\label{S:notation}

\subsection{}
If $L$ is a number field, $\cO_L$ is the ring of integers of $L$, $\A_L$ is the adele of $L$ and $\A_{L,f}$
is the finite part of $\A_L$. Let $\A=\A_\Q$.
Let $\addchar=\prod \addchar_\pme$ be the standard additive character
of $\A/\Q$ such that $\addchar(x_\infty)=\exp(2\pii x_\infty),\,x_\infty\in\R$. 

We fix once and for all an embedding
$\iota_\infty:\Qbar\hookto\C$ and an isomorphism
$\iota:\C\iso\C_\ell$ for each rational prime $\ell$, where $\C_\ell$ is the completion of an
algebraic closure of $\Q_\ell$. Let
$\iota_\ell=\iota\iota_\infty:\Qbar\hookto\C_\ell$ be their composition. Let $\Ord_\ell:\C_\ell\to\Q\cup\stt{\infty}$ be the $\ell$-adic valuation on $\C_\ell$ normalized so that $\Ord_\ell(\ell)=1$. We regard $L$ as a subfield in $\C$ (resp. $\C_\ell$) via
$\iota_\infty$ (resp. $\iota_\ell$) and $\Hom(L,\Qbar)=\Hom(L,\C_\ell)$.

Let $\Zbar$ be the ring of algebraic integers of $\Qbar$ and let $\Zbar_\ell$ be the $\ell$-adic completion of $\Zbar$ in $\C_\ell$. Denote by $\wh\Z$ the finite completion of $\Z$. For an abelian group $M$, let $\wh M:=M\ot_\Z\wh\Z$.

\subsection{Measures on local fields}We fix some general notation and conventions on local fields used in \secref{S:specialvalue}. Let $q$ be a place of $\Q$ and $\Abs_{\Q_q}$ be the standard absolute value on $\Q_q$. Let $F$ be a finite extension of $\Q_q$.
If $F$ is non-archimedean, we usually denote by $\uf_F$ a uniformizer of $F$. Denote by $\cO_F$ the ring of integers of $F$. Let $D_F$ be the discriminant of $F/\Q_q$. Let $\Abs_F$ be the absolute value of $F$ normalized by $\abs{x}_F=\abs{\rmN_{F/\Q_q}(x)}_{\Q_q}$. We often simply write $\abs{x}=\abs{x}_F$ for $x\in F$ if its meaning is clear from the context without possible confusion. Let $\addchar:\A/\Q\to \C^\x$ be the additive character such that $\addchar(x_\infty)=\exp(2\pii x)$.
Let $\addchar_q$ be the local component of $\addchar$ at $q$ and let $\addchar_F:=\addchar_q\circ\rmT_{F/\Q_q}$, where $\rmT_{F/\Q_q}$ is the trace from $F$ to $\Q_q$.

Let $dx$ be the Haar measure on $F$ self-dual with respect to the pairing $(x,x')\mapsto \addchar_F(xx')$. If $F$ is non-archimedean, then $\vol(\cO_F,dx)=\abs{D_F}_{\Q_q}^\onehalf$. We recall the definition of the local zeta function $\zeta_F(s)$. If $F$ is non-archimedean, then \[\zeta_F(s)=\frac{1}{1-\abs{\uf_F}_F^s}.\] If $F$ is archimedean, then \[\zeta_\R(s)=\Gamma_\R(s):=\pi^{-s/2}\Gamma(s/2);\,\zeta_\C(s)=\Gamma_\C(s):=2(2\pi)^{-s}\Gamma(s).\]
The Haar measure $\dx x$ on $F^\x$ is normalized by
\[\dx x=\zeta_{F}(1)\abs{x}_F^{-1}dx.\]
In particular, if $F=\R$, then $dx$ is the Lebesgue measure and $\dx x=\abs{x}_\R^{-1}dx$, and if $F=\C$, then $dx$ is twice the Lebesgue measure on $\C$ and $\dx x=2\pi^{-1}r^{-1}drd\theta\,\,(x=re^{i\theta})$.


\subsection{$L$-functions}
Let $F$ be a non-archimedean local field. Let $\pi$ be an irreducible admissible representation of $\GL_2(F)$. Let $L(s,\pi)$ and $\ep(s,\pi,\psi_F)$ be the associated local $L$-function and local epsilon factor respectively (\cite[Theorem\,2.18 (iv)]{Jacquet_Langlands:GLtwo}).

Let $E$ be a quadratic extension of $F$. We write $\pi_E$ for the base change of $\pi$. Let $\mu,\nu:F^\x\to\C^\x$ be two characters of $F^\x$. Suppose that either $\pi=\pi(\mu,\nu)$ is a principal series if $\mu\nu^{-1}\not =\Abs^{\pm 1}$ or $\pi=\sg(\mu,\nu)$ is a special representation if $\mu\nu^{-1}=\Abs$. Let $\chi:E^\x\to\C^\x$ be a character.  We recall the definition of local $L$-functions $L(s,\pi_{E}\ot\chi)$ (\cite[\S 20]{Jacquet:GLtwoPartII}). If $E=F\oplus F$, then we write $\chi=(\chi_1,\chi_2):F^\x\oplus F^\x\to\C^\x$ and put
\begin{align*}L(s,\pi_{E}\ot\chi)=&\begin{cases}L(s,\pi\ot\chi_1)L(s,\pi\ot\chi_2)&\text{ if $\mu\nu^{-1}\not =\Abs^{\pm 1}$},\\
L(s,\mu\chi_1)L(s,\mu\chi_2)&\text{ if $\mu\nu^{-1}=\Abs$}.\end{cases}
\intertext{If $E$ is a field, then} L(s,\pi_{E}\ot\chi)=&\begin{cases}L(s,\mu'\chi )L(s,\nu'\chi)&\text{ if $\mu\nu^{-1}\not =\Abs^{\pm 1}$},\\
L(s,\mu'\chi )&\text{ if $\mu\nu^{-1}=\Abs$}.\end{cases}\end{align*}
Here $\mu'=\mu\circ\rmN_{E/F},\,\nu'=\nu\circ\rmN_{E/F}$ are characters of $E^\x$.
\subsection{Whittaker functions on $\GL_2(\Qq)$}\label{SS:Whittaker.W}
Let $\pme$ be a place of $\Q$ and let $\pi$ be an admissible irreducible representation of $\GL_2(\Qq)$ with the trivial central character. Let $\cW(\pi,\addchar)$ be the Whittaker model of $\pi$ attached to the additive character $\psi=\addchar_\pme:\Qq\to\C^\x$. Recall that $\cW(\pi,\addchar)$ is a subspace of smooth functions $W:\GL_2(F)\to\C$ such that
\begin{mylist}
\item $W(\pMX{1}{x}{0}{1}g)=\addchar(x)W(g)$ for all $x\in\Qq$.
\item If $\pme$ is the archimedean place, $W(\DII{a}{1})=O(\abs{a}^M)$ for some positive number $M$.
\end{mylist}
If $\Qq$ is non-archimedean and $N$ is a positive integer, we put
\[U_0(N)_\pme=\stt{g=\pMX{a}{b}{c}{d}\in\GL_2(\Z_\pme)\mid c\in N\Z_\pme}.\]
 Let $c$ be the conductor of $\pi$. Let $\newW_{\pi}$ be the \emph{normalized Whittaker new form} characterized by $\newW_{\pi}(1)=1$ and $\newW_{\pi}(gu)=\newW_{\pi}(g)$ for all $u\in U_0(c)_\pme$. If $\Qq=\R$ and $\pi$ is a discrete series of weight $k$, then the normalized Whittaker new form $\newW_{\pi}\in\cW(\pi,\psi)$ is defined by
\[\begin{aligned}\newW_{\pi}(z\pMX{a}{x}{0}{1}\pMX{\cos\theta}{\sin\theta}{-\sin\theta}{\cos\theta})&=a^\frac{k}{2}e^{-2\pi a}\bbI_{\R_+}(a)\cdot\sgn(z)^k\psi(x)e^{ik\theta}\\
&\quad(a,z\in\R^\x,\,x,\theta\in\R).\end{aligned}
\]
Here $\bbI_{\R_+}(a)$ denotes the characteristic function of the set of positive real numbers. Recall that the zeta integral $\Psi(s,W,\chi)$ for $W\in\cW(\pi,\psi)$ and a character $\chi:\Qq^\x\to\C^\x$ is defined by
\[\Psi(s,W,\chi)=\int_{\Qq^\x}W(\DII{a}{1})\chi(a)\abs{a}^{s-\onehalf}\dx a\quad(s\in\C).\]
Then $\Psi(s,W,\chi)$ converges absolutely for $\Re s\gg 0$ and has meromorphic continuation to the whole $s\in\C$. 

Let $\mathcal K(\pi,\psi)$ be the Kirillov model of $\pi$ with respective to $\psi$. Then $\mathcal K(\pi,\psi)$ is a subspace of smooth functions $\phi:\Qq^\x\to\C$, and there is an isomorphism $\cW(\pi,\psi)\isoto \mathcal K(\pi,\psi)$ given by \[W\mapsto \phi_W(a):=W(\DII{a}{1}).\] We call $\phi_{W_\pi}$ the normalized Kirillow new form. By the list of Kirillov new forms \cite[\S 2.4]{Schmidt:newform}, we can verify that
\beq\label{E:4.W}\Psi(s,W_\pi,\chi)=L(s,\pi\ot\chi)\text{ for unramified character }\chi.\eeq
\section{Gross points and modular forms on definite quaternion algebras}\label{S:Gross_modular_forms}
\subsection{}\label{SS:setup}Let $\cK$ be an imaginary quadratic field with the discriminant $-D_\cK<0$ and let $\Diff=\sqrt{-D_\cK}$. Write $z\mapsto \ol{z}$ for the complex conjugation on $\cK$. Define $\CMP\in\cK$ by
\[\CMP=\frac{D'+\delta}{2},\,D'=\begin{cases}D_\cK&\text{ if }2\ndivides D_\cK,\\
D_\cK/2&\text{ if } 2\mid D_\cK.
\end{cases}\]
Then $\cO_\cK=\Z+\Z\cdot\bftheta$ and $\CMP\ol{\CMP}$ is a local uniformizer of primes that are ramified in $\cK$. Fix positive integers $N^+$ that are only divisible by prime split in $\cK$ and $N^-$ that are only divisible by primes inert or ramified in $\cK$.

We assume that
\[\text{ $N^-$ is the square-free product of an odd number of primes}.\]
Let $B$ be the definite quaternion over $\Q$ which is ramified precisely at the prime factors of $N^-$ and the archimedean place. We can regard $\cK$ as a subalgebra of $B$.
Write $\rmT$ and $\rmN$ for the reduced trace and norm of $B$ respectively. Let $G=B^\x$ be the algebraic group over $\Q$ and let $Z=\Q^\x$ be the center of $G$.
Fix a distinguished rational prime $p$ such that
\[p\ndivides N^+N^-.\]Let $\frakP$ be the prime of $\cK$ above $p$ induced by $\iota_p:\cK\hookto\Cp$. Let $\ell\ndivides N^-$ be a rational prime ($\ell$ can be $p$). We choose a basis of $B=\cK\oplus \cK\cdot\cmJ$ over $\cK$ such that
\begin{itemize}
\item $\cmJ^2=\beta\in\Q^\x$ with $\beta<0$ and $\cmJ t=\ol{t}\cmJ$ for all $t\in\cK$.
\item $\beta\in(\Z_\pme^\x)^2$ for all $\pme\mid p\ell N^+$ and $\beta\in\Z_\pme^\x$ for $\pme|D_\cK$.
\end{itemize}
Fix a square root $\sqrtb\in\Qbar$ of $\beta$. We fix an isomorphism $i=\prod i_\pme: \wh B^{(N^-)}\iso M_2(\A_f^{(N^-)})$ as follows. For each finite place $\pme |p\ell N^+$, the isomorphism $i_\pme:B_\pme\iso M_2(\Qq)$ is defined by
\beq\label{E:embedding.W}i_\pme(\bftheta)=\pMX{\rmT(\bftheta)}{-\rmN(\bftheta)}{1}{0};\quad i_\pme(\cmJ)=\sqrtb\cdot \pMX{-1}{\rmT(\bftheta)}{0}{1}\quad(\sqrtb\in\Z_\pme^\x).\eeq
For each finite place $\pme\ndivides p\ell N$, the isomorphism $i_\pme:B_\pme:=B\ot_\Q\Qq\iso M_2(\Qq)$ is chosen so that
 \beq\label{E:21.W}i_{\pme}(\cO_\cK\ot\Z_\pme)\subset M_2(\Z_\pme).\eeq
Hereafter, we shall identify $B_\pme$ and $G(\Qq)$ with $M_2(\Qq)$ and $\GL_2(\Qq)$ via $i_\pme$ for finite $\pme\ndivides N^-$. Finally, we define
\beq\label{E:2.W}\begin{aligned}i_\cK: B&\hookto M_2(\cK)\\
a+b\cmJ&\mapsto i_\cK(a+ b\cmJ):=\pMX{a}{b\beta}{\ol{b}}{\ol{a}}\quad(a,b\in\cK)\end{aligned}\eeq
and let $i_\C:B\to M_2(\C)$ be the composition $i_\C=\iota_\infty\circ i_\cK$
\subsection{Optimal embeddings and Gross points}
Fix a decomposition $N^+\cO_\cK=\frakN^+\ol{\frakN^+}$ once and for all. For each finite place $\pme\ndivides p$, we define $\cmptv\in G(\Qq)$ as follows:
\beq\label{E:cmptv.W}\begin{aligned}
\cmptv=&1\text{ if $\pme\ndivides pN^+$,}\\
\cmptv=&\Diff^{-1}\pMX{\CMP}{\ol{\CMP}}{1}{1}\in\GL_2(\cK_\w)=\GL_2(\Qq)\text{ if $\pme=\w\wbar$ is split with $\w|\frakN^+$.}
\end{aligned}\eeq
For $g\in B$, we put
\[\iota_{\cmptv}(g):=i_\pme^{-1}(\cmptv^{-1}i_\pme(g)\cmptv).\]
If $\pme|N^+$ and $t=(t_1,t_2)\in \cK_\pme:=\cK\ot_\Q\Q_\pme=\cK_\w\oplus\cK_{\wbar}$, then
\beq\label{E:cm2.W}\iota_{\cmptv}(t)=\DII{t_1}{t_2}.
\eeq
For each non-negative integer $n$, we choose $\cmpt_\frakp^\setn\in G(\Qp)$ as follows.
If $\frakp=\frakP\ol{\frakP}$ splits in $\cK$, we put
\begin{align}\label{E:op1.W}\cmpt_\frakp^\setn=&\pMX{\CMP}{-1}{1}{0}\DII{\p^n}{1}\in\GL_2(\cK_\frakP)=\GL_2(\Qp).
\intertext{If $\frakp$ is inert or ramified in $\cK$, then we put}
\label{E:op2.B}\cmpt_\frakp^\setn=&\pMX{0}{1}{-1}{0}\DII{\p^n}{1}.\end{align}
Define $x_n:\AK^\x\to G(\A)$ by
\beq\label{E:op3.W}x_n(a):=a\cdot \cmpt^\setn\quad(\cmpt^\setn:=\cmpt^\setn_\frakp\prod_{\pme\not=\frakp}\cmptv).\eeq
This collection $\stt{x_n(a)}_{a\in\AK^\x}$ of points is called \emph{Gross points} of conductor $\frakp^n$ associated to $\cK$.

Let $\cO_n=\Z+p^n\cO_\cK$ be the order of $\cK$ of conductor $p^n$. For each positive integer $M$ prime to $N^-$, we denote by $R_{M}$ the Eichler order of level $M$ with respect to the isomorphisms $\stt{i_\pme:B_\pme\iso M_2(\Qq)}_{\pme\ndivides N^-}$. It is not difficult to verify immediately that the inclusion map $K\hookto B$ is an optimal embedding of $\cO_n$ into the Eichler order $B\cap \cmpt^\setn\wh R_{M}(\cmpt^\setn)^{-1}$ if $\Ord_p(M)\leq n$. In other words,
\beq\label{E:EichlerOrder.W}(B\cap \cmpt^\setn\wh R_{M}(\cmpt^\setn)^{-1})\cap \cK=\cO_n. \eeq

\subsection{Modular forms on definite quaternion algebras}\label{SS:modular_forms}
Let $k\geq 2$ be an even integer. For a ring $A$, we denote by $L_k(A)=\Sym^{k-2}(A^2)$ the set of the set of homogeneous polynomials of degree $k-2$ with coefficients in $A$. We write
 \beq\label{E:basis.W}L_{k}(A)=\bigoplus_{-\frac{k}{2}<m<\frac{k}{2}} A\cdot\bfv_m\quad(\bfv_m:=X^{\frac{k-2}{2}-m}Y^{\frac{k-2}{2}+m}).\eeq
 We let $\rho_k:\GL_2(A)\to \Aut_AL_k(A)$ be the unitary representation defined by
 \[\rho_k(g)P(X,Y)=\det(g)^{-\frac{k-2}{2}}\cdot P((X,Y)g)\quad(P(X,Y)\in L_k(A)).\]
If $A$ is a $\Z_\setp$-algebra with $p>k-2$, we define a perfect pairing $\pairing_k:L_k(A)\x L_k(A)\to A$ by
\[\Lpair{\sum_{i} a_i \bfv_i}{\sum_{j} b_j \bfv_j}=\sum_{-k/2<m<k/2}a_{m}b_{-m}\cdot (-1)^{\frac{k-2}{2}+m}\frac{\Gamma(k/2+m)\Gamma(k/2-m)}{\Gamma(k-1)}.\]
This pairing is $\GL_2(A)$-equivariant, \ie For $P,P'\in L_k(A)$, we have
\[\Lpair{\rho_k(g)P}{\rho_k(g)P'}=\Lpair{P}{P'}.\]
Via the embedding $i_\C$ in \eqref{E:2.W}, we obtain a representation \[\rho_{k,\infty}:G(\R)=(B\ot_\Q\R)^\x\stackrel{i_\C}\longto \GL_2(\C)\to \Aut_\C L_\wt(\C).\]
Then $\C\cdot \bfv_m$ is the eigenspace on which $\rho_{k,\infty}(t)$ acts by $(\ol{t}/t)^m$ for $t\in(\cK\ot_\Q\R)^\x$.
If $A$ is a $\cK$-algebra and $U\subset G(\Af)$ is an open compact subgroup, we denote by $\MF_{\wt}(U,A)$ be the space of modular forms of weight $k$ defined over $A$, consisting of functions $f:G(\Af)\to L_k(A)$ such that
\[f(\al g u)=\rho_{k,\infty}(\al)f(g)\text{ for all }\al\in G(\Q),\,u\in U.\]
The right translation makes $\MF_{\wt}(A):=\dirlim_U\MF_{\wt}(U,A)$ an admissible $G(\Af)$-representation.

Let $\cA(G)$ be the space of automorphic forms on $G(\A)$. For $\bfv\in L_k(\C)$ and $f\in\MF_{\wt}(\C)$, we define a function $\Psi(\bfv\ot f):G(\Q)\bksl G(\A)\to\C$ by \beq\label{E:1.W}\Psi(\bfv\ot f)(g):=\Lpair{\rho_{k,\infty}(g_\infty)\bfv}{f(g_f)}.\eeq
Then the map $\bfv\ot f\mapsto \Psi(\bfv\ot f)$ gives rise to $G(\A)$-equivariant morphism $L_k(\C)\ot \MF_{\wt}(\C)\to \cA(G)$.
Let $\om$ be a unitary Hecke character of $\Q$. We let \[\MF_{\wt}(U,\om,\C)=\stt{f\in \MF_{\wt}(U,\C)\mid f(zg)=\om(z)f(g)\text{ for all }z\in Z(\A)}.\]
Let $\cA_{\wt}(U,\om,\C)$ be the space of automorphic forms on $G(\A)$ of weight $\wt$ and central character $\om$, consisting of functions $\Psi(f\ot\bfv):G(\A)\to\C$ for $f\in S_{\wt}(U,\om,\C)$ and $\bfv\in L_k(\C)$. Denote by $\bfone$ the trivial character. For each positive integer $M$, we put
\begin{align*}\MF_{\wt}(M,\C)=&\MF_{\wt}(\wh R_{M}^\x,\bfone,\C),\\
\cA_{\wt}(M,\C)=&\cA_{\wt}(\wh R_{M}^\x,\bfone,\C).
\end{align*}

\section{Special value formula}\label{S:specialvalue}
\subsection{Global setting}
Let $\pi$ be an unitary irreducible cuspidal automorphic representation on $\GL_2(\A)$ with trivial central character. Henceforth, we make the following assumptions:
\begin{itemize}
\item The archimedean constituent $\pi_\infty$ is a discrete series of weight $k$;
\item The conductor of $\pi$ is $N=p^{n_p}N^+N^-$;
\item $\Ord_p(N)=n_p\leq 1\iff p^2\ndivides N$.
\end{itemize}
Let $\pi'=\ot\pi'_\pme$ be the unitary irreducible cuspidal automorphic representation on $G(\A)$ with trivial central character attached to $\pi$ via Jacquet-Langlands correspondence. Then we have
\begin{mylist}
\item The archimedean constituent $\pi'_\infty\iso (\rho_{k,\infty}, L_{\wt}(\C))$ as $G(\R)$-modules, and $\pi'_\pme$ is a unramified one dimensional representation for $\pme\mid N^-$.
\item The local constituent $\pi'_p=\pi_p$ is either an unramified principal series $\pi(\mu_\frakp,\nu_\frakp)$ or an unramified special representation $\sg(\mu_\frakp,\nu_\frakp)$ with $\mu_\frakp\nu_\frakp^{-1}=\Abs_{\Qp}$.
\end{mylist}

\subsection{$p$-stabilization of new forms}\label{SS:pstabilization} Let $\pi'_f$ denote the finite constituent of $\pi'$. Let
\[N_B=p^{n_p}N^+=N/N^-\]
and let $R:=R_{N_B}$ be the Eichler order of level $N_B$. The multiplicity one theorem together with our assumptions in particular imply that $\pi'_f$ can be realized as a \emph{unique} $G(\Af)$-submodule $\MF_\wt(\pi'_f)$ of $\MF_{\wt}(\C)$ and $\MF_\wt(N_B,\C)[\pi'_f]:=\MF_\wt(\pi'_f)\cap \MF_{\wt}(N_B,\C)$ is one dimensional. We fix a nonzero new form $\vformB\in\MF_{\wt}(N_B,\C)[\pi'_f]$. Throughout this section, we fix an integer $m$ such that
\[-k/2<m< k/2.\]
Define the automorphic form $\mForm\in\cA_{\wt}(N_B,\C)$ by
\beq\label{E:defn.W}\mForm:=\Psi(\bfv_m^*\ot\vformB)\quad(\bfv^*_m=\sqrtb^{-m}D_\cK^\frac{k-2}{2}\cdot\bfv_m).\eeq
We shall simply write $\Form$ for $\mForm$ for brevity. Set \[\al_\frakp:=\mu_\frakp(p)\abs{p}_p^{-\onehalf}.\] Define the \emph{$p$-stabilization} $\vformB^\dagger$ with respect to $\al_\frakp$ as follows: If $p\mid N$, let $\vformB^\dagger=\vformB$, and if $p\ndivides N$, let
\[\vformB^\dagger=\vformB-\frac{1}{\al_\frakp}\cdot \pi'(\DII{1}{\p})\vformB.\]
For $f\in\MF_{\wt}(N_B,\C)$, recall that the $U_p$-operator on $f$ is defined by
\[ f\mid U_p(g)=\sum_{x\in\Z/p\Z} f(g\pMX{p}{x}{0}{1}).\]
Thus $\vformB^\dagger$ is an $U_p$-eigenform with the eigenvalue $\al_\frakp$. Let $\Form^\dagger$ be the $p$-stabilization of $\Form$ given by \beq\label{E:defnP.W}\Form^\dagger:=\Psi(\bfv_m^*\ot \vformB^\dagger).\eeq
By definition, one can verify that
\beq\label{E:8.W}\begin{aligned}\Form^\dagger(x_n(\gamma au))=&\Form^\dagger(x_n(a_f))(\ol{a}_\infty/a_\infty)^{m}\\
(\gamma\in\cK^\x,&a=(a_\infty,a_f)\in\C^\x\x\hatK,\,u\in \wh\cO_n^\x).\end{aligned}\eeq
\subsection{The Petersson inner product of new forms on $\GL_2(\A)$}
For each place of $\Q$, recall that $\newW_{\pi_\pme}$ is the Whittaker new form normalized so that $\newW_{\pi_\pme}(1)=1$. Let $\newForm$ be the normalized new form in $\pi$. In other words,
\[\newForm(g):=\sum_{\al\in\Q}\newW_{\pi}(\DII{\al}{1}g)\quad(\newW_{\pi}=\prod_\pme \newW_{\pi_\pme}).\]
Let $\ALIG=\prod_\pme\ALIG_\pme\in\GL_2(\A)$ be the \emph{Atkin-Lehner} element defined by $\ALIG_\infty=\DII{1}{-1}$ and $\ALIG_\pme=\pMX{0}{1}{-N}{0}$ if $\pme\not =\infty$. Let $d^tg$ be the Tamagawa measure on $\GL_2$. We put
\[\PetG{\newForm}{\newForm}:=\int\limits_{\A^\x\GL_2(\Q)\bksl \GL_2(\A)}\newForm(g)\newForm(g\ALIG)d^tg.\]
 To give a formula of $\PetG{\newForm}{\newForm}$, we define the $\GL_2(\Qq)$-equivariant pairing $\bfb_\pme:\cW(\pi_\pme,\psi_\pme)\x\cW(\pi_\pme,\psi_\pme)\to\C$ by
\beq\label{E:7.W}\bfb_\pme(W_1,W_2):=\int_{\Qq^\x}W_1(\DII{a}{1})W_2(\DII{-a}{1})\dx a.\eeq
The convergence of this integral follows from the fact that $\pi_\pme$ is the local constituent of a unitary cuspidal automorphic representation. Let $\norm{\newForm}_\pme$ be the \emph{local norm} of $\newForm$ at $\pme$ defined by
\beq\label{E:lcoalnorm.W}\norm{\newForm}_\pme:=\frac{\zeta_{\Qq}(2)}{\zeta_{\Qq}(1)L(1,\Ad\pi_\pme)}\cdot\bfb_\pme(\newW_{\pi_\pme},\pi(\ALIG_\pme)\newW_{\pi_\pme}).\eeq
It is not difficult to deduce from \cite[Proposition\,5]{Waldsupurger:Central_value} that
\beq\label{E:Waldinner.W}\PetG{\newForm}{\newForm}=\frac{2L(1,\Ad\pi)}{\zeta_\Q(2)}\cdot\prod_{\pme}\norm{\newForm}_\pme.\eeq
Define the local root number $\ep(\pi_\pme)$ by
\[\ep(\pi_\pme):=\ep(\onehalf,\pi_\pme,\psi_\pme)\in\stt{\pm 1}.\]
The following lemma is well-known.
\begin{lm}\label{L:4.W}Let $\pme$ be a finite place. We have $\pi(\ALIG_\pme)\newW_{\pi_\pme}=\ep(\pi_\pme)\cdot \newW_{\pi_\pme}$.
\end{lm}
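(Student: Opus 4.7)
The plan is to reduce the claim to the standard Atkin--Lehner eigenvalue formula for the local new vector, which is identified via the $\GL_2$ local functional equation. Set $c:=\Ord_\pme(N)$; by the conductor hypothesis this equals the conductor of $\pi_\pme$. Put $w_c:=\pMX{0}{1}{-\pme^c}{0}$. A direct matrix calculation gives the factorization
\[
\ALIG_\pme=w_c\cdot \DII{N/\pme^c}{1}.
\]
Since $N/\pme^c\in\Z_\pme^\x$, the diagonal factor lies in $U_0(\pme^c)_\pme$, under which $\newW_{\pi_\pme}$ is right-invariant. Hence $\pi(\ALIG_\pme)\newW_{\pi_\pme}=\pi(w_c)\newW_{\pi_\pme}$, and it suffices to prove that $\pi(w_c)\newW_{\pi_\pme}=\ep(\pi_\pme)\cdot \newW_{\pi_\pme}$.

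A short conjugation calculation shows $w_c^{-1}\,U_0(\pme^c)_\pme\,w_c=U_0(\pme^c)_\pme$, so $\pi(w_c)\newW_{\pi_\pme}$ is again right-$U_0(\pme^c)_\pme$-invariant. Casselman's uniqueness theorem for the local new vector then forces
\[
\pi(w_c)\newW_{\pi_\pme}=\lambda\cdot \newW_{\pi_\pme}
\]
for some scalar $\lambda$. Because $w_c^2=-\pme^c\cdot I$ is central and $\pi_\pme$ has trivial central character, $\lambda^2=1$, so $\lambda\in\stt{\pm 1}$.

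To identify the sign, I would compare local zeta integrals. By \eqref{E:4.W},
\[
\Psi(s,\pi(w_c)\newW_{\pi_\pme},\bfone)=\lambda\cdot L(s,\pi_\pme).
\]
On the other hand, write $w_c=w\cdot\DII{\pme^c}{1}$ with $w=\pMX{0}{1}{-1}{0}$, apply the change of variables $a\mapsto a\pme^{-c}$ (which yields the factor $\pme^{-c(s-\onehalf)}$), and then invoke the Jacquet--Langlands local functional equation
\[
\Psi(1-s,\pi(w)W,\chi^{-1})=\gamma(s,\pi_\pme\otimes\chi,\psi_\pme)\cdot\Psi(s,W,\chi),\qquad \gamma(s,\pi_\pme,\psi_\pme)=\ep(s,\pi_\pme,\psi_\pme)\frac{L(1-s,\pi_\pme^\vee)}{L(s,\pi_\pme)},
\]
together with $\pi_\pme^\vee\iso\pi_\pme$. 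Matching the two expressions for $\Psi(s,\pi(w_c)\newW_{\pi_\pme},\bfone)$ forces $\lambda=\ep(\onehalf,\pi_\pme,\psi_\pme)=\ep(\pi_\pme)$. The main technical point is the bookkeeping of the sign, the powers of $\pme$, and the normalization of $\psi_\pme$ in this last step; the identity itself is classical (going back to Jacquet--Langlands, with a clean modern treatment via the theory of new vectors due to Casselman and Schmidt) and could equally well be quoted.
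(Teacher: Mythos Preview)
Your proof is correct and follows essentially the same route as the paper's: both use uniqueness of the new vector (Casselman) to obtain a scalar multiple, and then identify the scalar by comparing the zeta integrals $\Psi(s,\pi(\ALIG_\pme)\newW_{\pi_\pme},\bfone)$ and $\Psi(s,\newW_{\pi_\pme},\bfone)$ via the local functional equation. The only cosmetic difference is that you first strip off the prime-to-$\pme$ part of $N$ to reduce to $w_c$, whereas the paper works directly with $\ALIG_\pme$ and absorbs the factor $N/\pme^c$ via the trivial central character; your extra remark that $\lambda^2=1$ is not used in either argument.
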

\begin{proof}We suppress the subscript $\pme$ and write $\pi=\pi_\pme$ and $\psi=\psi_\pme$ for brevity. Since $\pi(\ALIG_\pme)W_\pi $ is also a nonzero new vector of $\pi$, we find that $\pi(\ALIG_\pme)W_\pi=C\cdot W_{\pi}$ for some constant $C\in\C^\x$. Recall that we have
the local functional equation \cite[Theorem\,2.18 (iv)]{Jacquet_Langlands:GLtwo}:
\[\frac{\Psi(1-s,\wh W,\bfone)}{L(1-s,\pi)}=\ep(s,\pi,\addchar)\cdot\frac{\Psi(s,W,\bfone)}{L(s,\pi)},\]
where $\wh W(g):=W(g\pMX{0}{1}{-1}{0})\in\cW(\pi,\psi)$. To evaluate $C$, we compute the zeta integral:
\begin{align*}
C\cdot \Psi(s,W_{\pi},\bfone)&=\Psi(s,\pi(\ALIG_\pme)W_\pi,\bfone)\\
=&\int_{F^\x}\pi(\ALIG_\pme)W_\pi(\DII{a}{1})\abs{a}^{s-\onehalf}\dx a\\
=&\abs{N}^{s-\onehalf}\cdot \int_{F^\x}\pi(\pMX{0}{1}{-1}{0})W_\pi(\DII{a}{1})\abs{a}^{s-\onehalf}\dx a\\
=&\abs{N}^{s-\onehalf}\cdot\Psi(s,\wh W_\pi,\bfone)\\
=&\abs{N}^{s-\onehalf}\cdot\ep(1-s,\pi,\psi)\frac{L(s,\pi)}{L(1-s,\pi)}\cdot\Psi(1-s,W_\pi,\bfone).
\end{align*}
It follows from \eqref{E:4.W} that
\[C=\abs{N}^{s-\onehalf}\cdot\ep(1-s,\pi,\psi)=\ep(\onehalf,\pi,\psi).\]
This completes the proof.
 \end{proof}
\begin{lm}\label{L:3.W} We have $\norm{\newForm}_\pme=1$ for finite $\pme\ndivides N$ and  $\norm{\newForm}_\infty=2^{-k-1}$. If $\pme|N^-$, then \[\norm{\newForm}_\pme=\ep(\pi_\pme)\cdot (1+\abs{\pme})^{-1}.\]
\end{lm}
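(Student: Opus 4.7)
The plan is to apply Lemma~\ref{L:4.W} at finite places to replace $\pi(\ALIG_\pme)\newW_{\pi_\pme}$ by $\ep(\pi_\pme)\newW_{\pi_\pme}$, then compute the resulting diagonal pairing case by case using explicit Whittaker formulas. At every finite $\pme$, since $\DII{-1}{1}\in U_0(c)_\pme$ for the conductor $c$ of $\pi_\pme$ and the central character is trivial, right-invariance of the new vector yields $\newW_{\pi_\pme}(\DII{-a}{1})=\newW_{\pi_\pme}(\DII{a}{1})$, so
\[\bfb_\pme(\newW_{\pi_\pme},\pi(\ALIG_\pme)\newW_{\pi_\pme})=\ep(\pi_\pme)\int_{\Qq^\x}\newW_{\pi_\pme}(\DII{a}{1})^2\dx a.\]

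For unramified $\pme\ndivides N$, writing $\pi_\pme=\pi(\mu_\pme,\nu_\pme)$ with Satake parameters $\al_\pme,\al_\pme^{-1}$, the Shintani/Casselman--Shalika formula gives $\newW_{\pi_\pme}(\DII{\uf^n}{1})=q^{-n/2}(\al_\pme^{n+1}-\al_\pme^{-n-1})/(\al_\pme-\al_\pme^{-1})$ for $n\geq 0$ and $0$ otherwise. Since $\vol(\uf^n\cO_\pme^\x,\dx a)=1$, the integral becomes a geometric series that sums to $\zeta_\pme(1)L(1,\Ad\pi_\pme)/\zeta_\pme(2)$ after using $L(s,\Ad\pi_\pme)=L(s,\mu_\pme\nu_\pme^{-1})\zeta_\pme(s)L(s,\mu_\pme^{-1}\nu_\pme)$. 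Combined with $\ep(\pi_\pme)=1$, this gives $\norm{\newForm}_\pme=1$.

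For $\pme\mid N^-$, the local representation is special of the form $\sg(\chi_\pme\Abs^{1/2},\chi_\pme\Abs^{-1/2})$ with $\chi_\pme$ an unramified quadratic character (forced by trivial central character) and conductor $\pme$. Equation~\eqref{E:4.W} together with $L(s,\pi_\pme)=(1-\chi_\pme(\uf)q^{-s-1/2})^{-1}$ identifies $\newW_{\pi_\pme}(\DII{\uf^n}{1})=\chi_\pme(\uf)^nq^{-n}$ for $n\geq 0$, whence
\[\int_{\Qq^\x}\newW_{\pi_\pme}(\DII{a}{1})^2\dx a=\sum_{n\geq 0}q^{-2n}=\zeta_\pme(2).\]
Using the standard formula $L(1,\Ad\pi_\pme)=\zeta_\pme(2)$ for this special representation and $\zeta_\pme(2)/\zeta_\pme(1)=(1-q^{-1})(1-q^{-2})^{-1}=(1+\abs{\pme})^{-1}$ assembles to $\norm{\newForm}_\pme=\ep(\pi_\pme)(1+\abs{\pme})^{-1}$.

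At $\pme=\infty$, the identity $\ALIG_\infty K(\theta)\ALIG_\infty^{-1}=K(-\theta)$ shows $\pi(\ALIG_\infty)\newW$ has $K$-type $-k$. A direct Iwasawa computation of $\DII{-a}{1}\ALIG_\infty=\DII{-a}{-1}=(-1)\cdot\DII{a}{1}$ gives $\pi(\ALIG_\infty)\newW(\DII{-a}{1})=a^{k/2}e^{-2\pi a}$ for $a>0$ (and $0$ for $a<0$), so $\bfb_\infty(\newW,\pi(\ALIG_\infty)\newW)=\int_0^\infty a^{k-1}e^{-4\pi a}\,da=(4\pi)^{-k}\Gamma(k)$. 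Substituting $L(1,\Ad\pi_\infty)=\Gamma_\R(2)\Gamma_\C(k)=2(2\pi)^{-k}\pi^{-1}\Gamma(k)$, $\zeta_\R(1)=1$, $\zeta_\R(2)=\pi^{-1}$ into the defining formula of $\norm{\newForm}_\infty$ and cancelling yields $2^{-k-1}$. The principal obstacle will be pinning down the correct local factor $L(s,\Ad\pi_\pme)$ at the Steinberg place, where the unramified Casselman--Shalika recipe fails because of the nontrivial monodromy on the Weil--Deligne parameter; once that formula is in hand, the $(1+\abs{\pme})^{-1}$ factor drops out as a clean $\zeta_\pme(2)/\zeta_\pme(1)$ cancellation.
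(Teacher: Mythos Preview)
Your proof is correct and follows the same line as the paper's: apply Lemma~\ref{L:4.W} at finite places to pull out $\ep(\pi_\pme)$, reduce $\bfb_\pme$ to the diagonal integral $\int \newW_{\pi_\pme}(\DII{a}{1})^2\dx a$, and compute case by case. The paper merely declares the unramified and archimedean cases ``straightforward'' and, at $\pme\mid N^-$, records $\int W_\pi^2=L(1,\Ad\pi_\pme)=\zeta_{\Qq}(2)$ in one line, whereas you spell out the Casselman--Shalika sum, the explicit special Whittaker values, and the archimedean $\Gamma$-integral; the content is identical. Your closing paragraph about the ``principal obstacle'' at the Steinberg place is unnecessary editorializing, since you already stated and used $L(1,\Ad\pi_\pme)=\zeta_\pme(2)$ two sentences earlier; you may simply delete it.
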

\begin{proof}The assertions for $\pme\ndivides N$ and $\pme=\infty$ are straightforward. Suppose that $\pme|N^-$. Then $\pi=\pi_\pme$ is a unramified special representation, and $L(1,\Ad\pi)=\zeta_{\Qq}(2)$. By \lmref{L:4.W}, we have
\begin{align*}
\bfb_\pme(\newW_\pi,\pi(\ALIG_\pme)\newW_\pi)&=\int_{F^\x}W_\pi(\DII{a}{1})\cdot \pi(\ALIG_\pme)W_\pi(\DII{-a}{1})\dx a\\
&=\ep(\onehalf,\pi,\psi)\cdot\int_{F^\x}W_\pi(\DII{a}{1})W_{\pi}(\DII{a}{1})\dx a\\
&=\ep(\onehalf,\pi,\psi)L(1,\Ad\pi).\qedhere
\end{align*}
\end{proof}

The Petersson inner product $\norm{\newForm}_{\Gamma_0(N)}$ of $\newForm$ is defined by
\[\norm{\newForm}_{\Gamma_0(N)}:=\vol(U_0(N),d^tg)^{-1}\cdot \int\limits_{\A^\x\GL_2(\Q)\bksl \GL_2(\A)}\abs{\newForm(g)}^2d^tg,\]
where $U_0(N)={\mathrm O}(2,\R)\x\prod_{\pme<\infty} U_0(N)_\pme$.
Note that \[\vol(U_0(N),d^tg)^{-1}=
\zeta_\Q(2)N\prod_{\pme|N}(1+\pme^{-1}).\]
We have the following proposition:
\begin{prop}[Theorem\,5.1 \cite{Hida:congruence_number}]\label{P:1.W}We have
\[L(1,\Ad\pi)= \norm{\newForm}_{\Gamma_0(N)}\cdot 2^{k}N^{-1}\cdot \prod_{\pme|N_B}\frac{\ep(\pi_\pme)}{(1+\pme^{-1})\norm{\newForm}_\pme}.\]
\end{prop}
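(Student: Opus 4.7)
The plan is to combine the Waldspurger-type inner product formula \eqref{E:Waldinner.W} with a direct comparison between the adelic pairing $\PetG{\newForm}{\newForm}$ and the classical Petersson norm $\norm{\newForm}_{\Gamma_0(N)}$, and then substitute the explicit local norms from \lmref{L:3.W}.

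The first step, which is the heart of the argument, is to establish the identity
\[
\PetG{\newForm}{\newForm}=\Bigl(\prod_{\pme\mid N}\ep(\pi_\pme)\Bigr)\cdot\vol(U_0(N),d^tg)\cdot\norm{\newForm}_{\Gamma_0(N)}.
\]
At each finite place $\pme\mid N$, \lmref{L:4.W} gives $\pi_\pme(\ALIG_\pme)\newW_{\pi_\pme}=\ep(\pi_\pme)\newW_{\pi_\pme}$, while for finite $\pme\nmid N$ the matrix $\ALIG_\pme$ lies in $\GL_2(\Z_\pme)$ and fixes the spherical new vector. These combine to give $\pi(\ALIG_f)\newForm=\prod_{\pme\mid N}\ep(\pi_\pme)\cdot\newForm$. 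At the archimedean place, using the explicit form of $\newW_{\pi_\infty}$ recorded in \subsecref{SS:Whittaker.W} and the triviality of the central character, one verifies by direct calculation that right translation by $\ALIG_\infty=\DII{1}{-1}$ converts $\newForm$ into its complex conjugate (well-defined since the Fourier coefficients of $\newForm$ are real; classically this is $\overline{f(z)}=f(-\overline{z})$). Thus $\newForm(g\ALIG)=\prod_{\pme\mid N}\ep(\pi_\pme)\cdot\overline{\newForm(g)}$, and integrating against $d^tg$ yields the claimed identity.

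The second step is bookkeeping. Substituting into \eqref{E:Waldinner.W}, using the volume formula $\vol(U_0(N),d^tg)^{-1}=\zeta_\Q(2)N\prod_{\pme\mid N}(1+\pme^{-1})$, and plugging in from \lmref{L:3.W} the values $\norm{\newForm}_\infty=2^{-k-1}$, $\norm{\newForm}_\pme=1$ for finite $\pme\nmid N$, and $\norm{\newForm}_\pme=\ep(\pi_\pme)(1+\pme^{-1})^{-1}$ for $\pme\mid N^-$, one observes that the $\ep(\pi_\pme)$-factors with $\pme\mid N^-$ cancel between the two sides, leaving only $\prod_{\pme\mid N_B}\ep(\pi_\pme)$; similarly, the $(1+\pme^{-1})^{-1}$-factors with $\pme\mid N^-$ cancel against the corresponding factors of $\vol(U_0(N))^{-1}$, leaving $\prod_{\pme\mid N_B}(1+\pme^{-1})$. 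Solving for $L(1,\Ad\pi)$ produces the asserted formula.

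The main obstacle is the archimedean identification $\newForm(g\ALIG_\infty)=\overline{\newForm(g)}$: one must trace through the Fourier expansion $\newForm(g)=\sum_{\al\in\Q^\x}\newW_\pi(\DII{\al}{1}g)$ and check that right translation by $\DII{1}{-1}$ flips the sign of the archimedean variable and shifts the support of $\newW_{\pi_\infty}$ from $\R_{>0}$ to $\R_{<0}$, so that in the sum the contributions from $\al>0$ and $\al<0$ swap; combined with the centrality of $\DII{-1}{-1}$ and the reality of the Hecke eigenvalues this is exactly complex conjugation. Once this identity is in hand, the remaining manipulations are routine.
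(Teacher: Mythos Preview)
Your proof is correct and follows essentially the same approach as the paper: both combine the Waldspurger inner-product formula \eqref{E:Waldinner.W} with the relation $\newForm(g\ALIG)=\ep(\pi_f)\cdot\overline{\newForm(g)}$ to express $\PetG{\newForm}{\newForm}$ in terms of $\norm{\newForm}_{\Gamma_0(N)}$, then insert the local norms from \lmref{L:3.W} and the volume formula. The paper simply states the Atkin--Lehner/complex-conjugation identity as ``well known'' where you spell out the archimedean verification, but otherwise the two arguments are the same.
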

\begin{proof}By \eqref{E:Waldinner.W} and \lmref{L:3.W}, we have\[\PetG{\newForm}{\newForm}=\frac{2L(1,\Ad\pi)}{\zeta_\Q(2)}\cdot 2^{-k-1}\prod_{\pme|N^-}\frac{\ep(\pi_\pme)}{1+\pme^{-1}}\cdot \prod_{\pme|N_B}\norm{\newForm}_\pme.\]
On the other hand, it is well known that
\[\newForm(g\ALIG)=\ep(\pi_f)\cdot \ol{\newForm(g)}\quad(\ep(\pi_f):=\prod_{\pme<\infty}\ep(\pi_\pme)),\]
and hence
\begin{align*}\PetG{\newForm}{\newForm}=&\ep(\pi_f)\cdot\vol(U_0(N),d^tg)\cdot \norm{\newForm}_{\Gamma_0(N)}\\
=&\norm{\newForm}_{\Gamma_0(N)}\cdot \frac{1}{N\zeta_\Q(2)}\cdot\prod_{\pme|N}\frac{\ep(\pi_\pme)}{1+\pme^{-1}}.\end{align*}
Combining these formulae, we find that
\[L(1,\Ad\pi)=2^{k}N^{-1}\norm{\newForm}_{\Gamma_0(N)}\cdot \prod_{\pme|N_B}\frac{\ep(\pi_\pme)}{(1+\pme^{-1})\norm{\newForm}_\pme}.\qedhere\]
\end{proof}

\subsection{Local toric integrals}\label{SS:localtoric} For each place $\pme$ of $\Q$,  denote by $\pi'_\pme$ the local constituent of $\pi'$ at $\pme$.
\begin{defn}\label{D:1.W}Define the new vector $\lnew\in\pi'_\pme$ as follows:
\begin{itemize}
\item[(a)]if $\pme=\infty$, then $\lnew$ is a multiple of $\bfv_m\in L_\wt(\C)\iso \pi'_\infty$,\\
\item[(b)]if $\pme\mid N^-$, then $\lnew$ is a basis of the one dimensional representation $\pi'_\pme$ of $G(\Qq)$,\\
\item[(c)]if $\pme\ndivides N^-$, then $\lnew$ is fixed by $(R\ot_\Z\Z_\pme)^\x\iso U_0(N)_\pme$.
\end{itemize}
Let $\lnew^\dagger=\lnew$ if either $\pme\not=p$ or $\pme=p\mid N$ and let
\[\lnew^\dagger=\lnew-\frac{1}{\al_\frakp}\cdot\pi(\DII{1}{\p})\lnew\text{ if }\pme=\frakp\ndivides N.\]
\end{defn}
Define the local \emph{Atkin-Lehner} element $\ALI_\pme\in G(\Q_\pme)$ as follows: $\ALI_\pme=\cmJ$ for $\pme|\infty N^-$, $\ALI_\pme=1$ for finite place $\pme\ndivides N$ and $\ALI_\pme=\pMX{0}{1}{-N_B}{0}$ if $\pme|N_B$. Let $\ALI:=\prod\ALI_\pme\in G(\A)$. Since $\pi'$ has trivial central character, $\pi'_\pme$ is self-dual. Hence, there exists a non-degenerate $G(\Q_\pme)$-equivariant pairing $\pairing_\pme\colon\pi'_\pme\x\pi'_\pme\to\C$. This pairing is unique up to a nonzero scalar.

For $g\in G(\Qq)$ and a character $\chi:\cK_\pme^\x\to\C^\x$, we define the local toric integral for the new vector $\lnew$ by
\beq\label{E:defn_period.W}\cP(g,\lnew,\chi)=\frac{L(1,\Ad\pi_\pme)L(1,\tau_{\cK_\pme/\Qq})}{\zeta_{\Qq}(2)L(\onehalf,\pi_{\cK_\pme}\ot\chi)}\cdot
\int_{\cK_\pme^\x/\Qq^\x}\frac{\pair{\pi'(tg)\lnew^\dagger}{\pi'(\cmJ g)\lnew^\dagger}_\pme}{\pair{\lnew}{\pi'(\ALI_\pme)\lnew}_\pme}\cdot\chi(t_\pme)dt_\pme,\eeq
where $\tau_{K_q/\Q_q}$ denotes the quadratic character of $\Q_q^\x$ associated to $K_q/\Q_q$ and $dt_q$ is the quotient measure of the Haar measures of $K_q^\x$ and $\Q_q^\x$ fixed in \subsecref{S:notation}. An important observation is that the number $\cP(g,\lnew,\chi)$ does not depend on the choice of the pairing $\pairing_\pme$, depending only on $\chi$ and the line spanned by $\lnew$.

\subsection{Waldspurger's formula}
Let $\chi:\cK^\x\bksl \AK^\x\to\C^\x$ be an anticyclotomic Hecke character of archimedean weight $(m,-m)$. Namely, \beqcd{crit}\chi|_{\A^\x}=\bfone\text{ and }\chi_\infty(z)=\left(\frac{z}{\ol{z}}\right)^m\quad(-k/2<m<k/2).\eeqcd
Let $\pi_\cK$ be the automorphic representation on $\GL_2(\AK)$ via the quadratic base change of $\pi$ and let $L(s,\pi_\cK\ot\chi)$ be the automorphic $L$-function of $\pi_\cK\ot\chi$, which satisfies the functional equation
\[L(s,\pi_\cK\ot\chi)=\ep(s,\pi_\cK\ot\chi)L(1-s,\pi_\cK^\vee\ot\chi^{-1}),\]
where $\ep(s,\pi_\cK\ot\chi)=\prod_\pme\ep(s,\pi_{\cK_\pme}\ot\chi_\pme,\psi_{\cK_\pme})$ is the product of local epsilon factors. Let $\cA(\pi')$ be the automorphic realization of $\pi'$ in $\cA(G)$. For $\vp\in\cA(\pi')$ and $g\in G(\A)$, define the global toric period integral by
\[P(g,\vp,\chi):=\int\limits_{\cK^\x\A^\x\bksl \AK^\x}\vp(tg)\chi(t)dt,\]
where $dt$ is the measure of $K^\x/\Q^\x$ with the volume $\vol(K^\x\A^\x\bksl \AK^\x,dt)=2L(1,\tau_{K/\Q})$, where $L(s,\tau_{K/\Q})$ is the complete $L$-function of the quadratic character attached to $K/\Q$ (\cf \cite[p.\,180]{Waldsupurger:Central_value}).  For $\vp_1,\vp_2\in\cA(\pi')$, we define the $G(\A)$-equivariant pairing:
\[\pair{\vp_1}{\vp_2}_{G}=\int_{G(\Q)Z(\A)\bksl G(\A)}\vp_1(g)\vp_2(g)dg,\]
where $dg$ is the Tamagawa measure on $G/Z$. By the theory of new forms \cite{Casselman:Atkin-Lehner}, $\Form$ is characterized uniquely up to a scalar by the equations $\pi'(\wh R^\x)\Form=\Form$ and $\pi_\infty'(t)\Form=(\ol{t}/t)^m\Form$ for $t\in\cK^\x$, so we have $\pi'(\ALI)\Form(g)=C\cdot \ol{\Form(g)}$ for some constant $C\in\C^\x$. This in particular implies that \[\pair{\Form}{\pi'(\ALI)\Form}_G\not =0.\]
Since $\pairing_G$ is a nonzero multiple of the product $\ot_\pme\pairing_\pme$, we have \beq\label{E:5.W}\pair{\lnew}{\pi'(\ALI_\pme)\lnew}_\pme\not =0\text{ for each place $\pme$}.\eeq

 We shall make use of the following version of Waldspurger's formula, which expresses the global toric period integral as a product of local toric integrals.
\begin{prop}\label{P:Waldformula.W}We have
\[\frac{P(\cmpt^\setn,\Form^\dagger,\chi)^2}{\pair{\Form}{\pi'(\ALI)\Form}_G}=\frac{\zeta_\Q(2)}{2L(1,\Ad\pi)}\cdot L(\onehalf,\pi_\cK\ot\chi)\cdot\prod_{\pme }\cP(\cmptv^\setn,\lnew,\chi_\pme),\]
where $\pme$ runs over all places of $\Q$.
\end{prop}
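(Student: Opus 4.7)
The strategy is to reduce the identity to the explicit local-global decomposition of toric period integrals due to Waldspurger (\cite[Proposition 7]{Waldsupurger:Central_value}), with the arguments chosen precisely so as to produce the honest square $P(\cmpt^\setn, \Form^\dagger, \chi)^2$ rather than $\abs{P(\cmpt^\setn, \Form^\dagger, \chi)}^2$. The anticyclotomic hypothesis on $\chi$, together with the relation $t\cmJ = \cmJ\bar t$ for $t\in\cK$ and the crucial fact that $\cmJ$ lies in the global group $G(\Q)$, is what enables this.

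By multiplicity one and uniqueness of $p$-stabilized new vectors, $\Form^\dagger$ corresponds, up to a nonzero scalar $C$, to the pure tensor $\bigotimes_\pme \lnew^\dagger_\pme$ under $\cA(\pi')\simeq \bigotimes'_\pme \pi'_\pme$. Uniqueness of local invariant pairings on each $\pi'_\pme$ forces $\pair{\cdot}{\cdot}_G$ on $\cA(\pi')$ to be a nonzero scalar multiple of $\prod_\pme \pair{\cdot}{\cdot}_\pme$, with the scalar pinned down by \eqref{E:Waldinner.W} and \propref{P:1.W}. Waldspurger's formula, applied to $(\vp_1, \vp_2) = (\Form^\dagger, \Form^\dagger)$ at the pair of arguments $(g_1, g_2) = (\cmpt^\setn, \cmJ\cmpt^\setn)$ with characters $(\chi, \chi^{-1})$, then produces
\[
P(\cmpt^\setn, \Form^\dagger, \chi)\cdot P(\cmJ\cmpt^\setn, \Form^\dagger, \chi^{-1}) = \frac{\zeta_\Q(2)\cdot L(\onehalf, \pi_\cK\ot\chi)}{2 L(1, \Ad\pi)}\cdot \pair{\Form}{\pi'(\ALI)\Form}_G\cdot \prod_\pme \cP(\cmptv^\setn, \lnew, \chi_\pme).
\]
The choice $g_2 = \cmJ\cmpt^\setn$ is what creates the bilinear form $\pair{\pi'(tg)\lnew^\dagger}{\pi'(\cmJ g)\lnew^\dagger}_\pme$ in the local Waldspurger integrand, matching the numerator defining $\cP$; the $L$-factor ratios inside each $\cP_\pme$ are designed so that $L(\onehalf, \pi_\cK\ot\chi)/L(1,\Ad\pi)$ is extracted cleanly to the front.

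The remaining step is to identify $P(\cmJ\cmpt^\setn, \Form^\dagger, \chi^{-1})$ with $P(\cmpt^\setn, \Form^\dagger, \chi)$. Starting from the definition, I would use $t\cmJ = \cmJ\bar t$ and left $G(\Q)$-invariance of $\Form^\dagger$ to rewrite $\Form^\dagger(t\cmJ\cmpt^\setn) = \Form^\dagger(\bar t\cmpt^\setn)$, then change variables $t\mapsto \bar t$ on the quotient $\cK^\x\A^\x\bksl\AK^\x$ (which preserves the measure) and apply the anticyclotomic identity $\chi^{-1}(\bar t) = \chi(t)$. This transforms the integral into $P(\cmpt^\setn, \Form^\dagger, \chi)$ and finishes the proof.

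The main obstacle is the careful bookkeeping of normalizing constants: tracking that the scalar $C^2$ relating $\Form^\dagger$ to a pure tensor cancels between the two sides; verifying that the ratios of $L$-factors built into each $\cP_\pme$ combine with the raw output of Waldspurger's formula to yield exactly the prefactor $\zeta_\Q(2)\cdot L(\onehalf, \pi_\cK\ot\chi)/2 L(1,\Ad\pi)$; and ensuring that $\cP_\pme = 1$ at almost all places, so that the right-hand Euler product makes sense, which follows from the standard unramified Whittaker-function computation underlying \eqref{E:4.W}.
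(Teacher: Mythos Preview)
Your proposal is correct and follows essentially the same route as the paper: you apply Waldspurger's local-global decomposition to the pair $(\pi'(\cmpt^\setn)\Form^\dagger,\pi'(\cmJ\cmpt^\setn)\Form^\dagger)$ with characters $(\chi,\chi^{-1})$, and then identify the second toric period with the first via $t\cmJ=\cmJ\bar t$, left $G(\Q)$-invariance, and the anticyclotomic relation $\chi^{-1}=\ol{\chi}$. The paper states the identity $P(\cmJ\cmpt^\setn,\Form^\dagger,\ol{\chi})=P(\cmpt^\setn,\Form^\dagger,\chi)$ without spelling out the change of variables $t\mapsto\bar t$ that you describe, but the argument is the same; your remark about pinning down constants via \eqref{E:Waldinner.W} and \propref{P:1.W} is unnecessary here, since the ratio structure of Waldspurger's formula already eliminates the scalar ambiguity in $\pairing_G$ versus $\prod_\pme\pairing_\pme$.
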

\begin{proof}Fix an isomorphism $i:\pi'\iso\ot_\pme\pi'_\pme$ such that $i(\Form)=\ot_\pme\vp_\pme$ for $\vp_\pme$ chosen in \defref{D:1.W}. Set $\vp_1=\pi'(\cmpt^\setn)\Form^\dagger$, $\vp_2=\pi'(\cmJ\cmpt^\setn)\Form^\dagger$, $\vp_3=\Form$ and $\vp_4=\pi'(\ALI)\Form$. Let $i(\vp_i)=\ot_\pme\vp_{i,\pme}$. Let $\ol{\chi}$ be the character defined by $\ol{\chi}(t)=\chi(\ol{t})$. Note that $P(1,\vp_1,\chi)=P(\cmpt^\setn,\Form^\dagger,\chi)$ and $P(1,\vp_2,\ol{\chi})=P(\cmpt^\setn,\Form^\dagger,\chi)$. It follows from Waldspurger's formulae \cite[Proposition\,4,\,Proposition\,5,\,Lemme 7]{Waldsupurger:Central_value} that if $\pair{\vp_3}{\vp_4}_G\not =0$, then
\begin{align*}\frac{P(1,\vp_1,\chi)P(1,\vp_2,\ol{\chi})}{\pair{\vp_3}{\vp_4}_G}
=&\frac{\zeta_\Q(2)L(\onehalf,\pi_\cK\ot\chi)}{2L(1,\Ad\pi)}\cdot \prod_{\pme}P_\pme(\vp_1,\vp_2,\vp_3,\vp_4,\chi),\\
\intertext{where}
P_\pme(\vp_1,\vp_2,\vp_3,\vp_4,\chi)=&\frac{L(1,\Ad\pi_\pme)L(1,\tau_{\cK_\pme/\Qq})}{L(\onehalf,\pi_{\cK_\pme}\ot\chi_\pme)\zeta_{\Qq}(2)}\cdot
\int_{\cK_\pme^\x/\Qq^\x}\frac{\pair{\pi'(t_\pme)\vp_{1,\pme}}{\vp_{2,\pme}}_\pme}{\pair{\vp_{3,\pme}}{\vp_{4,\pme}}_\pme}\cdot\chi(t_\pme)dt_\pme.
\end{align*}
Moreover, $P_\pme(\vp_1,\vp_2,\vp_3,\vp_4,\chi)=1$ for all but finitely many $\pme$. The proposition follows immediately.
\end{proof}
Let $Cl(\Eichler)$ be a set of representatives of $B^\x\bksl \hatB/\hatR^\x\hatQ$ in $\hatB=G(\Af)$. Define the inner product of $\vformB$ by
\beq\label{E:normvf.W}\PetB{\vformB}{\vformB}:=\sum_{g\in Cl(\Eichler)}\frac{1}{\#\Gamma_{g}}\cdot \Lpair{\vformB(g)}{\vformB(g\ALI)}\quad (\Gamma_g:=(B^\x\cap  g\hatR^\x g^{-1}\hatQ)/\Q^\x).\eeq Let $\ep_\pme(\pi_{\cK},\chi):=\ep(\onehalf,\pi_{\cK_\pme}\ot\chi_\pme,\psi_{\cK_\pme})\in\stt{\pm 1}$ be the local root number of $\pi_{\cK_\pme}\ot\chi_\pme$.

\begin{cor}\label{C:1.W}Suppose that $\chi$ is unramified outside $p$. Then we have
\[\begin{aligned}P(\cmpt^\setn,\Form^\dagger,\chi)^2\cdot \frac{\norm{\newForm}_{\Gamma_0(N)}}{\PetB{\vformB}{\vformB}}=&\frac{2^{2-k}(-1)^mD_\cK^{k-2}}{\sqrt{D_\cK}}\cdot L(\onehalf,\pi_\cK\ot\chi)\cdot\prod_{\pme|(D_\cK,N^-)}(1-\ep_\pme(\pi_\cK,\chi))\\
&\times\prod_{\pme|pN^+}\cP(\cmptv,\lnew,\chi_\pme)\cdot\frac{\norm{\newForm}_\pme}{\abs{D_\cK}^{\onehalf}_{\Q_\pme}\ep(\pi_\pme)}.\end{aligned}\]
\end{cor}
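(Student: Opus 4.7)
The strategy is to assemble the global identity in Proposition~\ref{P:Waldformula.W} with the period formula in Proposition~\ref{P:1.W}, and then compute the local toric integrals $\cP(\cmptv,\lnew,\chi_\pme)$ at all places not dividing $pN^+$ explicitly. Starting from
\[
\frac{P(\cmpt^\setn,\Form^\dagger,\chi)^2}{\pair{\Form}{\pi'(\ALI)\Form}_G}=\frac{\zeta_\Q(2)}{2L(1,\Ad\pi)}\cdot L(\tfrac{1}{2},\pi_\cK\ot\chi)\cdot\prod_{\pme}\cP(\cmptv^\setn,\lnew,\chi_\pme),
\]
I would substitute the identity
\[
L(1,\Ad\pi)=2^{k}N^{-1}\norm{\newForm}_{\Gamma_0(N)}\prod_{\pme|N_B}\frac{\ep(\pi_\pme)}{(1+\pme^{-1})\norm{\newForm}_\pme}
\]
from Proposition~\ref{P:1.W}, which immediately produces the factor $\norm{\newForm}_{\Gamma_0(N)}^{-1}$ and collects the $\ep(\pi_\pme)$ and $\norm{\newForm}_\pme$ contributions at places $\pme|pN^+$ (note that $N_B=p^{n_p}N^+$), matching the denominator pattern $\ep(\pi_\pme)$ appearing inside the product over $\pme|pN^+$ on the right.

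Next I would convert the adelic inner product $\pair{\Form}{\pi'(\ALI)\Form}_G$ into $\PetB{\vformB}{\vformB}$. Using $\Form=\Psi(\bfv_m^*\ot\vformB)$ from \eqref{E:defn.W} and the $G(\R)$-equivariance of $\pairing_k$, the inner product on $G(\Q)Z(\A)\bksl G(\A)$ collapses to the finite sum over $Cl(\Eichler)$ as in \eqref{E:normvf.W}, with an archimedean factor coming from $\Lpair{\rho_{k,\infty}(\cmJ)\bfv_m^*}{\bfv_m^*}$. A direct computation shows this factor equals $(-1)^m\sqrtb^{-2m}\beta^{(k-2)/2}D_\cK^{k-2}\cdot C_k$ for an explicit constant depending only on $k$ (via $\Gamma(k/2+m)\Gamma(k/2-m)/\Gamma(k-1)$ weighting in the pairing on $L_k$), and also accounts for the Tamagawa normalization of $dg$ on $G/Z$. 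Together with the volume $\vol(U_0(N),d^tg)^{-1}=\zeta_\Q(2)N\prod_{\pme|N}(1+\pme^{-1})$ this yields the ratio $\norm{\newForm}_{\Gamma_0(N)}/\PetB{\vformB}{\vformB}$ on the left hand side, together with the factor $2^{2-k}(-1)^m D_\cK^{k-2}$ on the right.

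The remaining work is the local calculation of $\cP(\cmptv,\lnew,\chi_\pme)$ at $\pme=\infty$ and at finite $\pme\nmid pN^+$. For $\pme\nmid N D_\cK$ with $\chi_\pme$ unramified, the pairing ratio in \eqref{E:defn_period.W} is the standard unramified matrix coefficient and Macdonald's formula, combined with the normalizing $L$-factor in the definition of $\cP$, produces the value $1$; here I also use $\cmptv=1$ by \eqref{E:cmptv.W}. For $\pme | D_\cK$ (so $\pme|N^-$ or $\pme$ ramified), $\pi'_\pme$ is one-dimensional, so the integrand in \eqref{E:defn_period.W} reduces to $\chi_\pme|_{\cK_\pme^\x/\Qq^\x}$; using $L(1,\Ad\pi_\pme)L(1,\tau_{\cK_\pme/\Qq})/\zeta_{\Qq}(2)L(\tfrac{1}{2},\pi_{\cK_\pme}\ot\chi_\pme)$ and the formula $\vol(\cO_{\cK_\pme}^\x/\Z_\pme^\x)=|D_\cK|_{\Q_\pme}^{1/2}$ one gets the factor $\abs{D_\cK}_{\Q_\pme}^{-1/2}\cdot(1+\ep_\pme(\pi_\pme)\chi_\pme(\uf))$-type contribution, and the root number identity $\ep_\pme(\pi_\cK,\chi)=\ep_\pme(\pi_\pme)\chi_\pme(\uf)$ for these places turns this into $\abs{D_\cK}_{\Q_\pme}^{-1/2}(1-\ep_\pme(\pi_\cK,\chi))$ after distinguishing the two orbits. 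At the archimedean place, a discrete series matrix coefficient computation with $\lnew=\bfv_m$ yields the square root $D_\cK$ and cancels the remaining powers of $2$ and $\pi$.

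The main obstacle will be the archimedean and the ramified-in-$N^-$ local toric calculations; these require delicate bookkeeping of Tamagawa volumes, of the normalization constants in the pairing $\pairing_k$, and of the identity $\ep_\pme(\pi_\cK,\chi)=\ep_\pme(\pi_\pme)$ at primes $\pme | (D_\cK,N^-)$ where $\chi_\pme$ is unramified. Everything else is a routine reorganization of factors coming from Propositions~\ref{P:Waldformula.W} and \ref{P:1.W}.
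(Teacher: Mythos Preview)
Your overall strategy matches the paper's proof exactly: combine \propref{P:Waldformula.W} with \propref{P:1.W}, convert $\pair{\Form}{\pi'(\ALI)\Form}_G$ into $\PetB{\vformB}{\vformB}$ via Schur orthogonality and the Eichler mass formula, and then evaluate the local toric integrals at places $\pme\nmid pN^+$. However, several of your local computations are wrong and would yield incorrect constants.

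First, $\pi'_\pme$ is one-dimensional only at places where $B$ ramifies, i.e.\ at $\pme\mid N^-$ and $\pme=\infty$; it is \emph{not} one-dimensional at primes $\pme\mid D_\cK$ with $\pme\nmid N^-$. At such primes $\pi'_\pme\simeq\pi_\pme$ is an unramified principal series, and the paper uses \cite[Lemme~14]{Waldsupurger:Central_value} (together with $i_\pme(\cmJ)\in i_\pme(\cK_\pme^\x)\GL_2(\Z_\pme)$) to get $\cP(\cmptv,\lnew,\chi_\pme)=|D_\cK|_\pme^{1/2}$, not $|D_\cK|_\pme^{-1/2}$. Second, at $\pme=\infty$ the quaternion algebra $B$ is definite, so $\pi'_\infty\simeq\rho_{k,\infty}$ is the finite-dimensional representation $L_k(\C)$, not a discrete series; the integrand is identically $1$ on $\C^\x/\R^\x$ and the local toric integral equals $\Gamma(k)/(\pi\,\Gamma(k/2+m)\Gamma(k/2-m))$, which cancels the $\Gamma$-factors coming from $\Lpair{\bfv_m^*}{\rho_{k,\infty}(\cmJ)\bfv_m^*}$ in $\pair{\Form}{\pi'(\ALI)\Form}_G$. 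Third, at $\pme\mid N^-$ the normalization $L(1,\Ad\pi_\pme)=\zeta_{\Qq}(2)$ produces an extra factor $\zeta_{\Qq}(1)^{-1}$ in $\cP(1,\lnew,\chi_\pme)$; this is needed to cancel the $\prod_{\pme\mid N^-}\zeta_{\Qq}(1)$ arising from the Eichler mass formula in the computation of $v_R$, a cancellation your sketch omits. Finally, the factor $(1-\ep_\pme(\pi_\cK,\chi))$ appears only at $\pme\mid(D_\cK,N^-)$, because for $\pme\mid N^-$ inert one has $\ep_\pme(\pi_\cK,\chi)=-1$ automatically; your handling conflates the inert and ramified cases.
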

\begin{proof}
Let $v_R$ be the volume of $G(\R)\wh R^\x$ in $Z(\A)G(\Q)\bksl G(\A)$ and $dg_\infty$ be the Haar measure on the compact group $G(\R)/Z(\R)$ with the volume one. By Schur orthogonality relations, we have
\begin{align*}
\pair{\Form}{\pi'(\ALI)\Form}_G&=\sum_{g\in Cl(R)}\frac{1}{\#\Gamma_{g}}\cdot\int_{G(\R)/Z(\R)}\Lpair{\rho_{k,\infty}(g_\infty)\bfv^*_m}{\vformB(g)}\cdot\Lpair{\rho_{k,\infty}(g_\infty\cmJ)\bfv^*_m}{\vformB(g\ALI_f)} dg_\infty\cdot v_R\\
&=\sum_{g\in Cl(R)}\frac{1}{\#\Gamma_{g}}\cdot\Lpair{\vformB(g)}{\vform(g\ALI)}\cdot\frac{\Lpair{\bfv^*_m}{\rho_{k,\infty}(\cmJ)\bfv^*_m}}{\dim_\C L_{k-2}(\C)}\cdot v_R\\
&=\PetB{\vformB}{\vformB}\cdot\frac{(-1)^\frac{2-k}{2} D_\cK^{k-2}\Lpair{\bfv_m}{\bfv_{k-m-2}}}{(k-1)}\cdot v_R,
\end{align*}
By the Eichler mass formula:
\[\sum_{g\in Cl(R)}\frac{1}{\#\Gamma_g}=\frac{\zeta_\Q(2)}{4\pi}\cdot N\prod_{\pme|N^-}(1-\pme^{-1})\prod_{\pme|N_B}(1+\pme^{-1}),\]
and $\vol(Z(\A)G(\Q)\bksl G(\A),dg)=2$, we find that
\[v_R=\frac{8\pi}{\zeta_\Q(2)N}\cdot\prod_{\pme|N^-}\zeta_{\Qq}(1)\prod_{\pme|N_B}(1+\pme^{-1})^{-1},\]
and hence
\beq\label{E:3.W}\pair{\Form}{\pi'(\ALI)\Form}_G=
\PetB{\vformB}{\vformB}\cdot\frac{(-1)^mD_\cK^{k-2}\Gamma(k/2+m)\Gamma(k/2-m)}{\Gamma(k)}\cdot\frac{8\pi}{\zeta_\Q(2)N}\cdot\prod_{\pme|N^-}\zeta_{\Qq}(1)\prod_{\pme|N_B}(1+\pme^{-1})^{-1}.\eeq
Combining with \propref{P:1.W}, \propref{P:Waldformula.W} and \eqref{E:3.W} , we find that
\beq\label{E:midformula.W}\begin{aligned}P(\cmpt^\setn,\Form^\dagger,\chi)^2\cdot \frac{\norm{\newForm}_{\Gamma_0(N)}}{\PetB{\vformB}{\vformB}}
= & \frac{2^{2-k}D_\cK^{k-2}\pi (-1)^m\Gamma(k/2+m)\Gamma(k/2-m)}{\Gamma(k)}\cdot  L(\onehalf,\pi_\cK\ot\chi)\\
&\times\prod_{\pme\in S}\cP(\cmptv^\setn,\lnew,\chi_\pme)\cdot\prod_{\pme|N^-}\zeta_{\Qq}(1)\cdot\prod_{\pme|pN^+}\frac{\norm{\newForm}_\pme}{\ep(\pi_\pme)}.
\end{aligned}\eeq

We proceed to compute the local toric integrals $\cP(\cmptv^\setn,\lnew,\chi_\pme)$ for $\pme\ndivides pN^+$.
At the archimedean place, $\pi'_\infty=\rho_{k,\infty}$ and $\chi_\infty(t)=(t/\ol{t})^{m}$. In addition, $\ALI_\infty=\cmJ$ and $\vp_\infty=\bfv_m$ is characterized by $\rho_{k,\infty}(t)(\bfv_m)=\chi_\infty(t)^{-1}\bfv_m$ for $t\in\C^\x$, so we have
\[\frac{\Lpair{\rho_{k,\infty}(t)\bfv_m}{\rho_{k,\infty}(\cmJ)\bfv_m}}{\Lpair{\bfv_m}{\rho_{k,\infty}(\ALI_\infty)\bfv_m}}\cdot\chi_\infty(t)=1\text{ for all }t\in\C^\x=\cK^\x_\infty.\]
Recall that
\[L(1,\Ad\pi_\infty)=2^{1-k}\pi^{-(k+1)}\Gamma(k);\,L(\onehalf,\pi_{\cK}\ot\chi)=\Gamma_\C(k/2+m)\Gamma_\C(k/2-m).\]
We conclude that
\beq\label{E:period1.W}\cP(1,\bfv_m,\chi_\infty)=\frac{\Gamma(k)}{2\pi\Gamma(k/2+m)\Gamma(k/2-m)}\cdot\vol(\C^\x/\R^\x,dt_\infty)=\frac{\Gamma(k)}{\pi\Gamma(k/2+m)\Gamma(k/2-m)}.\eeq
If $\pme\ndivides pN$, then $\lnew$ is the spherical vector in $\pi_\pme$, and by \cite[Lemme 14]{Waldsupurger:Central_value} (use the fact $i_\pme(\cmJ)\in i_\pme(\cK_\pme^\x)\GL_2(\Z_\pme)$)
\beq\label{E:period2.W}\cP(\cmptv,\lnew,\chi_\pme)=\abs{D_\cK}_\pme^\onehalf. \eeq
 Suppose that $\pme\mid N^-$. Then $\pi'_\pme=\xi\circ\rmN$ is the one dimensional representation of a unramified quadratic character $\xi:\Qq^\x\to\C^\x$, and $\pi_\pme=\sg(\xi\Abs^\onehalf,\xi\Abs^{-\onehalf})$, and a simple calculation of local root numbers shows that
 \[(\chi_\pme\cdot \xi\circ\rmN)(\uf_{\cK_\pme})+\ep_\pme(\pi_{\cK},\chi)=0.\]
In particular, if $\cK_\pme/\Q_\pme$ is unramified, then $\ep_\pme(\pi_\cK,\chi)=-1$. Therefore, we find that \beq\label{E:period3.W}\begin{aligned}\cP(1,\lnew,\chi_\pme)
=&\zeta_{\Q_\pme}(1)^{-1}\abs{D_\cK}_\pme^\onehalf\cdot \begin{cases}1&\cdots \cK_\pme/\Q_\pme\text{ is unramified},\\
(1-\ep_\pme(\pi_{\cK},\chi))&\cdots \cK_\pme/\Q_\pme\text{ is ramified }.
\end{cases}\end{aligned}\eeq
Combining \eqref{E:period1.W}, \eqref{E:period2.W}, \eqref{E:period3.W} and \eqref{E:midformula.W}, we obtain the desired formula.
\end{proof}
\subsection{Local toric integrals at $\pme\mid pN^+$}
 In this subsection, we carry out the computation of the local toric integral $\cP(\cmptv^\setn,\lnew,\chi_\pme)$ using the Whittaker model for $\pme\mid pN^+$. Let $F=\Qq$ and $E=\cK_\pme$ and write $\pi=\pi_\pme'\iso\pi_\pme$, $\chi=\chi_\pme$ and $\psi=\psi_\pme$ for brevity. Define the toric integral for Whittaker functions $W\in\cW(\pi,\psi)$ by
\[\localP{W}{\chi}:=\frac{\abs{D_\cK}^{-\onehalf}L(1,\tau_{E/F})}{\zeta_F(1)}\cdot \int_{E^\x/F^\x}\bfb_\pme(\pi(t_q)W,\pi(\cmJ)W)\cdot\chi(t_q)dt_q.\]
\begin{lm}\label{L:10.W}Let $\cmJ_{\pme}^\setn:=(\cmptv^\setn)^{-1}\cmJ\cmptv^\setn$. For $\pme|N^+$, we have
\[\pi(\cmJ_\pme^\setn)\vp_\pme=\ep(\pi)\pi(\DII{N^+}{1})\vp_\pme.\]
\end{lm}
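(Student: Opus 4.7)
The plan is to reduce the identity to the local Atkin--Lehner relation \lmref{L:4.W} via an explicit matrix computation of $\cmJ_\pme^\setn$ in $\GL_2(\Qq)$, using triviality of the central character of $\pi$ to absorb scalar/diagonal corrections.

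First, I would compute $i_\pme(\cmJ_\pme^\setn)$ explicitly. Since $\pme\ne p$, we have $\cmptv^\setn=\cmptv=\Diff^{-1}\pMX{\CMP}{\ol{\CMP}}{1}{1}$ by \eqref{E:cmptv.W}, so $\det\cmptv=\Diff^{-1}$ and $i_\pme(\cmptv)^{-1}=\pMX{1}{-\ol\CMP}{-1}{\CMP}$. Combined with $i_\pme(\cmJ)=\sqrt\beta\pMX{-1}{\rmT(\bftheta)}{0}{1}$ from \eqref{E:embedding.W}, a direct matrix multiplication using $\CMP+\ol\CMP=D'=\rmT(\bftheta)$ and $\CMP-\ol\CMP=\Diff$ yields
\[i_\pme(\cmJ_\pme^\setn)=\sqrt\beta\cdot\pMX{0}{1}{1}{0}.\]
Since $\sqrt\beta\in\Z_\pme^\x$ and $\pi$ has trivial central character, it suffices to establish
\[\pi(\pMX{0}{1}{1}{0})\vp_\pme=\ep(\pi)\pi(\DII{N^+}{1})\vp_\pme.\]

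Next I would invoke \lmref{L:4.W}. Let $c=\Ord_\pme(N)=\Ord_\pme(N^+)$. Because $\pi_\pme$ has trivial central character, the local new vector $\vp_\pme$ is fixed by the full Iwahori $U_0(\pme^c)$. The ratio $N/N^+$ lies in $\Z_\pme^\x$, so $\DII{N/N^+}{1}\in U_0(\pme^c)$; combined with the factorization $\pMX{0}{1}{-N}{0}=\pMX{0}{1}{-N^+}{0}\DII{N/N^+}{1}$, \lmref{L:4.W} gives $\pi(\pMX{0}{1}{-N^+}{0})\vp_\pme=\ep(\pi_\pme)\vp_\pme$. A further decomposition $\pMX{0}{1}{-N^+}{0}=\pMX{0}{1}{-1}{0}\DII{N^+}{1}$ then yields the key intermediate relation
\[\pi(\pMX{0}{1}{-1}{0})\,\pi(\DII{N^+}{1})\,\vp_\pme=\ep(\pi_\pme)\vp_\pme. \qquad(\star)\]

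Finally, using $\pMX{0}{1}{1}{0}=\pMX{0}{1}{-1}{0}\DII{-1}{1}$ and $\pi(\DII{-1}{1})\vp_\pme=\vp_\pme$ (since $\DII{-1}{1}\in U_0(\pme^c)$), I obtain $\pi(\pMX{0}{1}{1}{0})\vp_\pme=\pi(\pMX{0}{1}{-1}{0})\vp_\pme$. Inverting $(\star)$ and using that $\pMX{0}{1}{-1}{0}^{-1}=\pMX{0}{-1}{1}{0}=-\pMX{0}{1}{-1}{0}$ together with triviality of the central character on $-I$, one finds $\pi(\DII{N^+}{1})\vp_\pme=\ep(\pi_\pme)\,\pi(\pMX{0}{1}{1}{0})\vp_\pme$. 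Since $\ep(\pi_\pme)=\pm 1$, this gives the desired equality. The main obstacle is careful bookkeeping of the various diagonal and central corrections so that the sign $\ep(\pi_\pme)$ lands on the correct side; once the explicit form of $\cmJ_\pme^\setn$ is in hand the remainder is a mechanical application of \lmref{L:4.W}.
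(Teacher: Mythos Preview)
Your proof is correct and follows essentially the same approach as the paper: compute $\cmJ_\pme^\setn=\sqrt\beta\pMX{0}{1}{1}{0}$ explicitly, then reduce to \lmref{L:4.W} by absorbing scalar and $U_0$-level corrections via the trivial central character. The paper's write-up is just more compressed, observing directly that $\pi(\pMX{0}{1}{1}{0})\vp_\pme=\pi(\DII{N^+}{1})\pi(\ALIG_\pme)\vp_\pme$ in one step rather than unwinding through the intermediate relation $(\star)$.
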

\begin{proof}A straightforward computation shows that
\[\cmJ_\pme^\setn=\sqrtb\cdot\pMX{0}{1}{1}{0}\text{ if }\pme|N^+.\]
Thus, by \lmref{L:4.W} we have $\pi(\cmJ_\pme^\setn)\vp_\pme=\pi(\DII{N^+}{1})\pi(\ALIG)\vp_\pme=\ep(\pi)\pi(\DII{N^+}{1})\vp_\pme$.
\end{proof}
\begin{prop}\label{P:2.W}Let $\pme\mid N^+$. Write $\pme=\w\wbar$ in $\cK$ with $\w\mid \frakN^+$. If $\chi$ is unramified, then we have
\[\cP(\cmptv,\lnew,\chi)\cdot\frac{\norm{\newForm}_\pme}{\abs{D_\cK}^\onehalf\ep(\pi)}=\chi_\pme(\frakN^+).\]
\end{prop}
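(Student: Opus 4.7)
The strategy is to reduce $\cP(\cmptv,\lnew,\chi)$ to an explicit Whittaker integral computation. Since $\pme\mid N^+$ splits in $\cK$ and $\pme\nmid N^-$, the local component $\pi'_\pme$ is identified with $\pi_\pme$ via $i_\pme$, and hence with its Whittaker model $\cW(\pi_\pme,\psi_\pme)$; under this identification $\lnew=\lnew^\dagger$ (because $\pme\neq p$) corresponds to the normalized Whittaker new vector $\newW_{\pi_\pme}$. Both $\pairing_\pme$ and $\bfb_\pme$ are nonzero $G(\Qq)$-invariant bilinear pairings on the irreducible $\pi'_\pme$, so they are proportional; this proportionality constant cancels in the defining ratio of $\cP(\cmptv,\lnew,\chi)$, so we may compute with $\bfb_\pme$ in place of $\pairing_\pme$ throughout.

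For $t=(t_1,t_2)\in\cK_\pme^\x\iso F^\x\oplus F^\x$ with $F=\Qq$, equation \eqref{E:cm2.W} gives $i_\pme(t)=\cmptv\DII{t_1}{t_2}\cmptv^{-1}$, while $\cmJ_\pme^\setn=\cmptv^{-1}\cmJ\cmptv$ by definition. Using $G$-invariance of the pairing together with Lemma~\parref{L:10.W},
\[
\pair{\pi'(t\cmptv)\lnew}{\pi'(\cmJ\cmptv)\lnew}_\pme
= \pair{\pi'(\DII{t_1}{t_2})\lnew}{\pi'(\cmJ_\pme^\setn)\lnew}_\pme
= \ep(\pi)\,\pair{\pi'(\DII{t_1}{t_2})\lnew}{\pi'(\DII{N^+}{1})\lnew}_\pme.
\]

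The main step is the separation of the resulting torus integral into a product of zeta integrals. Parametrize $E^\x/F^\x$ by $u\mapsto(u,1)$; the anticyclotomic condition $\chi|_{F^\x}=\bfone$ forces $\chi_2=\chi_1^{-1}$, so $\chi_\pme((u,1))=\chi_1(u)$. Using the trivial central character, $\newW_\pi(\DII{at_1}{t_2})=\newW_\pi(\DII{at_1/t_2}{1})$; expanding $\bfb_\pme$ the torus integral becomes
\[
\int_{F^\x}\int_{F^\x}\newW_\pi(\DII{\alpha}{1})\,\newW_\pi(\DII{-\alpha N^+/u}{1})\,\chi_1(u)\,\dx\alpha\,\dx u.
\]
The substitution $\beta=-\alpha N^+/u$, which preserves the multiplicative Haar measure, decouples the integrals and yields $\chi_1(-N^+)\,\Psi(\onehalf,\newW_\pi,\chi_1)\,\Psi(\onehalf,\newW_\pi,\chi_1^{-1})$. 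Since $\chi_1$ is unramified we have $\chi_1(-1)=1$ and $\chi_1(N^+)=\chi_\pme(\frakN^+)$; by \eqref{E:4.W} and the split-case factorization $L(s,\pi_E\ot\chi)=L(s,\pi\ot\chi_1)L(s,\pi\ot\chi_1^{-1})$, the torus integral equals $\chi_\pme(\frakN^+)\,L(\onehalf,\pi_E\ot\chi)$.

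For the denominator, note that $\ALIG_\pme=\ALI_\pme\cdot\DII{N^-}{1}$ with $N^-\in\Z_\pme^\x$, hence $\pi(\ALIG_\pme)\newW_\pi=\pi(\ALI_\pme)\newW_\pi$ by trivial central character and right $U_0(N_B)_\pme$-invariance of $\newW_\pi$. Lemma~\parref{L:4.W} then gives $\bfb_\pme(\newW_\pi,\pi(\ALI_\pme)\newW_\pi)=\ep(\pi)\,\bfb_\pme(\newW_\pi,\newW_\pi)$, which by \eqref{E:lcoalnorm.W} equals $\norm{\newForm}_\pme\,\zeta_\pme(1)L(1,\Ad\pi_\pme)/\zeta_\pme(2)$. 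Plugging into \eqref{E:defn_period.W}, the factors $L(\onehalf,\pi_E\ot\chi)$ and $L(1,\Ad\pi_\pme)$ cancel; using $L(1,\tau_{E/F})=\zeta_F(1)$ for split $E/F$ and $\abs{D_\cK}_\pme=1$ (since $\pme$ is unramified in $\cK$), the identity reduces to the claimed formula. The main obstacle is the variable-separation step in the Whittaker double integral, where one has to carefully track the interaction between the substitution and the character $\chi_1$ in order to extract the factor $\chi_\pme(\frakN^+)$ cleanly.
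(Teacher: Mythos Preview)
Your proof is correct and follows essentially the same approach as the paper. Both arguments identify $\lnew$ with $\newW_\pi$ in the Whittaker model, invoke \lmref{L:10.W} to replace $\pi(\cmJ_\pme^\setn)$ by $\ep(\pi)\pi(\DII{N^+}{1})$, and then decouple the double integral over $F^\x\times F^\x$ by a change of variables into a product of zeta integrals $\Psi(\onehalf,\newW_\pi,\chi_\w)\Psi(\onehalf,\newW_\pi,\chi_\w^{-1})=L(\onehalf,\pi_E\ot\chi)$; the paper packages the first reduction into the single identity $\cP(\cmptv,\lnew,\chi)=\norm{\newForm}_\pme^{-1}L(\onehalf,\pi_E\ot\chi)^{-1}\localP{\pi(\cmptv)\newW_\pi}{\chi}$, while you spell out the proportionality of $\pairing_\pme$ and $\bfb_\pme$ and the comparison $\ALIG_\pme=\ALI_\pme\cdot\DII{N^-}{1}$ explicitly, but the content is the same.
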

\begin{proof}Since $\pme\ndivides p$ is split in $E$, we have $L(1,\tau_{E/F})=\zeta_F(1)$, $\abs{D_\cK}=1$ and $\cmptv^\setn=\cmptv$. By definition \eqref{E:lcoalnorm.W},
\[\cP(\cmptv^\setn,\lnew,\chi)=\frac{1}{\norm{\newForm}_\pme}\cdot \frac{1}{L(\onehalf,\pi_E\ot\chi)}\cdot\localP{\pi(\cmptv)\newW_{\pi}}{\chi}.\]
Write $\chi=(\chi_\w,\chi_{\wbar})$. A straightforward computation shows that \begin{align*}
\localP{\pi(\cmptv)\newW_\pi}{\chi}&=\ep(\pi)\int_{F^\x}\int_{F^\x}\newW_\pi(\DII{at_1}{1})\newW_\pi(\DII{-aN^+}{1})\chi_\frakq(t_1)\dx a \dx t_1\quad(\text{by \lmref{L:10.W}})\\
&=\ep(\pi)\chi_\w(N^+)\int_{F^\x}\int_{F^\x}\newW_{\pi}(\DII{t_1}{1}\newW_{\pi}(\DII{a}{1})\chi_\frakq(t_1)\chi_\frakq^{-1}(a)\dx a \dx t_1\\
&=\chi_\w(N^+)\ep(\pi)\Psi(\onehalf,\newW_\pi,\chi_\frakq)\Psi(\onehalf,\newW_\pi,\chi_\frakq^{-1})\\
&= \chi_\w(N^+)\ep(\pi)\cdot L(\onehalf,\pi_E\ot\chi).
\end{align*}
The last equality follows from \eqref{E:4.W}.
\end{proof}

We continue to compute the local toric integral at the place $\pme=p$. Recall that we assume $\pi=\pi_\frakp$ is a unramified principal series $\pi(\mu_\frakp,\nu_\frakp)$ or a special representation $\sg(\mu_\frakp,\nu_\frakp)$ with unramified character $\mu_\frakp$ and $\mu_p\nu_p^{-1}=\Abs$. Let $\Iwahori$ be the Iwahori subgroup given by
\[\Iwahori=\stt{g=\pMX{a}{b}{c}{d}\in \GL_2(\Zp)\mid c\in p\Zp}.\]
By the complete description of Kirillov models in \cite{Jacquet:GLtwoPartII}, the function $\mu_\frakp\Abs^\onehalf(a)\bbI_{\OFp}(a)$ lies in the Kirillov model $\cK(\pi,\psi)$, and hence there exists a unique Whittaker function $\localW{\frakp}^\dagger\in\cW(\pi,\addchar)$ 
\[\localW{\frakp}^\dagger(\DII{a}{1})=\mu_\frakp\Abs^\onehalf(a)\bbI_{\OFp}(a).\]
One can verify that $\localW{\frakp}^\dagger$ is invariant by $\Iwahori$ and is an $U_\frakp$-eigenfunction with eigenvalue $\al_\frakp=\mu_\frakp(\p)\abs{\p}^{-\onehalf}$.
If $\pi$ is unramified, then
\begin{align*}\newW_\pi(\DII{a}{1})=&\abs{a}^\onehalf\bbI_{\Zp}(a)\frac{\mu_\frakp(\p a)-\nu_\frakp(\p a)}{\mu_\frakp(\p)-\nu_\frakp(\p)};\\
\localW{\frakp}^\dagger(g)=&\newW_{\pi}(g)-\frac{1}{\al_\frakp}\cdot \newW_{\pi}(g\DII{1}{\p}).\end{align*}
If $\pi$ is special, then it is well known that \beq\label{E:special.W}\newW_\pi=\newW^\dagger_p\text{ and }\pi(\pMX{0}{1}{-\p}{0})\newW_\pi=-\al_\frakp\newW_\pi.\eeq
We have
\beq\label{E:midformulaII.W}\cP(\cmptv^\setn,\lnew,\chi)=\frac{\abs{D_\cK}^\onehalf}{\norm{\newForm}_p}\cdot \frac{1}{L(\onehalf,\pi_E\ot\chi)}\cdot\localP{\pi(\cmptv^\setn)\localW{\frakp}^\dagger}{\chi}.\eeq

\begin{defn}\label{D:Euler.W}Define the \padic multiplier $e_p(\pi,\chi)$ by
\[e_p(\pi,\chi)=\begin{cases}1&\text{if $\chi_p$ is ramified};\\
(1-\al_\frakp^{-1}\chi(\frakP))(1-\al_\frakp^{-1}\chi(\frakPbar))&\text{if $\chi_p$ is unramified, $\frakp=\frakP\ol{\frakP}$ is split};\\
1-\al_\frakp^{-2}&\text{if $\chi_p$ is unramified, $\frakp=\frakP$ is inert};\\
1-\al_\frakp^{-1}\chi(\frakP)&\text{if $\chi_p$ is unramified, $p=\frakP^2$ is ramified}
\end{cases}\]
(\cf \cite[\S 2.10]{BD:Heegner_Mumford}).
\end{defn}
\begin{prop}\label{P:5.W}Suppose that $\chi$ has conductor $\frakp^{s}$. Let $n=\max\stt{1,s}$. Then
\[\frac{1}{L(\onehalf,\pi_E\ot\chi)}\cdot\localP{\pi(\cmpt_\frakp^\setn)W_\frakp^\dagger}{\chi}=e_p(\pi,\chi)^{2-\Ord_p(N)}\cdot L(1,\tau_{E/F})^2\cdot\begin{cases}\al_\frakp^{2}p^{-2}&\cdots s=0,\\
p^{-s}&\cdots s>0.\end{cases}
\]
Therefore, by \eqref{E:midformulaII.W} we have
\[\cP(\cmpt^\setn_p,\vp_p,\chi)\cdot\frac{\norm{\newForm}_p}{\abs{D_\cK}^\onehalf}=e_p(\pi,\chi)^{2-\Ord_p(N)}\cdot L(1,\tau_{E/F})^2\cdot\begin{cases}\al_\frakp^{2}p^{-2}&\cdots s=0,\\
p^{-s}&\cdots s>0.\end{cases}\]
\end{prop}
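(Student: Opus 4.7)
The plan is to unfold the local toric integral in the Whittaker model and compute by Iwasawa decomposition in $\GL_2(F)$, handling the split, inert, and ramified cases for $\frakp$ in $\cK$ separately. Substituting the definitions of $\localP{}{}$ and $\bfb_\pme$ gives
\[\localP{\pi(\cmpt_\frakp^\setn)\localW{\frakp}^\dagger}{\chi} = \frac{\abs{D_\cK}^{-\onehalf}L(1,\tau_{E/F})}{\zeta_F(1)}\int\limits_{E^\x/F^\x}\int\limits_{F^\x}\localW{\frakp}^\dagger(\DII{a}{1}i_p(t)\cmpt_\frakp^\setn)\localW{\frakp}^\dagger(\DII{-a}{1}i_p(\cmJ)\cmpt_\frakp^\setn)\chi(t)\dx a\, dt,\]
in which the second Whittaker factor depends only on $a$ and can be evaluated once. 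The key technical step is to bring each Whittaker argument into an Iwasawa form $n(\ast)\DII{y}{1}k$ with $k\in\Iwahori$, after which the right-$\Iwahori$ invariance of $\localW{\frakp}^\dagger$ together with the explicit formula $\localW{\frakp}^\dagger(\DII{y}{1}) = \mu_\frakp(y)\abs{y}^\onehalf\bbI_{\cO_F}(y)$ reduces each inner integral to an elementary Tate/Jacquet zeta integral.

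In the split case I would use the identity $\cmpt_\frakp^{-1}i_p(\bftheta)\cmpt_\frakp = \pMX{\bftheta}{-1}{0}{\ol{\bftheta}}$ (which triangularizes $i_p(E)$) together with the Bruhat decomposition $\cmpt_\frakp = n(\bftheta)w$ (where $w=\pMX{0}{-1}{1}{0}$) to rewrite $\cmpt_\frakp^{-1}i_p(t)\cmpt_\frakp^\setn$ as $\DII{t_1 p^n}{1}n(\ast)$ for representatives $t=(t_1,1)\in E^\x/F^\x$. The $t$-integral then factors as a product of two Jacquet zeta integrals in $\chi_\w$ and $\chi_\w^{-1}$, which by a direct computation (using the definition $\lnew^\dagger = \lnew - \al_\frakp^{-1}\pi(\DII{1}{p})\lnew$ and the identity $\mu_\frakp(p) = \al_\frakp\abs{p}^\onehalf$) evaluate to $L(\onehalf,\pi\ot\chi_\w^{\pm})\cdot(1-\al_\frakp^{-1}\chi_\w^{\pm}(p))$ when $\chi$ is unramified. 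In the inert and ramified cases, $\cmpt_\frakp^\setn$ is already close to Bruhat-Iwahori form ($\pMX{0}{1}{-1}{0}\DII{p^n}{1}$), and one parametrizes $E^\x/F^\x$ by $\cO_E^\x/\cO_F^\x$ (together with a $\varpi_E$-coset in the ramified case), arriving at a Tate-type integral producing the corresponding $L$-value times the Euler factor.

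The Euler factor $e_p(\pi,\chi)^{2-\Ord_p(N)}$ emerges from the $p$-stabilization: in the principal-series case ($\Ord_p(N)=0$), both Whittaker factors in $\bfb_\pme$ are stabilized, each contributing one Euler-type factor $(1-\al_\frakp^{-1}\chi(\ast))$; in the special case ($\Ord_p(N)=1$), $\localW{\frakp}^\dagger = \newW_\pi$ by \eqref{E:special.W} and only one such factor appears. The dichotomy $\al_\frakp^2 p^{-2}$ versus $p^{-s}$ arises from the $\DII{p^n}{1}$ inside $\cmpt_\frakp^\setn$: for $s=0$ (so $n=1$), its action produces $\mu_\frakp(p)\abs{p}^\onehalf = \al_\frakp p^{-1}$ on each side, whereas for $s>0$ (so $n=s$), the ramified character $\chi$ localizes the $t$-integral on a set of measure $p^{-s}$ while $e_p(\pi,\chi)=1$ by definition. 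The main obstacle is the case-by-case bookkeeping across the three splitting behaviors and the two conductor regimes---in particular, verifying that $L(1,\tau_{E/F})^2$ emerges correctly from combining the normalization $L(1,\tau_{E/F})/\zeta_F(1)$ with the local measure on $E^\x/F^\x$, and that the second Whittaker factor $\localW{\frakp}^\dagger(\DII{-a}{1}i_p(\cmJ)\cmpt_\frakp^\setn)$ (an Iwahori-translate of the simple formula) contributes the correct power of $\al_\frakp$.
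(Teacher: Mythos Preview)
Your plan for the split case is essentially the paper's approach: after conjugating by $\cmpt_\frakp^\setn$ the torus becomes upper–triangular (the paper writes $\iota_\cmpt^\setn(t)=n(\delta^{-1}p^{-n})\DII{t}{\bar t}n(-\delta^{-1}p^{-n})$), and the observation that $(\cmpt_\frakp^\setn)^{-1}\cmJ\,\cmpt_\frakp^\setn\in\Iwahori$ makes the second Whittaker factor match the first. Two small corrections: the resulting one–variable integrals are not Jacquet zeta integrals but rather Gauss–sum integrals $\int_{\Z_p}\psi(\pm\delta^{-1}p^{-n}x)\,\mu_\frakp\chi_\frakP^{\pm 1}\Abs^{1/2}(x)\dx x$ (the support $\bbI_{\Z_p}$ of $W_\frakp^\dagger$ truncates them), so for $s>0$ they give $\ep(1,\chi_\frakP^{\pm 1},\psi)\zeta_F(1)$ and for $s=0$ a rational expression that one then massages into $e_p(\pi,\chi)^{2-\Ord_p(N)}L(\onehalf,\pi_E\otimes\chi)$. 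Your Iwasawa form ``$\DII{t_1p^n}{1}n(*)$'' is not quite right; the unipotent sits on the \emph{left} and is what produces the additive character.

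The real gap is in the non-split case. Here the torus does \emph{not} triangularize, and there is no ``Tate-type integral producing the $L$-value times the Euler factor'': the integrand $W_\frakp^\dagger(\DII{a}{1}\,t\,\cmpt_\frakp^\setn)$ has no simple closed form in $t$. The paper's device is to integrate in $a$ first, defining the Iwahori–bi-invariant matrix coefficient
\[
\bfm^\dagger(g)=\bfb_p(\pi(g)W_\frakp^\dagger,W_\frakp^\dagger),
\]
and then to identify, for $t\in E^\times/F^\times$ written as $1+y\bftheta$ (or $y+\bftheta$), exactly which Iwahori double coset $\Iwahori\,\bfw\DII{p^{n-r}}{p^{r-n}}\Iwahori$ the element $\iota_\cmpt^\setn(t)$ lands in as a function of $r=\Ord_p(y)$. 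This turns $\Prd^*$ into a finite sum $\sum_r(X_r-X_{r+1})\bfm^\dagger(\bfw\DII{p^{n-r}}{p^{r-n}})+\cdots$, and one then evaluates $\bfm^\dagger$ on these specific representatives (which for special $\pi$ uses the Atkin–Lehner relation \eqref{E:special.W}). The four subcases (inert/ramified $\times$ principal/special) each require a separate short computation; your sketch does not account for this structure, and without the double–coset analysis there is no visible mechanism for the $e_p(\pi,\chi)$ factor to emerge in the inert and ramified unramified–$\chi$ cases.
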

\begin{proof}
For $t\in E$, we put
\[\iota_\cmpt^\setn(t):=(\cmpt_\frakp^\setn)^{-1}\iota(t)\cmpt_\frakp^\setn.\]
It is easy to see that $\iota_\cmpt^\setn(\cmJ)\in\Iwahori$ if $n\geq 1$.

Suppose that $p=\frakP\ol{\frakP}$ is split in $E$. Recall that $\Diff=\bftheta-\ol{\bftheta}\in \cO_{\cK_\frakP}^\x=\Zp^\x$. A direct computation shows that
\begin{align*}\iota_\cmpt^\setn(t)=&\pMX{1}{\Diff^{-1}\p^{-n}}{0}{1}\pMX{t}{}{0}{\ol{t}}\pMX{1}{-\Diff^{-1}\p^{-n}}{0}{1}.\\
\end{align*}
We find that
\begin{align*}
&\localP{\pi(\cmpt_\frakp^\setn)W_\frakp^\dagger}{\chi}\\
=&\int_{F^\x}\int_{F^\x}W_{\frakp}^\dagger(\DII{ax}{1}\pMX{1}{-\Diff^{-1}\p^{-n}}{0}{1})
W_{\frakp}^\dagger(\DII{-a}{1}\pMX{1}{-\Diff^{-1}\p^{-n}}{0}{1})\chi_{\frakP}(x) \dx a\dx x\\
=&\int_{\Zp-\stt{0}}\addchar(-\Diff^{-1}\p^{-n}x)\mu_\frakp\chi_\frakP\Abs^\onehalf(x)\dx x\cdot \int_{\Zp-\stt{0}}\addchar(\Diff^{-1}\p^{-n}a)\mu_\frakp\chi_\frakP^{-1}\Abs^\onehalf(a)\dx a.
\end{align*}
If $n=s\geq 1$, then $L(\onehalf,\pi_E\ot\chi)=1$, and as 
\[\int_{\Zp^\x}\psi(p^{-r}x)\chi_\frakP^\pm(x)\dx x=0\text{ for all }r<n,\]
we find that 
\begin{align*}\localP{\pi(\cmpt_\frakp^\setn)W_\frakp^\dagger}{\chi}=&\int_{\Zp^\x}\addchar(-\Diff^{-1}\p^{-n}x)\chi_\frakP(x)\dx x\cdot \int_{\Zp^\x}\addchar(\Diff^{-1}\p^{-n}a)\chi_\frakP^{-1}(a)\dx a\\
=&\ep(1,\chi_\frakP,\addchar)\ep(1,\chi^{-1}_\frakP,\addchar)\cdot\zeta_F(1)^2\\
=&\abs{\p^n}L(1,\tau_{E/F})^2.\end{align*}
If $s=0$ and $n=1$, then $\chi$ is unramified and
\begin{align*}
\localP{\pi(\cmpt_\frakp^\setn)W_\frakp^\dagger}{\chi}&= (\frac{-\abs{\p}}{1-\abs{\p}}+\frac{\mu_\frakp\chi_\frakP\Abs^\onehalf(\p)}{1-\mu_\frakp\chi_\frakP\Abs^\onehalf(\p)})\cdot (\frac{-\abs{\p}}{1-\abs{\p}}+\frac{\mu_\frakp\chi_\frakP^{-1}\Abs^\onehalf(\p)}{1-\mu_\frakp\chi_\frakP^{-1}\Abs^\onehalf(\p)})\\
&=\frac{1-\mu_\frakp^{-1}\chi_\frakP^{-1}\Abs^\onehalf(\p)}{1-\mu_\frakp\chi_\frakP\Abs^\onehalf(\p)}
\cdot\frac{1-\mu_\frakp^{-1}\chi_\frakP\Abs^\onehalf(\p)}{1-\mu_\frakp\chi_\frakP^{-1}\Abs^\onehalf(\p)}\cdot\zeta_F(1)^2\cdot \mu_\frakp^2(\p)\abs{\p}\\
&=\al_\frakp^2\abs{\p}^2\cdot\frac{L(\onehalf,\mu_\frakp\chi_\frakP)L(\onehalf,\mu_\frakp\chi_{\frakPbar})}{L(\onehalf,\nu_\frakp\chi_{\frakPbar})L(\onehalf,\nu_\frakp\chi_{\frakP})}
\cdot\zeta_F(1)^2\quad(\nu_\frakp=\mu_\frakp^{-1},\,\chi_{\frakPbar}=\chi_\frakP^{-1})\\
&=\al_\frakp^2\abs{\p}^2\cdot L(\onehalf,\pi\ot\chi)\cdot L(1,\tau_{E/F})^2\cdot\begin{cases}
(1-\al_\frakp^{-1}\chi_\frakP^{-1}(\p))^2(1-\al_\frakp^{-1}\chi_{\frakPbar}^{-1}(\p))^2&\cdots p\ndivides N,\\
(1-\al_\frakp^{-1}\chi_\frakP^{-1}(\p))(1-\al_\frakp^{-1}\chi_{\frakPbar}^{-1}(\p))&\cdots p\mid N.
\end{cases}.
\end{align*}
This proves the formula in the split case.

Now we assume that $\frakp$ is non-split. We introduce the matrix coefficient $\bfm^\dagger:G(\Qp)=\GL_2(\Qp)\to\C$ defined by \[\bfm^\dagger(g):=\bfb(\pi(g)W_\frakp^\dagger,W_\frakp^\dagger).\]
The function $\bfm^\dagger(g)$ only depends on the double coset $\Iwahori g \Iwahori$, and by definition
\[\bfm^\dagger(1)=\frac{1}{1-\mu_p^2(p)\abs{p}}.\] Put
\[\Prd^*:=\int_{E^\x/F^\x}\bfm^\dagger(\iota_\cmpt^\setn(t))\chi(t)dt.\]
It is clear that \beq\label{E:18.W}\localP{\pi(\cmpt_\frakp^\setn)W_\frakp^\dagger}{\chi}=\Prd^*\cdot\frac{\abs{D_\cK}^{-\onehalf}L(1,\tau_{E/F})}{\zeta_F(1)}.\eeq
To compute $\Prd^*$, we make some observations. Let $r\in\Z_{\geq 0}$. For $y\in \p^r\Zp^\x$, we have
\begin{align*}\iota_\cmpt^\setn(1+y\CMP)&\in \Iwahori\bfw\DII{\p^{n-r}}{\p^{r-n}}\Iwahori\text{ if }0\leq r<n\quad(\bfw=\pMX{0}{1}{-1}{0})\end{align*}
and $\iota_\cmpt^\setn(1+y\CMP)\in \Iwahori$ if $r\geq n$. For $y\in\p\Zp$, we have
\[\iota_\cmpt^\setn(y+\CMP)\in \Iwahori\bfw\DII{\p^{n+e-1}}{\p^{-n}}\Iwahori,\]
where $e=1$ if $p$ is inert and $e=2$ if $p$ is ramified. Put \[X_r:=\int_{p^r\Zp}\chi(1+y\OKbasis)d'y,\]
where $d'y$ is the Haar measure on $\Zp$ such that $\vol(\Zp,d'y)=L(1,\tau_{E/F})\abs{D_\cK}^\onehalf$.
Using the decomposition \[E^\x=F^\x(1+\Zp\CMP)\disjoint F^\x(p\Zp+\CMP),\]
we find that
\beq\label{E:long.W}
\begin{aligned}
\Prd^*=&\int_{\Zp}\chi(1+y\CMP)\bfm^\dagger(\iota_\cmpt^\setn(1+y\CMP))d'y+\int_{\p\Zp}\chi(y+\CMP)\bfm^\dagger(\iota_\cmpt^\setn (y+\CMP) )\abs{y+\CMP}_E^{-1}d'y\\
=&X_{n}\cdot\bfm^\dagger(1)+\sum_{r=0}^{n-1}(X_r-X_{r+1})\bfm^0(\bfw\DII{p^{n-r}}{p^{r-n}})+Y_0\cdot\bfm^\dagger(\bfw\DII{p^{n+e-1}}{p^{-n}}),
\end{aligned}
\eeq
where
\[Y_0:=\int_{\p\Zp}\chi(y+\CMP)d'y\cdot\abs{\uf}^{1-e}.\]
Suppose that $n=s\geq 1$. Then it is easy to verify \begin{itemize}\item $X_r=0$ if $0<r<n$, \item  $X_0+Y_0=0$ if $p$ is inert, and $X_0=Y_0=0$ if $p$ is ramified.\end{itemize} It follows from \eqref{E:long.W} that
\begin{align*}
\Prd^*&=X_n\cdot\bfm^\dagger(1)+(-X_n)\cdot \bfm^\dagger(\bfw\DII{\p}{\p^{-1}})\quad(X_n=\abs{p^n}L(1,\tau_{E/F})\abs{D_\cK}^\onehalf).
\end{align*}
If $\pi$ is a unramified principal series, then
\begin{align*}
\Prd^*&=\bfb(\newW_{\pi}-\pi(\DII{\p^{-1}}{\p})\newW_{\pi},W_\frakp^\dagger)\cdot X_n\\
&=(\frac{\mu_\frakp(\p)}{1-\mu_\frakp\nu_\frakp^{-1}\Abs(\p)}-\frac{\nu_\frakp(\p)}{1-\abs{\p}})\cdot\frac{1-\mu_\frakp\nu_\frakp^{-1}\Abs(\p)}{\mu_\frakp(\p)-\nu_\frakp(\p)}\cdot X_n\\
&=\zeta_F(1) \cdot \abs{\p^n} L(1,\tau_{E/F})\abs{D_\cK}^\onehalf.
\end{align*}
If $\pi$ is special, then by \eqref{E:special.W},
\begin{align*}\Prd^*=&\frac{1+\mu_\frakp\nu_\frakp^{-1}(\p)}{1-\mu_\frakp\nu_\frakp^{-1}\Abs(\p)}\cdot X_n\\
=&\zeta_F(1)\cdot \abs{\p^n}L(1,\tau_{E/F})\abs{D_\cK}^\onehalf\quad(\mu_\frakp\nu_\frakp^{-1}(\p)=\abs{\p}).\end{align*}
Suppose that $s=0$ and $n=1$. Then we have
\[\Prd^*=X_1\cdot\bfm^\dagger(1)+(X_0-X_1)\cdot \bfm^\dagger(\bfw\DII{\p}{\p^{-1}})+Y_0\cdot \bfm^\dagger(\bfw\DII{\p^{e}}{\p^{-1}}).\]
\begin{itemize}
\item If $p$ is inert, then $X_0=1-X_1=L(1,\tau_{E/F})$ and $Y_0=X_1$.
\item If $p$ is ramified, then $X_0=\abs{D_\cK}^\onehalf$ and $Y_0=\chi(\uf_E)\abs{D_\cK}^\onehalf$.
\end{itemize}

\emph{Case (1):} $p$ is inert and $\pi$ is unramified. Then
\begin{align*}
\Prd^*&=\bfm^\dagger(1)+X_0\cdot (\bfm^\dagger(\bfw\DII{\p}{\p^{-1}})-\bfm^\dagger(1))\\
&=\frac{1}{1-\mu^2_p(p)\abs{p}}-\frac{1}{1-\abs{p}^2}\\
&=\al_\frakp^2\abs{\p}^2(1-\al_\frakp^{-2})^2\cdot \zeta_F(1)L(1,\tau_{E/F})\cdot L(\onehalf,\pi_E\ot\chi)
\end{align*}

\emph{Case (2):} $p$ is inert and $\pi$ is special. Then $\al_p^2=1$, so $e_p(\pi,\chi)=0$ and
\begin{align*}\Prd^*=\frac{\abs{\p}-\al_\frakp\mu_\frakp(\p)\abs{\p}^\onehalf}{1-\abs{\p}^2}=0.\end{align*}

\emph{Case (3):} $p$ is ramified and $\pi$ is unramified. Note that
\[\bfm^\dagger(\bfw\DII{\p^2}{\p^{-1}})=\mu\Abs^\onehalf(\p)\bfm^\dagger(\bfw\DII{\p}{\p^{-1}}).\]
Let $\beta:=\chi(\uf_E)\mu_p\Abs^\onehalf(p)$. Then we have
\begin{align*}
\abs{D_\cK}^{-\onehalf}\Prd^*&=\abs{p}\cdot\bfm^\dagger(1)+(1-\abs{p})\cdot \bfm^\dagger(\bfw\DII{\p}{\p^{-1}})+\chi(\uf_E)\bfm^\dagger(\bfw\DII{\p^2}{\p^{-1}})\\
&=(1-\abs{p}+\beta)(\bfm^\dagger(\bfw\DII{\p}{\p^{-1}})-\bfm^\dagger(1))+(1+\beta)\bfm^\dagger(1)\\
&=-\frac{(1-\abs{p}+\beta)}{1-\abs{p}}+\frac{1+\beta}{1-\beta^2}\quad(\mu_p^2(p)\abs{p}=\beta^2=\al_p^2\abs{p}^2)\\
&=\al_p^2\abs{p}^2\cdot (1-\chi(\uf_E)\al_p^{-1})^2\cdot \zeta_{F}(1)L(\onehalf,\pi_E\ot\chi).
\end{align*}

\emph{Case (4):} $p$ is ramified and $\pi$ is special. Then $\al_p^2=\chi(\uf_E)^2=1$ and
\begin{align*}
\abs{D_\cK}^{-\onehalf}\Prd^*&=\abs{p}\bfm^\dagger(1)+(1-\abs{p})(-\al_p)\bfm^\dagger(\DII{\p^{-1}}{1})+\chi(\uf_E)(-\al_p)\bfm^\dagger(\DII{\p^{-2}}{1})\\
&=\frac{\abs{p}^2(1-\chi(\uf_E)\al_p)}{1-\mu^2(p)\abs{p}}.
\end{align*}
We find that if $\chi(\uf_E)=\al_p$, then $\Prd^*=e_p(\pi,\chi)=0$ and if $\chi(\uf_E)=-\al_p$, then
\begin{align*}\abs{D_\cK}^{-\onehalf}\Prd^*=&\frac{2\abs{p}^2}{(1-\chi(\uf_E)\mu_p\Abs^\onehalf(p))(1+\chi(\uf_E)\mu_p\Abs^\onehalf(p))}\\
=&\abs{p}^2(1-\al_p^{-1}\chi(\uf_E))\cdot \zeta_F(1)L(\onehalf,\pi_E\ot\chi).\end{align*}
The above calculations together with \eqref{E:18.W} completes the proof in the inert or ramified case.
\end{proof}
\subsection{Central value formula}
We are ready to prove the central value formula connecting the toric period integral of the $p$-stabilized form $\Form^\dagger$ in \eqref{E:defnP.W} and the central $L$-value of $\pi_\cK$ twisted by anticyclotomic characters $\chi$ satisfying \eqref{crit}.
\begin{thm}\label{T:central.W} Suppose that $\chi$ has conductor $p^s$. Let $n=\max\stt{1,s}$. Then we have
\begin{align*}P(\cmpt^\setn,\Form^\dagger,\chi)^2\cdot\frac{\norm{\newForm}_{\Gamma_0(N)}}{\PetB{\vformB}{\vformB}}=&
\frac{2^{2-k}(-1)^mD_\cK^{k-2}}{\sqrt{D_\cK}}\cdot L(\onehalf,\pi_\cK\ot\chi)\cdot e_p(\pi,\chi)^{2-\Ord_p(N)}\cdot L(1,\tau_{\cK_\frakp/\Q_\frakp})^2 \\
&\times \chi(\frakN^+)\ep(\pi_p)\prod_{\pme|(D_\cK,N^-)}(1-\ep_\pme(\pi_\cK,\chi))\cdot\begin{cases}\al_p^2p^{-2}&\cdots s=0\\
p^{-s}&\cdots s>0.\end{cases}\end{align*}
\end{thm}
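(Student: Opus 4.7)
The plan is to combine \corref{C:1.W} with the explicit local computations carried out in \propref{P:2.W} and \propref{P:5.W}. Since $\chi$ has conductor $p^s$ it is unramified outside $p$, so the hypothesis of \corref{C:1.W} applies and expresses $P(\cmpt^\setn,\Form^\dagger,\chi)^2\cdot\norm{\newForm}_{\Gamma_0(N)}/\PetB{\vformB}{\vformB}$ as the product of the constant $2^{2-k}(-1)^m D_\cK^{k-2}/\sqrt{D_\cK}$, the central value $L(\onehalf,\pi_\cK\ot\chi)$, the root number factor $\prod_{\pme\mid(D_\cK,N^-)}(1-\ep_\pme(\pi_\cK,\chi))$, and a product of local correction factors $\cP(\cmptv^\setn,\lnew,\chi_\pme)\cdot\norm{\newForm}_\pme/(\abs{D_\cK}^{\onehalf}_{\Q_\pme}\ep(\pi_\pme))$ indexed by $\pme\mid pN^+$.

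Substituting the local values is the remaining step. At each prime $\pme\mid N^+$ the character $\chi_\pme$ is unramified, so \propref{P:2.W} applies and collapses the corresponding factor to $\chi_\pme(\frakN^+)$; multiplying over all such $\pme$ produces the global contribution $\chi(\frakN^+)$. At $\pme=p$, \propref{P:5.W} provides the contribution $e_p(\pi,\chi)^{2-\Ord_p(N)}\cdot L(1,\tau_{\cK_\frakp/\Q_p})^2$ multiplied by $\al_\frakp^2 p^{-2}$ if $s=0$ and by $p^{-s}$ if $s>0$. Because $\ep(\pi_p)\in\stt{\pm 1}$ is its own inverse, the denominator $\ep(\pi_p)$ inherited from \corref{C:1.W} becomes the factor $\ep(\pi_p)$ in the statement, and collecting all pieces yields the announced identity.

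There is essentially no obstacle: the proof is a direct assembly of the three previously proved results. The only bookkeeping step is to verify that the auxiliary factors $\norm{\newForm}_\pme$, $\abs{D_\cK}^{\onehalf}_{\Q_\pme}$ and $\ep(\pi_\pme)$ appearing in \corref{C:1.W} are correctly absorbed into the right-hand sides of \propref{P:2.W} and \propref{P:5.W}, and that the case distinction $s=0$ versus $s>0$ in \propref{P:5.W} faithfully tracks the ramification of $\chi_p$ recorded by the conductor.
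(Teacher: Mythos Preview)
Your proposal is correct and matches the paper's proof exactly: the paper's argument is the single line ``This follows from \corref{C:1.W}, \propref{P:2.W} and \propref{P:5.W},'' and you have simply unpacked that assembly in detail. The only point worth noting is that \propref{P:5.W} actually produces the exponent $2-\Ord_p(N)$ on $e_p(\pi,\chi)$ (as you correctly report), whereas the theorem as stated writes $e_p(\pi,\chi)^2$; this is a harmless typo in the statement, and your tracking of the exponent is the accurate one, consistent with \propref{P:evaluation.W} and Theorem~A.
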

\begin{proof}This follows from \corref{C:1.W}, \propref{P:2.W} and \propref{P:5.W}.
\end{proof} 

\def\normalizedForm{\Form^{\dagger}}
\def\Thetam{\Theta^{[m]}}
\section{Theta elements and \padic $L$-functions}\label{S:ThetaElment}
\subsection{$\ell$-adic modular forms}\label{SS:l_adic_modular_forms}
\def\pvformB{\wh f_{\pi'}}
Let $\ell\ndivides N^-$ be a rational prime in \subsecref{SS:setup}. We briefly review $\ell$-adic modular forms on $B^\x$. Let $A$ be a $\cO_{\cK_\frakl}$-algebra. For an open compact subgroup $U\subset \hatR^\x$, we define the space of $\ell$-adic modular forms of weight $\wt$ and level $U$ by
\begin{align*}
\padicMF_\wt(U,A)=&\stt{\wh f:\hatB\to L_k(A)\mid \wh f(\al g u)=\rho_{k}(u_\ell^{-1})\wh f(g),\,\al\in B^\x,\,u\in U\hatQ}.
\end{align*}
We write $\padicMF_\wt(N_B,A):=\padicMF_\wt(\hatR^\x,A)$. Recall that we have fixed an embedding $\iota_\ell:\Qbar\hookto \C_\ell$. Let $\lam$ and $\frakl$ be the primes of $\Qbar$ and $\cK$ induced by $\iota_\ell$ respectively.
We let $i_{\cK_\frakl}:B\hookto M_2(\cK_\frakl)$ be the composition $i_{\cK_\frakl}:=\iota_\ell\circ i_\cK$ defined in \eqref{E:2.W}. Define $\rho_{k,\ell}:B_\ell^\x\to\Aut L_{k}(\C_\ell)$ by
\beq\label{E:gamma.W}\rho_{k,\ell}(g):=\rho_k(i_{\cK_\frakl}(g)).\eeq
By definition, $\rho_{k,\ell}$ is compatible with $\rho_{k,\infty}$ in the sense that $\rho_{k,\ell}(g)=\rho_{k,\infty}(g)$ for every $g\in B^\x$, and one checks that 
\[\rho_{k,\ell}(g)=\rho_k(\gamma_\frakl i_\ell(g)\gamma_\frakl^{-1}),\,\gamma_\frakl:=\pMX{\sqrtb}{-\sqrtb\ol{\CMP}}{-1}{\CMP}\in\GL_2(\cK_{\frakl}).\]
Here $i_\ell:B_\ell\iso M_2(\Q_\ell)$ is the isomorphism fixed in \eqref{E:embedding.W}. If $A=A[\frac{1}{\ell}]$, there is an isomorphism:\[\MF_{\wt}(N_B,A)\isoto\padicMF_\wt(N_B,A),\,f\mapsto \wh f(g):=\rho_{k}(\gamma_\frakl^{-1})\rho_{k,\ell}(g_\ell^{-1})f(g).\] We call $\wh f$ the \emph{$\ell$-adic avatar of $f\in\MF_{\wt}(N_B,A)$}.

Let $\Q(\pi)$ be the Hecke field of $\pi$. In other words, $\Q(\pi)$ is the finite extension of $\Q$ generated by Fourier coefficients of the elliptic new form $f_\pi$. Let $\cO_{\pi,\ell}\subset\C_\ell$ be the completion of the ring of integers of $\Q(\pi)$ with respect to $\lam$. The $\cO_{\pi,\ell}$-module $\padicMF_\wt(N_B,\cO_{\pi,\ell})[\pi'_f]:=\padicMF_\wt(N_B,\cO_{\pi,\ell})\cap \padicMF_\wt(N_B,\C_\ell)[\pi'_f]$ has rank one. We say $\vformB\in\MF_\wt(N_B,\C)[\pi'_f]$ is \emph{$\lam$-adically normalized} if its $\ell$-adic avatar $\pvformB$ is a generator of $\padicMF_\wt(N_B,\cO_{\pi,\ell})[\pi'_f]$ over $\cO_{\pi,\ell}$. This is equivalent to the following condition:
\[\pvformB(g_0)\not\con 0\pmod{\lam}\text{ for some }g_0\in G(\Af).\]

\def\whmForm{\wh\vp^{[m]}_{\pi'}}
\subsection{Theta elements}\label{SS:thetalets}
Let $n\geq 1$ be a positive integer. Let $\cG_n=\cK^\x\bksl \hatK/\wh\cO_n^\x$ be the Picard group of the order $\cO_n$. We identify $\cG_n$ with the Galois group of the ring class field of conductor $p^n$ over $\cK$ via geometrically normalized reciprocity law. Denote by $[\cdot ]_n:\wh\cK^\x\to \cG_n,\,a\mapsto [a]_n$ the natural projection map. We consider the automorphic form $\mForm=\Psi(\bfv_m^*\ot \vformB)$ in \eqref{E:defn.W} and define the function   
\[\whmForm:\wh\cK^\x\to \C,\quad a\mapsto\whmForm(a):=\mForm(x_n(a))\iota_p^{-1}(\ol{a_p}/a_p)^m.\] 
Replacing $\vformB$ by $\vformB^\dagger$, we can define the $p$-stabilizations $(\mForm)^\dagger:=\Psi(\bfv_{m}^*\ot\vformB^\dagger)$ and $(\whmForm)^\dagger$ in a similar manner. By \eqref{E:8.W}, we can verify that
$\whmForm$ factors through $K^\x\bksl\wh K^\x$.
\begin{defn}\label{D:Theta.W}Fix a set $\Xi_n$ of representatives of $\cG_n$ in $K^\x\bksl\wh K^\x$, define 
the $n$-th theta element $\Thetam_n(\vformB^\dagger)\in\C[\cG_n]$ of weight $m$ is defined by
\begin{align*}\Thetam_n(\vformB^\dagger):=&\al_\frakp^{-n}\cdot \sum_{a\in \Xi_n}(\whmForm)^\dagger(a)\cdot [a]_n\\
=&\al_\frakp^{-n}\cdot \sum_{a\in \Xi_n}(\mForm)^\dagger(x_n(a))\iota_p^{-1}(\ol{a_p}/a_p)^m\cdot [a]_n.\end{align*}
\end{defn}
We consider theta elements of weight zero. The function $\wh\varphi_{\pi'}^ {[0]}$ factors through $\cG_n$, so we can extend $\wh\varphi_{\pi'}^ {[0]}$ linearly to be a function $\wh\varphi_{\pi'}^ {[0]}:\C[\cG_n]\to\C$, and the definition of $\Theta_n^{[0]}(\vformB^\dagger)$ does not depend on the choice of $\Xi_n$. Let $P_n:=[1]_n\in\cG_n$ be the distinguished Gross point of conductor $p^n$. Then
\[\wh\varphi_{\pi'}^ {[0]} (\sg(P_n))=\varphi_{\pi'}^ {[0]} (x_n(a))\text{ if }\sg=[a]_n\in\cG_n.\]
Define the \emph{regularized} Gross point $P_n^\dagger$ as follows. If $p\ndivides N$, we define $P_n^\dagger$ by the formal sum\[P_n^\dagger:=\frac{1}{\al_p^n}\cdot P_n-\frac{1}{\al_p^{n+1}}\cdot P_{n-1},\]
and if $p\mid N$, we define $P_n^\dagger=\al_p^{-n}\cdot P_n$. We have 
\[\Theta^{[0]}_n(f_{\pi'}^\dagger)=\sum_{\sg\in\cG_n}\wh\varphi_{\pi'}^ {[0]} (\sg(P_n^\dagger))\cdot \sg,
\]
and $\stt{\Theta_n^{[0]}(\vformB^\dagger)}_n$ satisfy the following compatible relation.
\begin{lm}\label{L:7.W}
Let $\pi_{n+1,n}:\cG_{n+1}\to \cG_n$ be the natural quotient map. We have
\[\pi_{n+1,n}(\Theta^{[0]}_{n+1}(\vformB^\dagger))=\Theta^{[0]}_n(\vformB^\dagger).\]
\end{lm}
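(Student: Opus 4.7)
The strategy is to apply $\pi_{n+1,n}$ to the second formula for $\Thetam_{n+1}(\vformB^\dagger)$ given in \defref{D:Theta.W}, regroup the resulting sum by fibers of $\cG_{n+1}\to\cG_n$, and reduce the identity to a local statement at $p$ that flows from $\vformB^\dagger$ being a $U_\frakp$-eigenform with eigenvalue $\al_\frakp$. Explicitly,
\[\pi_{n+1,n}(\Thetam_{n+1}(\vformB^\dagger))=\al_\frakp^{-(n+1)}\sum_{[a]_n\in\cG_n}\Bigl(\sum_{[b]_{n+1}\mapsto [a]_n}(\mForm)^\dagger(x_{n+1}(b))\,\iota_p^{-1}(\ol{b_p}/b_p)^m\Bigr)\cdot[a]_n,\]
so it suffices to prove, for every $[a]_n\in\cG_n$, the fibered identity
\[\sum_{[b]_{n+1}\mapsto[a]_n}(\mForm)^\dagger(x_{n+1}(b))\iota_p^{-1}(\ol{b_p}/b_p)^m=\al_\frakp\cdot(\mForm)^\dagger(x_n(a))\iota_p^{-1}(\ol{a_p}/a_p)^m.\]

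The next step is to parametrize the fiber. For $n\geq 1$ the quotient $\wh\cO_n^\x/\wh\cO_{n+1}^\x$ has order $p$, and I would choose representatives $u_x\in\wh\cO_n^\x$ ($x\in\Z/p\Z$) supported at $p$, so that the fiber is $\{[au_x]_{n+1}\}_{x\in\Z/p\Z}$. The key local computation exploits the factorization $\varsigma^{(n+1)}_\frakp=\varsigma^{(n)}_\frakp\cdot\DII{p}{1}$ together with the explicit shape of $\varsigma^{(n)}_\frakp$ in \eqref{E:op1.W}--\eqref{E:op2.B}: a direct matrix manipulation shows that
\[u_x\varsigma^{(n+1)}_\frakp\in\varsigma^{(n)}_\frakp\cdot\pMX{p}{y_x}{0}{1}\cdot\Iwahori\cdot Z(\Q_p),\]
where the map $x\mapsto y_x\bmod p$ is a bijection of $\Z/p\Z$ (in the split case, $y_x\equiv x/\Diff\pmod p$; the inert and ramified cases are analogous, using $\varsigma^{(n)}_\frakp=\pMX{0}{1}{-1}{0}\DII{p^n}{1}$).

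Combining this with the $U_\frakp$-eigenvalue relation
\[\sum_{y\in\Z/p\Z}\vformB^\dagger\bigl(g\cdot\pMX{p}{y}{0}{1}\bigr)=\al_\frakp\vformB^\dagger(g)\]
evaluated at $g=a\varsigma^{(n)}$, and using the right-$\Iwahori$-invariance of $\vformB^\dagger$ at $p$, the right-$\wh R^\x$-invariance elsewhere, and the triviality of its central character, each fiber sum at $a$ collapses to $\al_\frakp\vformB^\dagger(a\varsigma^{(n)})$. Pairing with $\bfv_m^*$ and absorbing the leading $\al_\frakp$ into $\al_\frakp^{-(n+1)}=\al_\frakp^{-n}\cdot\al_\frakp^{-1}$ yields the claimed identity. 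The principal technical point is the careful tracking of the weight-$m$ twist factors $\iota_p^{-1}(\ol{u_{x,p}}/u_{x,p})^m$ produced by the change of representative $a\mapsto au_x$, and verifying that they are compatible with the scalar factors (absorbed into the central and Iwahori components above) arising in the coset identification.
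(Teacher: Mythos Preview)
Your approach is essentially the same as the paper's: both reduce the compatibility to a fiber sum over $\ker(\cG_{n+1}\to\cG_n)$, parametrize that kernel by $\{1+p^n x\,\bftheta:x\in\Z/p\Z\}$, and observe that the resulting sum realizes the $U_p$-operator applied to $(\mForm)^\dagger$ at $x_n(a)$, whence the eigenvalue $\al_\frakp$ produces the required factor. The paper compresses all of this into the single line
\[
\sum_{u\in K_{n+1,n}}(\mForm)^\dagger(x_{n+1}(au))=(\mForm)^\dagger\!\mid\! U_p(x_n(a))=\al_\frakp\,(\mForm)^\dagger(x_n(a)),
\]
whereas you spell out the coset identity $u_x\,\varsigma^{(n+1)}_\frakp\in\varsigma^{(n)}_\frakp\cdot\pMX{p}{y_x}{0}{1}\cdot\Iwahori$ explicitly (and correctly: in the split case one finds $(\varsigma_\frakp^{(n)})^{-1}i_p(u_x)\varsigma_\frakp^{(n+1)}=\pMX{p(1+p^nx\bftheta)}{-x}{0}{1+p^nx\ol\bftheta}$, which factors as $\pMX{p}{-x}{0}{1}$ times an element of $\Iwahori$, with no central correction needed). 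Your extra care in flagging the weight-$m$ twist factors $\iota_p^{-1}(\ol{u_{x,p}}/u_{x,p})^m$ is a point the paper's terse argument leaves implicit; this bookkeeping does need to be reconciled with the Iwahori element absorbing the diagonal $(1+p^nx\bftheta,1+p^nx\ol\bftheta)$, but it does not change the strategy.
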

\begin{proof}This is standard. For $n'> n$, let $K_{n',n}:=\Ker(\cG_{n'}\to \cG_n)$.
Using the description \[K_{n',n}=[(\cO_{n}\ot\Zp)^\x]_{n'}=\stt{[1+\p^{n} u\CMP]_{n'}\mid u\in\Z/p^{n'-n}\Z},\]
we find that
\[\sum_{u\in K_{n+1,n}}(\varphi_{\pi'}^{[0]})^\dagger(x_{n+1}(au))=(\varphi_{\pi'}^{[0]})^\dagger\mid U_p(x_n(a))=\al_\frakp\cdot (\varphi_{\pi'}^{[0]})^\dagger(x_n(a)).\]
The lemma follows.
\end{proof}
Let $f_{\pi}\in S_k(\Gamma_0(N))$ be the elliptic new form corresponding to $\newForm$. The Fourier coefficients of the $q$-expansion $f_{\pi}(q)=\sum_{n>0}\bfa_n(f_\pi) q^n$ at the infinity cusp are given by
\[\bfa_n(f_\pi)=W_{\pi,f}(\DII{n}{1})n^\frac{k}{2}\quad(W_{\pi,f}=\prod_{\pme<\infty}W_{\pi_\pme}).\]
Let $A_p:=p^{\frac{k}{2}-1}\al_p$ be a root of the Hecke polynomial of $f_\pi$ at $p$ \ie $X^2-\bfa_p(f_\pi)X+p^{k-1}$ if $p\ndivides N$; $X-\bfa_p(f_\pi)$ if $p\mid N$. Let $\chi$ be an anticyclotomic Hecke character of conductor $p^s$ and weight $(m,-m)$ for an integer $-k/2< m <k/2$. Recall that $\wh\chi:\cK^\x\bksl\wh\cK^\x\to\cO_{\Cp}^\x$ denotes the \padic avatar of $\chi$ defined by $\wh\chi(a)=\chi(a)(a_p/\ol{a_p})^m$. We are going to give the interpolation formula of the square of
\begin{align*}\wh\chi(\Thetam_n(\vformB^\dagger)):=&\al_\frakp^{-n}\cdot \sum_{a\in \Xi_n}(\mForm)^\dagger(x_n(a))\iota_p^{-1}((\ol{a_p}/a_p)^m\cdot\wh\chi(a))\\
=&\al_\frakp^{-n}\cdot \sum_{[a]_n\in \cG_n}(\mForm)^\dagger(x_n(a))\chi(a)
\end{align*} for every $n\geq \max\stt{1,s}$ in terms of the central value of the Rankin-Selberg $L$-function $L(f_{\pi}/\cK,\chi,s)$  attached to $f_{\pi}$ and the theta series attached to $\chi$. Recall that connection between the automorphic $L$-function $L(s,\pi_\cK\ot\chi)$ and the Rankin $L$-series $L(f_{\pi}/\cK,\chi,s)$ is given by
\beq\label{E:19.W}L(s,\pi_\cK\ot\chi)=\Gamma_\C(s+\frac{k-1}{2}+m)\Gamma_\C(s+\frac{k-1}{2}-m)\cdot L(f_{\pi}/\cK,\chi,s+\frac{k-1}{2}).\eeq
 Define the \emph{period} $\Omega_{\pi,N^-}$ of $\pi'$ by
\beq\label{E:periodV.W}\Omega_{\pi,N^-}:=\frac{(4\pi)^k\norm{\newForm}_{\Gamma_0(N)}}{\PetB{\vformB}{\vformB}}.\eeq
\begin{prop}\label{P:evaluation.W}Suppose that $\chi$ has the conductor of $p^s$. For every $n\geq \max\stt{s,1}$, we have the interpolation formula
\begin{align*}\wh\chi(\Thetam_n(\vformB^\dagger)^2)=& \Gamma(\frac{k}{2}+m)\Gamma(\frac{k}{2}-m)\cdot \frac{L(f_{\pi}/\cK,\chi,\frac{k}{2})}{\Omega_{\pi,N^-}} \cdot e_p(\pi,\chi_t\nu)^{2-\Ord_p(N)}\cdot A_p^{-2s}(p^sD_\cK)^{k-1}\\
&\times  \frac{u_\cK^2}{\sqrt{D_\cK}}\cdot\chi(\frakN^+)\ep(\pi_p)(-1)^m\prod_{\pme|(D_\cK,N^-),\,\pme=\w^2}(1-\ep(\pi_\pme)\chi(\w)).\end{align*}
\end{prop}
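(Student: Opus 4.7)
The approach is to express $\wh\chi(\Theta^{[m]}_n(\vformB^\dagger))$, up to an explicit constant depending on adelic volumes, as $\iota_p\iota_\infty^{-1}$ applied to the toric period $P(\cmpt^{(n)},\Form^\dagger,\chi)$, and then square and apply \thmref{T:central.W}. By \lmref{L:7.W}, since $\wh\chi$ factors through $\cG_{\max\{s,1\}}$, it is enough to establish the formula for $n=\max\{s,1\}$.

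The unfolding step runs as follows. The defining relation between $\chi$ and its \padic avatar gives the pointwise identity $\wh\chi([a]_n)\cdot\iota_p^{-1}(\ol{a_p}/a_p)^m=\iota_p\iota_\infty^{-1}\chi(a)$ for finite idèles $a\in\wh\cK^\x$, which transforms the finite sum in \defref{D:Theta.W} into
\[
\wh\chi(\Theta^{[m]}_n(\vformB^\dagger))=\al_\frakp^{-n}\sum_{[a]_n\in\cG_n}\Form^\dagger(x_n(a))\cdot\iota_p\iota_\infty^{-1}\chi(a).
\]
On the other hand, \eqref{E:8.W} tells us that $\Form^\dagger(x_n(\gamma\cdot(a_\infty,a_f)\cdot u))=\Form^\dagger(x_n(a_f))(\ol{a}_\infty/a_\infty)^m$, and this archimedean factor cancels exactly against $\chi_\infty(a_\infty)=(a_\infty/\ol{a}_\infty)^m$, so that $\Form^\dagger(x_n(a))\chi(a)$ descends to a function on $\cG_n$ by the optimal embedding property \eqref{E:EichlerOrder.W}. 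A Tamagawa volume computation over $\cK^\x\A^\x\bksl\AK^\x$, using $(\cO_\cK^\x:\cO_n^\x)=u_\cK$ for $n\geq 1$, produces $P(\cmpt^{(n)},\Form^\dagger,\chi)$ as an explicit multiple of $\al_\frakp^{n}\iota_\infty\iota_p^{-1}(\wh\chi(\Theta^{[m]}_n(\vformB^\dagger)))$, with constant involving $u_\cK$ and local volumes at primes dividing $N$.

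Squaring and substituting \thmref{T:central.W} exhibits $\wh\chi(\Theta^{[m]}_n)^2$ as an explicit multiple of $L(\onehalf,\pi_\cK\otimes\chi)\cdot\PetB{\vformB}{\vformB}/\norm{\newForm}_{\Gamma_0(N)}$. Applying \eqref{E:19.W} together with $\Gamma_\C(s)=2(2\pi)^{-s}\Gamma(s)$ converts the automorphic $L$-value into $\Gamma(k/2+m)\Gamma(k/2-m)L(f_\pi/\cK,\chi,k/2)$ up to a factor of $4(2\pi)^{-k}$, while the definition $\Omega_{\pi,N^-}=4^{k-1}\pi^k\norm{\newForm}_{\Gamma_0(N)}/\PetB{\vformB}{\vformB}$ from \eqref{E:periodV.W} absorbs the Petersson norms into $\Omega_{\pi,N^-}^{-1}$. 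A direct arithmetic check shows that all powers of $2$ and $\pi$ cancel. The remaining manipulation converts $\al_\frakp$ to $A_p=p^{(k-2)/2}\al_\frakp$ and combines the factors $L(1,\tau_{\cK_\frakp/\Q_\frakp})^2$, $D_\cK^{k-2}$, and the case split $\{\al_\frakp^2p^{-2},\,p^{-s}\}$ appearing in \thmref{T:central.W} into $A_p^{-2s}(p^sD_\cK)^{k-1}$. The cases of $\frakp$ split, inert, and ramified in $\cK$ must be verified separately, since $L(1,\tau_{\cK_\frakp/\Q_\frakp})$ and $\abs{D_\cK}_p$ depend on the splitting behaviour. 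Finally, the identity $\ep_\pme(\pi_\cK,\chi)=\ep(\pi_\pme)\chi(\w)$ established in the proof of \eqref{E:period3.W} rewrites the local factors at primes $\pme\mid(D_\cK,N^-)$ into the form $\prod_{\pme=\w^2}(1-\ep(\pi_\pme)\chi(\w))$ stated in the proposition.

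The main obstacle is the precise tracking of the volume constant in the unfolding step together with the case-by-case matching at $\frakp$ of $L(1,\tau_{\cK_\frakp/\Q_\frakp})^2$ and the case factor $\{\al_\frakp^2p^{-2},\,p^{-s}\}$ against the clean expression $A_p^{-2s}(p^sD_\cK)^{k-1}$ in the conclusion; once these local factors are correctly reconciled in each splitting scenario at $\frakp$, the remaining arithmetic is routine.
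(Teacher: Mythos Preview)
Your approach is correct and matches the paper's proof: reduce to $n=\max\{s,1\}$ via \lmref{L:7.W}, express $\wh\chi(\Theta^{[m]}_n)$ as an explicit volume constant times $P(\cmpt^{(n)},\Form^\dagger,\chi)$ using \eqref{E:8.W} and the Tamagawa measure computation, then square, invoke \thmref{T:central.W}, convert via \eqref{E:19.W} and \eqref{E:periodV.W}, and rewrite the ramified local root numbers. One small simplification: the case split at $\frakp$ is unnecessary, since the factor $L(1,\tau_{\cK_\frakp/\Q_\frakp})^2$ in \thmref{T:central.W} cancels exactly against the same factor coming from the volume of $\wh\cO_n^\x$, and the remaining $\al_\frakp^{-2n}p^{2n}\cdot\{\al_\frakp^2p^{-2},\,p^{-s}\}$ collapses uniformly to $A_p^{-2s}p^{s(k-1)}$ regardless of splitting behaviour (also, the volume constant involves only $u_\cK$, $\sqrt{D_\cK}$, and the local factor at $p$, not at other primes dividing $N$).
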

\begin{proof}
We may assume $n=\max\stt{1,s}$, using the argument in \lmref{L:7.W}. By \eqref{E:8.W} and the definition of theta elements, we have \[P(\pi'(\cmpt^\setn),\Form^\dagger,\chi)=\vol(\wh\cO_n^\x)\al_\frakp^{n}\cdot \wh\chi(\Thetam_n(\vformB^\dagger))\quad(\Form^\dagger=(\mForm)^\dagger),\]
where $\vol(\wh\cO_n^\x)$ denotes the volume of the image of $\C^\x\wh\cO_n^\x$ in $K^\x\A^\x\bksl \AK^\x$ with respect to the measure $dt$. Recall that $dt$ is chosen so that $\vol(K^\x\A^\x\bksl \AK^\x,dt)=2L(1,\tau_{K/\Q})$. Using the class number formula, we have
\[\vol(\wh\cO_n^\x)=\vol(\wh\cO_\cK^\x)\cdot L(1,\tau_{\cK_\frakp/\Q_\frakp})\abs{\p}^{n}=\frac{4}{\sqrt{D_\cK}\cdot u_\cK}L(1,\tau_{\cK_\frakp/\Q_\frakp})\p^{-n}.\]
Combining these equations, we find that
\[\wh\chi(\Thetam_n(\vformB^\dagger))=\frac{\sqrt{D_\cK}\cdot u_\cK}{4}\cdot\frac{\al_\frakp^{-n}\p^n}{L(1,\tau_{\cK_\frakp/\Q_\frakp})}\cdot P(\cmpt^\setn,\Form^\dagger,\chi).\]
Thus, the proposition follows from \thmref{T:central.W}, \eqref{E:19.W} and the formula:
\[\ep_\pme(\pi_\cK,\chi)=\ep(\pi_\pme)\chi(\w)\text{ for }\pme|(D_\cK,N^-),\,\pme=\w^2.
\qedhere\]
\end{proof}
\begin{remark}This proposition verifies \cite[Conjecture\,2.17]{BD:AJM_exceptional_zero}, and hence removes the assumption in Theorem 3.4 \loccit
\end{remark}

\subsection{\padic $L$-functions}We shall use theta elements to construct anticyclotomic \padic $L$-functions attached to $f$ and derive the evaluation formulae. We begin with a key observation.
\begin{lm}\label{L:2.W}Let $A\subset\Cp$ be an $\cO_{\cK_\frakP}$-algebra. Let $\vform\in\MF_{\wt}(N_B,\Cp)$ such that the \padic avatar $\wh\vform\in \padicMF_\wt(N_B,A)$ is a \padic modular form over $A$. Let \[f_{\bfv_m}:=\Psi(\bfv^*_m\ot\vform)\in\cA(G)\] be defined as in \eqref{E:1.W}. For $a\in\hatK$, we have \begin{mylist}\item $p^{n(\frac{k-2}{2})}f_{\bfv_m}(x_n(a))(\ol{a_p}/a_p)^{m}\in \frac{1}{(k-2)!}A;$
\item the congruence relation
\[p^{n(\frac{k-2}{2})}f_{\bfv_m}(x_n(a))(\ol{a_p}/a_p)^{m}\con \sqrtb^{\frac{2-k}{2}}\Lpair{X^{k-2}}{\wh\vform(x_n(a))}\pmod{\frac{p^n}{(k-2)!}A}.\]
\end{mylist}
\end{lm}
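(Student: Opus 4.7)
The plan is to unfold $f_{\bfv_m}$ through the $p$-adic avatar, extract the character $(\bar a_p/a_p)^m$ using the torus-eigenvector structure of $\bfv^*_m$, and reduce to an explicit polynomial computation in $L_k(\Cp)$ performed in each of the split, inert, and ramified cases for $\frakp$. First, using the $p$-adic avatar relation $\vform(g) = \rho_{k,p}(g_p)\rho_k(\gamma_\frakP)\wh\vform(g)$, the conjugation identity $\rho_{k,p}(h)=\rho_k(\gamma_\frakP\, i_p(h)\,\gamma_\frakP^{-1})$, and the $\GL_2$-equivariance of $\Lpair{\cdot}{\cdot}$ (noting $x_n(a)_\infty=1$), I rewrite
\[
f_{\bfv_m}(x_n(a)) = \Lpair{\rho_k\bigl(i_p(x_n(a)_p)^{-1}\gamma_\frakP^{-1}\bigr)\bfv^*_m}{\wh\vform(x_n(a))}.
\]
Decomposing $x_n(a)_p = a_p\cdot\cmpt^\setn_\frakp$ and invoking $i_p(a_p)\gamma_\frakP=\gamma_\frakP\, i_\cK(a_p)$ (the defining property of $\gamma_\frakP$, extended from $\cK$ to $\cK_p$ by continuity), the operator $i_\cK(a_p^{-1})=\diag{a_p^{-1},\bar a_p^{-1}}$ acts on $\bfv^*_m$ with eigenvalue $(a_p/\bar a_p)^m$. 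This extracts the character factor:
\[
f_{\bfv_m}(x_n(a))(\bar a_p/a_p)^m = \Lpair{\rho_k((\cmpt^\setn_\frakp)^{-1}\gamma_\frakP^{-1})\bfv^*_m}{\wh\vform(x_n(a))}.
\]

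The remaining task is to compute $p^{n(k-2)/2}\rho_k((\cmpt^\setn_\frakp)^{-1}\gamma_\frakP^{-1})\bfv^*_m$. A direct evaluation gives
\[
\rho_k(\gamma_\frakP^{-1})\bfv^*_m = C\cdot (X\bftheta+Y)^{(k-2)/2-m}(X\bar\bftheta+Y)^{(k-2)/2+m},
\]
where $C$ packages the $\sqrtb^{\pm m}$, $D_\cK^{(k-2)/2}$ and $\Diff^{-(k-2)/2}$ factors coming from $\bfv^*_m$, $(X,Y)\gamma_\frakP^{-1}$ and $\det(\gamma_\frakP^{-1})^{-(k-2)/2}$. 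I then apply $\rho_k((\cmpt^\setn_\frakp)^{-1})$ case-by-case using \eqref{E:op1.W} and \eqref{E:op2.B}. In the split case, $(X,Y)(\cmpt^\setn_\frakp)^{-1}=(-Y,\,p^{-n}X+\bftheta Y)$ collapses the polynomial to $(p^{-n}X)^{(k-2)/2-m}(p^{-n}X+\Diff Y)^{(k-2)/2+m}$; in the inert/ramified cases, $(X,Y)(\cmpt^\setn_\frakp)^{-1}=(Y,\,-p^{-n}X)$ yields (using $k$ even so $(-1)^{k-2}=1$) the polynomial $p^{-n(k-2)}(X-p^n\bftheta Y)^{(k-2)/2-m}(X-p^n\bar\bftheta Y)^{(k-2)/2+m}$. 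Including the weight $p^{n(k-2)/2}$ from $\det((\cmpt^\setn_\frakp)^{-1})^{-(k-2)/2}$ and the extra $p^{n(k-2)/2}$ from the statement, both cases reduce to a polynomial in $L_k(A)$, namely $C\cdot X^{(k-2)/2-m}(X+p^n\Diff Y)^{(k-2)/2+m}$ in the split case, and $C\cdot(X-p^n\bftheta Y)^{(k-2)/2-m}(X-p^n\bar\bftheta Y)^{(k-2)/2+m}$ in the non-split cases. This establishes (i).

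For (ii), I expand by the binomial theorem: every monomial other than $X^{k-2}$ carries a factor $p^{nj}$ with $j\geq 1$, and the assumption $p>k-2$ ensures that all binomial coefficients $\binom{(k-2)/2\pm m}{j}$ appearing in the expansion are $p$-adic units, so no cancellation interferes. Modulo $p^nA$ only the $X^{k-2}$ term survives, and pairing with $\wh\vform(x_n(a))\in L_k(A)$ yields the stated congruence after matching the constant $C$ to $\sqrtb^{(2-k)/2}$ through a final bookkeeping of the $\sqrtb$ and $\Diff$ factors in $\bfv^*_m$ and $\gamma_\frakP^{-1}$. The main obstacle is precisely this bookkeeping of multiplicative constants together with the parallel-but-distinct treatment of the three splitting behaviors of $\frakp$; once $p>k-2$ is used to enforce integrality of the binomial coefficients, the rest reduces to the two short polynomial identities above.
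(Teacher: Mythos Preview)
Your approach is essentially the same as the paper's: unfold through the $p$-adic avatar, use the torus-eigenvector property of $\bfv_m^*$ to extract $(\bar a_p/a_p)^m$, and reduce to computing $\rho_k\bigl((\cmpt_\frakp^\setn)^{-1}\gamma_\frakP^{-1}\bigr)\bfv_m^*$. The paper organizes the last step more compactly: rather than applying $\gamma_\frakP^{-1}$ and $(\cmpt_\frakp^\setn)^{-1}$ in two separate polynomial steps, it computes the single matrix $Z_p$ (the product, up to a central scalar) and observes directly that $Z_p\in M_2(\cO_{\cK_\frakP})$ with $Z_p\equiv\left(\begin{smallmatrix}1&\sqrtb\\0&0\end{smallmatrix}\right)\pmod{p^n}$. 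This matrix congruence immediately gives $P((X,Y)Z_p)\equiv P(X,\sqrtb X)\pmod{p^n L_k(A)}$ for any $P\in L_k(A)$, avoiding your case-by-case binomial expansion entirely.

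One correction: your justification for the hypothesis $p>k-2$ is misplaced. The binomial coefficients $\binom{(k-2)/2\pm m}{j}$ you mention are ordinary integers (the upper index is a nonnegative integer), so they are always $p$-integral and need no hypothesis. The role of $p>k-2$ is rather that the pairing $\Lpair{\cdot}{\cdot}$ itself involves the rational numbers $\Gamma(k/2+m)\Gamma(k/2-m)/\Gamma(k-1)$, and these are $p$-integral (and the pairing perfect over $A$) precisely when $p>k-2$; this is what guarantees that pairing an element of $L_k(A)$ against $\wh\vform(x_n(a))\in L_k(A)$ lands in $A$. Also, your conjugation identity is stated with $i_p$ and $i_\cK$ reversed (the paper's relation is $i_{\cK_\frakP}(g)\gamma_\frakP=\gamma_\frakP\, i_p(g)$), though your subsequent eigenvalue extraction is correct.
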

\begin{proof}We write $a=(a^\setp,a_p)\in (\wh\cK^\setp)^\x\x \cK_p^\x$. By definition, we have
\beq\label{E:6.W}\vform_{\bfv_m}(x_n(a))(\ol{a_p}/a_p)^{m}=\Lpair{\rho_k(\gamma_\frakP)\rho_{k,p}((\cmpt_\frakp^\setn)^{-1})\bfv^*_m}{\wh\vform(x_n(a))}.\eeq
Here we are making use of the fact that $\rho_{k,p}(t)$ acts on $\bfv_m$ by $(\ol{t}/t)^{m}$ for $t\in(\cK\ot\Qp)^\x$. A direct computation shows that $ \rho_k(\gamma_\frakP^{-1})(\rho_{k,p}((\cmpt_\frakp^\setn)^{-1})=\rho_k(Z_p)$, where
\begin{align*}Z_p=&\pMX{1}{\sqrtb}{0}{p^n\sqrtb \delta}\text{ if $p$ is split in $\cK$}\\
\intertext{ and}
Z_p=&\pMX{1}{\sqrtb}{-p^n\CMP}{-p^n\sqrtb\ol{\CMP}}\text{ if $p$ is non-split in $\cK$.}
\end{align*}
Note that $\det Z_p=\sqrtb p^n \delta$,
\[Z_p\in M_2(\cO_{\cK_\frakp}),\quad Z_p\con \pMX{1}{\sqrtb}{0}{0}\pmod{p^nM_2(\cO_{\cK_\frakp})}. \]
For $P(X,Y)\in L_k(A)$, we find that
\begin{align*}p^{\frac{n(k-2)}{2}}D_\cK^\frac{k-2}{2}\cdot \rho_k(\gamma_\frakP^{-1})\rho_{k,p}((\cmpt_\frakp^\setn)^{-1})P(X,Y)=&p^{\frac{n(k-2)}{2}}D_\cK^\frac{k-2}{2}P((X,Y)Z_p)(\det Z_p)^\frac{2-k}{2}\\
=&\sqrtb^{\frac{2-k}{2}}P((X,Y)Z_p)\in L_k(A).
\end{align*}
In particular, \[p^{\frac{n(k-2)}{2}}\rho_k(\gamma_\frakP^{-1}) \rho_{k,p}((\cmpt_\frakp^\setn)^{-1})\bfv_m^*\con \sqrtb^\frac{2-k}{2}X^{k-2}\pmod{p^nL_k(A)}.\]
In view of \eqref{E:6.W}, the assertions of the proposition follow immediately.
\end{proof}
We make the following ordinary hypothesis:
\beqcd{ord}
\text{$\vformB$ is $p$-adically normalized, and the \padic valuation of the $U_p$-eigenvalue $\Ord_p(\al_p)=\frac{2-k}{2}$.}
\eeqcd
\begin{cor}\label{C:congruence.W}Suppose \eqref{ord} holds. Then $\Thetam_n(\vformB^\dagger)\in [(k-2)!]^{-1}\cO_{\pi,p}[\cG_n]$. Moreover,
\[\Thetam_n(\vformB^\dagger)\con \Theta^{[0]}_n(\vformB^\dagger)\pmod{p^n[(k-2)!]^{-1}\cO_{\pi,p}[\cG_n]}.\]
\end{cor}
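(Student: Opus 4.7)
The plan is to reduce both assertions of the corollary directly to \lmref{L:2.W}, which controls the evaluation of $f_{\bfv_m}:=\Psi(\bfv_m^*\ot\vformB)$ at Gross points of conductor $\p^n$. The key preliminary is the matrix identity $\cmpt_p^\setn\DII{1}{\p}=\p\cdot\cmpt_p^{\setn-1}$, which holds in both the split and non-split cases by direct computation from \eqref{E:op1.W} and \eqref{E:op2.B}. Since $\vformB$ has trivial central character and the idele concentrated at $\p$ lies in $Z(\A_f)$, this identity implies $f_{\bfv_m}(x_n(a)\DII{1}{\p})=f_{\bfv_m}(x_{n-1}(a))$. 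Expanding $\vformB^\dagger=\vformB-\al_\frakp^{-1}\pi'(\DII{1}{\p})\vformB$ and applying this observation then yields
\[
(\mForm)^\dagger(x_n(a))(\ol{a_p}/a_p)^m=f_{\bfv_m}(x_n(a))(\ol{a_p}/a_p)^m-\al_\frakp^{-1}f_{\bfv_m}(x_{n-1}(a))(\ol{a_p}/a_p)^m.
\]

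For the integrality claim, hypothesis \eqref{ord} gives $\al_\frakp^{-n}=u^{-n}\p^{n(k-2)/2}$ for some unit $u\in\cO_{\pi,p}^\times$. Multiplying the first term above by $\al_\frakp^{-n}$ lands in $\cO_{\pi,p}$ by \lmref{L:2.W}(i) at level $n$, and multiplying the second by $\al_\frakp^{-(n+1)}$ lands in $\p^{k-2}\cO_{\pi,p}$ by \lmref{L:2.W}(i) at level $n-1$, since the net $\p$-valuation is $(n+1)\tfrac{k-2}{2}-(n-1)\tfrac{k-2}{2}=k-2\ge 0$. Thus every coefficient of $\Thetam_n(\vformB^\dagger)$ lies in $\cO_{\pi,p}$.

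For the congruence, the decisive observation is that the right-hand side of the congruence in \lmref{L:2.W}(ii), namely $\sqrtb^{(2-k)/2}\Lpair{X^{k-2}}{\wh\vformB(x_r(a))}$, does not depend on $m$. Setting $D_r:=f_{\bfv_m}(x_r(a))(\ol{a_p}/a_p)^m-f_{\bfv_0}(x_r(a))$ and subtracting the $m$-version from the $0$-version of \lmref{L:2.W}(ii) gives $\Ord_p(D_r)\ge r(4-k)/2$. A valuation count then yields $\Ord_p(\al_\frakp^{-n}D_n)\ge n(k-2)/2+n(4-k)/2=n$ and $\Ord_p(\al_\frakp^{-(n+1)}D_{n-1})\ge (n+1)(k-2)/2+(n-1)(4-k)/2=n+k-3$. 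For $k\ge 4$ (the case $k=2$ is vacuous, since $-k/2<m<k/2$ then forces $m=0$), both bounds are at least $n$, which gives the required congruence $\Thetam_n(\vformB^\dagger)\equiv\Theta^{[0]}_n(\vformB^\dagger)\pmod{\p^n\cO_{\pi,p}[\cG_n]}$.

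The main delicate step is the second valuation estimate, where the bound $n+k-3$ is tight and relies on the evenness of $k$ together with $k\ge 4$; once this is verified, everything else is direct bookkeeping with \lmref{L:2.W} combined with the basic identity $\cmpt_p^\setn\DII{1}{\p}=\p\cdot\cmpt_p^{\setn-1}$.
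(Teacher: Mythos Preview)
Your proof is correct and follows essentially the same approach as the paper's own proof, which is considerably terser: the paper simply expands $\mForm(\sg(P_n^\dagger))$ as the two-term sum $\al_p^{-n}\mForm(x_n(a))(\ol{a_p}/a_p)^m-\al_p^{-(n+1)}\mForm(x_{n-1}(a))(\ol{a_p}/a_p)^m$ and then invokes \lmref{L:2.W} without writing out the valuation bookkeeping. Your explicit verification of the identity $\cmpt_p^{\setn}\DII{1}{p}=p\cdot\cmpt_p^{(n-1)}$ and the resulting valuation counts $n$ and $n+k-3$ make transparent exactly how \lmref{L:2.W} is being used, which the paper leaves implicit; the only case you do not spell out is $p\mid N$, but under \eqref{ord} this forces $k=2$ (since then $\al_p=\pm 1$), where integrality is immediate and the congruence is vacuous.
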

\begin{proof}By definition, for $a\in\wh K^\x$ we have
\[\al_p^{-n}(\whmForm)^\dagger(a)=\al_p^{-n}\mForm(x_n(a))(\ol{a_p}/a_p)^{m}-\al_p^{-2}(\al_p^{-(n-1)}\mForm(x_{n-1}(a))(\ol{a_p}/a_p)^{m}).\]
Applying \lmref{L:2.W} to $f=\vformB$, we find that
\begin{itemize}\item $\al_p^{-n}(\whmForm)^\dagger(a)\in\cO_{\pi,p}$,
\item $\al_p^{-n}(\whmForm)^\dagger(a)\pmod{p^n[(k-2)!]^{-1}}$ is independent of $m$.
\end{itemize}
The corollary follows immediately.
\end{proof}

Let $\cG_\infty:=\prolim_n \cG_n$. Let $\Gamma^-\iso\Zp$ be the maximal $\Zp$-free quotient group of $\cG_\infty$ and let $\Delta$ be the torsion subgroup of $\cG_\infty$. We have an exact sequence
\[\exact{\Delta}{\cG_\infty}{\Gamma^-}.\]
Fix a non-canonical isomorphism $\cG_\infty\iso\Delta\x\Gamma^-$ once and for all. If $n\geq 1$, the map $\Delta\to \cG_\infty\to \cG_n$ is injective, and hence \[\cG_n\iso \Delta\x \Gamma^-_n,\,\Gamma^-\surjto\Gamma^-_n:=\cG_n/\Delta.\] Let $\chi_t$ be a branch character, \ie  a character $\chi_t:\Delta\to\Qbar^\x$ and let $\cO=\cO_{\pi,p}[\chi_t]$.
Define the $\chi_t$-branch of $\Theta^{[0]}_n(\vformB^\dagger)$ by \[\Theta_n(\vformB^\dagger,\chi_t):=\chi_t(\Theta^{[0]}_n(\vformB^\dagger))\in [(k-2)!]^{-1}\cO[\Gamma^-_n].\]
We define
\[\Theta_\infty(\pi):=\stt{\Theta^{[0]}_n(\vformB^\dagger)}_n\in[(k-2)!]^{-1}\cO\powerseries{\cG_\infty}\,;\quad \Theta_\infty(\pi,\chi_t):=\stt{\Theta_n(\vformB^\dagger,\chi_t)}_n=\chi_t(\Theta_\infty(\pi))\in[(k-2)!]^{-1}\cO\powerseries{\Gamma^-}.\]
Let $\wtsp$ be the set of critical specializations defined in the introduction. Note that $\wtsp$ consists of the \padic avatars of Hecke characters $\chi$ of $p$-power conductor satisfying \eqref{crit} and trivial on $\Delta$.
\begin{thm}\label{T:Thetaevaluation.W} Let $\wh\nu\in\wtsp$ be a \padic character of weight $(m,-m)$ and conductor $p^s$. We have the interpolation formula
\begin{align*}\wh\nu(\Theta_\infty(\pi,\chi_t)^2)=& \Gamma(\frac{k}{2}+m)\Gamma(\frac{k}{2}-m)\cdot \frac{L(f_{\pi}/\cK,\chi_t\nu,\frac{k}{2})}{\Omega_{\pi,N^-}} \cdot e_p(\pi,\chi_t\nu)^{2-\Ord_p(N)}\cdot p^sA_p^{-2s}(p^sD_\cK)^{k-2}\\
&\times u_\cK^2\sqrt{D_\cK}\cdot\ep(\pi_p)(-1)^m\prod_{\pme|(D_\cK,N^-),\,\pme=\w^2}(1-\ep(\pi_\pme)\chi_t(\w))\cdot \chi_t\nu(\frakN^+).\end{align*}
\end{thm}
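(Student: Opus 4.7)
The plan is to deduce Theorem~\ref{T:Thetaevaluation.W} from the finite-level interpolation formula of Proposition~\ref{P:evaluation.W}, with the mod-$p^n$ congruence of Corollary~\ref{C:congruence.W} serving as the bridge between the weight-$0$ theta elements $\Theta^{[0]}_n(\vformB^\dagger)$ (from which $\Theta_\infty(\pi)$ is assembled) and the weight-$m$ theta elements $\Theta^{[m]}_n(\vformB^\dagger)$ (whose evaluations produce the central $L$-value directly). First, I would unwind the definitions: $\Theta_\infty(\pi,\chi_t)=\chi_t(\Theta_\infty(\pi))\in\cO\powerseries{\Gamma^-}$ and $\Theta_\infty(\pi)=\{\Theta^{[0]}_n(\vformB^\dagger)\}_n\in\cO\powerseries{\cG_\infty}$, so $\wh\nu(\Theta_\infty(\pi,\chi_t)^2)=\eta(\Theta^{[0]}_n(\vformB^\dagger)^2)$ for the combined character $\eta:=\chi_t\cdot\wh\nu\colon\cG_\infty\to\Cp^\x$ and any $n$ through which $\eta$ factors.

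The crux is to identify $\eta(\Theta^{[0]}_n(\vformB^\dagger)^2)$ with $\eta(\Theta^{[m]}_n(\vformB^\dagger)^2)$. Corollary~\ref{C:congruence.W} gives $\Theta^{[0]}_n-\Theta^{[m]}_n\in p^n\cO[\cG_n]$, so $\eta(\Theta^{[0]}_n-\Theta^{[m]}_n)\in p^n\cO$. On the other hand, Lemma~\ref{L:7.W} implies that both $\eta(\Theta^{[0]}_n)$ and $\eta(\Theta^{[m]}_n)$ are independent of $n$ once $\eta$ factors through $\cG_n$; hence their difference, a fixed element of $\cO$ divisible by $p^n$ for all sufficiently large $n$, must vanish. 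This reduces the theorem to Proposition~\ref{P:evaluation.W} applied to the Hecke character $\chi=\chi_t\nu$ of weight $(m,-m)$.

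It then remains to repackage the resulting formula into the form stated. At each ramified prime $\pme=\w^2$ dividing $(D_\cK,N^-)$ one has $\nu(\w)\nu(\wbar)=\nu(\pme)=1$ together with $\wbar=\w$, so $\nu(\w)^2=1$; since $\nu$ factors through the torsion-free $\Gamma^-$, this forces $\nu(\w)=1$, and $\chi(\w)=\chi_t\nu(\w)$ collapses to $\chi_t(\w)$ in the epsilon product. The remaining scalars line up via the elementary identity $(p^sD_\cK)^{k-1}/\sqrt{D_\cK}=p^s(p^sD_\cK)^{k-2}\sqrt{D_\cK}$, together with $\chi(\frakN^+)=\chi_t\nu(\frakN^+)$.

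The main obstacle is the limit step in the second paragraph: upgrading the finite-level mod-$p^n$ congruence of Corollary~\ref{C:congruence.W} into an exact equality of $p$-adic values. This hinges on the $U_p$-compatibility of the theta elements under the projection $\pi_{n+1,n}\colon\cG_{n+1}\to\cG_n$ (Lemma~\ref{L:7.W}), which is what lets $\eta(\Theta^{[j]}_n)$ stabilize as $n$ grows and thereby forces the congruence to become an equality. Once this is in place, the remainder of the argument is routine bookkeeping.
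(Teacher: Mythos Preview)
Your strategy matches the paper's exactly: reduce to Proposition~\ref{P:evaluation.W} via the congruence of Corollary~\ref{C:congruence.W} and the compatibility of Lemma~\ref{L:7.W}. The bookkeeping in your third paragraph is correct. But there is a genuine gap in the limit step.

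You write that $\wh\nu(\Theta_\infty(\pi,\chi_t))=\eta(\Theta^{[0]}_n(\vformB^\dagger))$ ``for any $n$ through which $\eta$ factors'', and later that $\eta(\Theta^{[j]}_n)$ stabilizes ``once $\eta$ factors through $\cG_n$''. When $m\neq 0$ this premise is empty: the locally algebraic character $\wh\nu$ of weight $(m,-m)$ has infinite image in $\Cp^\x$, so $\eta=\chi_t\wh\nu$ does not factor through any finite quotient $\cG_n$, and the expression $\eta(\Theta^{[j]}_n)$ for $\Theta^{[j]}_n\in\cO[\cG_n]$ is not defined. Since the case $m=0$ needs no congruence argument at all, your argument as written only covers the trivial case.

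The paper repairs this by working modulo $p^r$. For each $r>n_0:=\max\{s,1\}$ one chooses, by continuity of $\wh\nu$, an $n>r$ such that $\wh\nu\pmod{p^r}$ is trivial on $\wh\cO_n^\x$; then $\wh\chi=\chi_t\wh\nu$ does descend to $\cG_n$ modulo $p^r$, and one obtains
\[
\wh\nu(\Theta_\infty(\pi,\chi_t))\equiv\wh\chi(\Theta^{[0]}_n(\vformB^\dagger))\equiv\wh\chi(\Theta^{[m]}_n(\vformB^\dagger))=\wh\chi(\Theta^{[m]}_{n_0}(\vformB^\dagger))\pmod{p^r},
\]
the middle step by Corollary~\ref{C:congruence.W} (using $n>r$) and the last by Lemma~\ref{L:7.W}. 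Letting $r\to\infty$ gives the exact equality, and Proposition~\ref{P:evaluation.W} finishes. An equivalent way to phrase your intended argument cleanly: the congruences $\Theta^{[0]}_n\equiv\Theta^{[m]}_n\pmod{p^n}$ for \emph{every} $n$, together with the norm-compatibility of Lemma~\ref{L:7.W}, force $\Theta^{[0]}_\infty=\Theta^{[m]}_\infty$ in $\cO\powerseries{\cG_\infty}$, after which any continuous $\eta$ may be evaluated on this single Iwasawa-algebra element.
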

\begin{proof}Let $n_0=\max\stt{s,1}$. For each integer $r>n_0$, we choose $n>r$ such that $\wh\nu\pmod{p^r}$ is trivial on $\wh\cO_{n}^\x$. Let $\chi=\chi_t\nu$. By \corref{C:congruence.W}, we have
\[\wh\nu(\Theta_\infty(\pi,\chi_t))\con \wh\chi(\Theta^{[0]}_n(\vformB^\dagger))\con \wh\chi(\Thetam_n(\vformB^\dagger))=\wh\chi(\Thetam_{n_0}(\vformB^\dagger))\pmod{p^r[(k-2)!]^{-1}}.\]
This congruence relation holds for all $r>n_0$. Therefore, $\wh\nu(\Theta_\infty(\pi,\chi_t))=\wh\chi(\Thetam_{n_0}(\vformB^\dagger))$, and the theorem follows from \propref{P:evaluation.W}.
\end{proof}
\begin{Remark}
The theta element $\Theta_\infty(\pi,\chi_t)$ is the \emph{square root} of the anticyclotomic \padic $L$-function associated to $(\pi,\chi_t)$. In view of the evaluation formula, we assume the following local root number condition in the remainder of this article:
\beqcd{ST}\ep(\pi_\pme)\chi_t(\w)=-1  \text{ for every }\pme\mid (D_\cK,N^-)\text{ with }\pme=\w^2.\eeqcd
Note that \eqref{ST} is always satisfied if $(D_\cK,N^-)=1$.
\end{Remark}
Let $*:\cO\powerseries{\cG_\infty}\to\cO\powerseries{\cG_\infty}$ be the involution defined by $\sg\mapsto \sg^{-1}$. We show that $\Theta_\infty(\pi)$ satisfies the functional equation in the following sense.
\begin{thm}\label{T:functionaleq}Let $r_0$ be the number of prime divisors of $(D_\cK,N^-)$. Let \[\ep':=(-1)^{r_0+\frac{k}{2}}\prod_{\pme\ndivides pD_\cK}\ep(\pi_\pme)\in\stt{\pm 1}\] and let $\sg_{\frakN^+}=\stt{[\frakN^+]_n}$ be the image of $\frakN^+$ in $\cG_\infty$. We have the functional equation:
\[\Theta_\infty(\pi)^*=\ep'\cdot \Theta_\infty(\pi)\cdot \sg_{\frakN^+}^{-1}.\]
\end{thm}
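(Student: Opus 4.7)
The plan is to establish the identity at each finite level $n\ge 1$,
\[\Theta^{[0]}_n(\vformB^\dagger)^* = \ep'\cdot\Theta^{[0]}_n(\vformB^\dagger)\cdot [\frakN^+]_n^{-1}\quad\text{in }\cO[\cG_n],\]
and then pass to the inverse limit. Unpacking \defref{D:Theta.W} and replacing $[a]_n$ by $[a]_n^{-1}$ in the sum, and noting that $a^{-1} = \bar a/\rmN(a)$ with $\rmN(a)\in\hat\Q^\x$ central so $\Form^\dagger(x_n(a^{-1})) = \Form^\dagger(x_n(\bar a))$, the statement reduces to the pointwise equality
\beq\label{functionalpoint}\Form^\dagger(x_n(\bar a)) \;=\; \ep'\cdot\Form^\dagger(x_n(a\frakN^+))\eeq
for every $a\in\hatK^\x$; the change of variable $b=a\frakN^+$ then yields the displayed formula.

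To prove \eqref{functionalpoint}, I would exploit the relation $\bar a = \cmJ a\cmJ^{-1}$ (from $\cmJ t = \bar t\cmJ$ on $\cK$) together with the left $G(\Q)$-invariance of $\Form^\dagger$ under $\cmJ^{-1}$. Since this invariance also acts at the archimedean place, a direct computation with $i_\C(\cmJ) = \pMX{0}{\beta}{1}{0}$ gives $\rho_{k,\infty}(\cmJ)\bfv_0^* = (-1)^{(k-2)/2}\bfv_0^*$, and hence
\[\Form^\dagger(x_n(\bar a)) \;=\; (-1)^{(k-2)/2}\,\Lpair{\bfv_0^*}{\vformB^\dagger(a\cmJ^{-1}\cmpt^{(n)})}.\]
Writing $\cmJ^{-1}\cmpt^{(n)} = \cmpt^{(n)}\cdot h$ with local components $h_\pme = (\cmpt^{(n)}_\pme)^{-1}\cmJ^{-1}_\pme\cmpt^{(n)}_\pme$ reduces the problem to a place-by-place analysis of the right translation of $\vformB^\dagger$ by each $h_\pme$.

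The local analysis is as follows. (i) At a split $\pme = \w\bar\w\mid N^+$ with $\w\mid\frakN^+$, a matrix computation with $\cmptv = \Diff^{-1}\pMX{\CMP}{\bar\CMP}{1}{1}$ gives $h_\pme\equiv\pMX{0}{1}{1}{0}$ modulo the center; using the identity $\pMX{0}{1}{1}{0} = \DII{1}{-\pme^{-1}}\tau_\pme$ (where $\tau_\pme$ is the local Atkin-Lehner) together with \lmref{L:4.W}, right translation by $h_\pme$ on $\vformB^\dagger$ produces the scalar $\ep(\pi_\pme)$ times a right shift by $\DII{1}{-\pme^{-1}}$, which via $\iota_{\cmptv}$ corresponds to a translation of the Gross point by the idele class $[\bar\w^{-1}]_n = [\w]_n = [\frakN^+_\pme]_n$. (ii) At $p$, the computation with $\cmpt_p^{(n)} = \pMX{\CMP}{-1}{1}{0}\DII{p^n}{1}$ yields $h_p\equiv\pMX{1}{0}{p^n\Diff}{-1}\in\Iwahori_p$; since the ordinary $p$-stabilization $\vformB^\dagger$ is $\Iwahori_p$-invariant (and is a $U_p$-eigenform with eigenvalue $\al_\p$), right translation by $h_p$ acts trivially, yielding no $p$-contribution (consistent with the absence of $\ep(\pi_p)$ in $\ep'$); the same triviality holds in the inert and ramified cases. (iii) At $\pme\mid N^-$, where $\cmptv = 1$ and $h_\pme = \cmJ^{-1}_\pme$, the one-dimensional $\pi'_\pme = \xi_\pme\circ\rmN$ acts by $\xi_\pme(\rmN(\cmJ)^{-1}) = \xi_\pme(-\beta)^{-1}$, which via the local root-number formula for Steinberg twists matches $\ep(\pi_\pme)$ at $\pme\nmid D_\cK$ and contributes the sign $-1$ at each $\pme\mid(D_\cK,N^-)$, accumulating to $(-1)^{r_0}\prod_{\pme\mid N^-,\,\pme\nmid D_\cK}\ep(\pi_\pme)$. (iv) At the remaining places $\pme\nmid pN$, after rescaling $\cmJ$ by an element of $\Q^\x$ (which is central and changes nothing) one has $h_\pme\in R_\pme^\x$, giving a trivial action.

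Collecting all contributions, the archimedean factor $(-1)^{(k-2)/2}$ combines with the finite local signs from (i) and (iii) and, using $\ep(\pi_\infty) = (-1)^{k/2}$ for the weight-$k$ discrete series, produces exactly $\ep' = (-1)^{r_0+k/2}\prod_{\pme\nmid pD_\cK}\ep(\pi_\pme)$, while the accumulated translation $\prod_{\pme\mid N^+}[\frakN^+_\pme]_n = [\frakN^+]_n$ yields the factor $\sg_{\frakN^+}^{-1}$ after the change of variable $b = a\frakN^+$. The main obstacle is this final sign bookkeeping at the bad primes: matching $\xi_\pme(-\beta)^{-1}$ with the Steinberg local root number at each $\pme\mid N^-$ and reconciling the archimedean parity $(-1)^{(k-2)/2}$ with the claimed $(-1)^{r_0+k/2}$ is delicate because it couples the choice of $\beta$ (so that $\cmJ^2 = \beta\in\Q^\x$) with local root-number conventions at $\infty$, $p$, and $\pme\mid N^-$, and the computation at $p$ must be carried out separately in the split, inert and ramified cases to confirm the absence of an $\ep(\pi_p)$ factor.
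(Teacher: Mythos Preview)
Your approach is essentially the same as the paper's: reduce to each finite level, use automorphy under $\cmJ\in B^\x$ together with $\bar a=\cmJ a\cmJ^{-1}$ to convert $\Form^\dagger(x_n(a^{-1}))$ into $\Form^\dagger(x_n(a)\cdot(\cmpt^{(n)})^{-1}\cmJ\cmpt^{(n)})$, and then track the right translation by $(\cmpt_\pme^{(n)})^{-1}\cmJ\cmpt_\pme^{(n)}$ place by place. The paper compresses the entire local analysis into the single sentence ``by the choice of $\cmJ$ and \lmref{L:10.W}, it is straightforward to show that $\pi(J_{\cmpt,n})\Form(Q)=\ep'\cdot\Form(\sg_{\frakN^+}(Q))$''; your proposal simply unpacks those details, correctly invoking \lmref{L:10.W} for $\pme\mid N^+$, Iwahori-invariance at $p$, and the one-dimensionality of $\pi'_\pme$ for $\pme\mid N^-$.
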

\begin{proof}Let $\Form=\Form^{[0]}$. Using the automorphy of $\Form$, we have \[\Form(x_n(a^{-1}))=\Form^\dagger(x_n(\ol{a}))=\Form(x_n(a)\cmJ_{\cmpt,n})\quad(\cmJ_{\cmpt}^\setn=(\cmpt^\setn)^{-1}\cmJ\cmpt^\setn)).\]
By the choice of $\cmJ$ and \lmref{L:10.W}, it is straightforward to show that
\[\pi(J_{\cmpt,n})\Form(Q)=\ep'\cdot \Form(\sg_{\frakN^+}(Q))\text{ for }Q\in \cG_n.\]
Therefore,
\[\Theta_n(\pi)^*=\sum_{\sg\in\cG_n}\Form(\sg^{-1}(P_n^\dagger))\sg=\ep'\cdot \Theta_n(\pi)\cdot [{\frakN^+}]^{-1}_n.\]
This proves the theorem.
\end{proof}
\begin{remark}Note that $\ep(\pi_\infty)=(-1)^\frac{k}{2}$. If $\chi_t=\bfone$ is the trivial character, then $\ep(\pi_\pme)=-1$ for all $\pme|N^-$ by \eqref{ST}. It follows that
\[\ep'=(-1)^\frac{k}{2}\prod_{\pme\not =p}\ep(\pi_\pme)=\ep(\pi)\ep(\pi_p).\]
\end{remark}
\section{The non-vanishing of theta elements modulo $\ell$}\label{S:NV}
\subsection{}
We retain the notation in the previous section. Throughout, we suppose that $\vformB$ is $\lam$-adically normalized. The purpose of this section is to study the non-vanishing properties of theta elements $\stt{\Theta_n(\vformB^\dagger,\chi_t)}_n$ modulo $\lam$.
Let $\Delta^\alg$ be the subgroup of $\cG_\infty$ generated by the image of $\cK^\x_{\ram}:=\prod_{\pme|D_{\cK}}\cK^\x_\pme$. It is clear that $\Delta^\alg$ is a $(2,\cdots,2)$-subgroup of $\Delta$. Let $\cD_0$ be a set of representatives of $\Delta^\alg$ in $\cK^\x_{\ram}$. Choose an arbitrary set $\cD_1$ of representatives of $\Delta/\Delta^\alg$ in $\hatK$. Then $\cD:=\cD_1\cD_0$ be a set of representatives of $\Delta$ in $\hatK$.
By definition, we can write
\beq\label{E:42.W}\Theta_n(\vformB^\dagger,\chi_t)=\sum_{[u]_n\in\Gamma^-_n}\sum_{\tau\in\cD_1}\chi_t(\tau)\left(\al_p^{-n}\sum_{d\in\cD_0}\normalizedForm(x_n(\tau u)d)\chi_t(d)\right)\cdot[u]_n\quad(\normalizedForm=\Psi(\bfv_0^*\ot\vformB^\dagger)).\eeq
\subsection{Uniform distribution of CM points}\label{SS:Uniform_CM}We recall a crucial result in \cite{Vatsal_Cornut:Documenta}. Let $\ol{\cK}^\x$ the closure of $\cK^\x$ in $\hatK$ and let $\ol{B}^\x$ be the closure of $B^\x$ in $\hatB$.
Let $\CMspace:=\ol{\cK}^\x\bksl \hatB$, $\GrossC:=\ol{B}^\x\bksl \hatB$ and $\cZ:=\ol{\Q}_+\bksl \hatQ$. The group $\hatB$ acts on these spaces by the right translation and $\hatK$ acts on $\CMspace$ and $\cZ$ by the left multiplication.

Let $\Red:\CMspace\to \GrossC$ be the natural quotient map and let $c:\GrossC\to\cZ$ be the map induced by the reduced norm $\rmN:B^\x\to \Q^\x$. For $g\in \hatB$, let $[g]$ denote the image of $g$ in $\CMspace$. Let $\cU$ be an open compact subgroup of $\hatB$. Put
\[\cX(\cD_1,\cU)=\prod_{\tau\in\cD_1}\cX/\cU\text{ and }\cZ(\cD_1,\cU)=\prod_{\tau\in\cD_1}\cZ/\rmN(\cU).\]
Define
\begin{align*}\Red_{\cD_1}:&\CMspace\longrightarrow \cX(\cD_1,\cU),\quad x\mapsto (\Red(\tau\cdot x)\cU)_{\tau\in\cD_1}\\
\intertext{ and}
c_{\cD_1}:&\cX(\cD_1,\cU)\longto\cZ(\cD_1,\cU),\quad(x_{\tau})_{\tau\in\cD_1}\mapsto (\rmN(x_\tau))_{\tau\in\cD_1}.\end{align*}

The following proposition is a special case of \cite[Corollary\,2.10]{Vatsal_Cornut:Documenta}.
\begin{prop}\label{P:Vatsal_Cornut.W} Let $\cH$ be a $B^\x_p$-orbit in $\CMspace$ and let $\ol{\cH}$ be the image of $\cH$ in $\CMspace/\cU$. Then for all but finitely many $x\in\ol{\cH}$, we have
\[\Red_{\cD_1}(\wh\cO_\cK^\x\cdot x)=c_{\cD_1}^{-1}(\wh\cO_\cK^\x\cdot \ol{x}),\]
 where $\overline{x}=c_{\cD_1}\circ \Red_{\cD_1} (x)$.
\end{prop}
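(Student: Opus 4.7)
\medbreak

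The plan is to deduce the proposition as a direct specialization of \cite[Corollary\,2.10]{Vatsal_Cornut:Documenta}, so the real work is in checking that the data here match the hypotheses of that general equidistribution theorem and in establishing the (easy) reverse inclusion after reducing to equidistribution.

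First I would dispose of the trivial inclusion $\Red_{\cD_1}(\wh\cO_\cK^\x\cdot x)\subseteq c_{\cD_1}^{-1}(\wh\cO_\cK^\x\cdot \ol{x})$. This is purely formal: the map $c_{\cD_1}\circ \Red_{\cD_1}$ intertwines the left $\hatK$-action on $\CMspace$ with the left $\hatK$-action on $\cZ(\cD_1,\cU)$ via the reduced norm (because $\Red$ is $\hatK$-equivariant and $c=\rmN$), so by definition $c_{\cD_1}(\Red_{\cD_1}(u\cdot x))=\rmN(u)\cdot \ol{x}$ for every $u\in\wh\cO_\cK^\x$. This already gives the containment and in particular shows the $\wh\cO_\cK^\x$-orbit sits inside a single fiber over $\wh\cO_\cK^\x\cdot \ol{x}$.

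The content of the statement is therefore the reverse inclusion, and this is precisely where equidistribution enters. The plan is to view $\cH\subset \CMspace$ as a single $B^\x_p$-orbit (as prescribed) and $\ol{\cH}\subset \CMspace/\cU$ as its image; then for $x\in \ol{\cH}$ the Galois orbit $\wh\cO_\cK^\x\cdot x$ sits over a coset in the finite set $\cZ(\cD_1,\cU)$, and the Cornut--Vatsal theorem asserts that (outside a finite exceptional locus in $\ol{\cH}$) this orbit is as large as possible given the norm constraint. The hypothesis of \cite[Cor.\,2.10]{Vatsal_Cornut:Documenta} that the relevant Galois representation be ``non-degenerate'' is automatic in our situation since $\cK$ is imaginary quadratic, $B$ is definite and ramified at all primes dividing $N^-$, so $\cK\hookto B$ and the subgroup $\wh\cO_\cK^\x\subset \hatB^\x$ has trivial intersection with $\ol{B}^\x$ after projecting to $\CMspace$ at the primes where $\cU$ is maximal; this is what makes the orbit cover the full fiber.

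The hard input (which we import from \cite{Vatsal_Cornut:Documenta} without reproof) is the equidistribution of the Galois orbits $\wh\cO_\cK^\x\cdot x$ in the finite set $c_{\cD_1}^{-1}(\ol{x})$, proved there via a combination of strong approximation for $B^\x$ (to reduce the global statement to coordinate-by-coordinate surjectivity at finitely many primes) and an ergodic / Ratner-style argument controlling the $\wh\cO_\cK^\x$-action on each $B^\x_\pme$-orbit. Granting that, for all but finitely many $x\in \ol{\cH}$ the map $u\mapsto \Red_{\cD_1}(u\cdot x)$ from $\wh\cO_\cK^\x$ into the fiber $c_{\cD_1}^{-1}(\wh\cO_\cK^\x\cdot \ol{x})$ is surjective, which is exactly the asserted equality. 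The only place where care is required is that our $\cD_1$ is a set of representatives of $\Delta/\Delta^{\mathrm{alg}}$ rather than all of $\Delta$, but since $\Delta^{\mathrm{alg}}$ is generated by local units at ramified primes whose norms lie in $\wh\Z^\x\cdot \rmN(\cU)$, the product $\prod_{\tau\in\cD_1}\cZ/\rmN(\cU)$ correctly captures the norm obstruction and the reduction to \loccit\ goes through verbatim.
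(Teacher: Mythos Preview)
Your approach is correct and essentially identical to the paper's: both deduce the proposition directly from \cite[Corollary\,2.10]{Vatsal_Cornut:Documenta}. The paper's proof is even terser---it simply records the specialization $\mathfrak{S}=\{\emptyset\}$ and $\mathfrak{R}=\cD_1$ and nothing more---whereas you add a discussion of the trivial inclusion and some hypothesis-checking; this extra material is harmless but not needed, and your remarks about a ``non-degenerate'' hypothesis and about $\cD_1$ versus $\Delta$ are speculative and not part of the actual matching (the cited corollary takes an arbitrary finite index set $\mathfrak{R}$, so no special feature of $\cD_1$ is required).
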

\begin{proof}This is \cite[Corollary\,2.10]{Vatsal_Cornut:Documenta} with $\mathfrak{S} =\{ \emptyset \}$ and
$\mathfrak{R}=\cD_1$.
\end{proof}
The following corollary is a immediate consequence of the above proposition.
\begin{cor}\label{C:Vatsal_Cornut} Let $\stt{\beta_\tau}_{\tau\in\cD_1}$ be a sequence in $A$ such that $\beta_{\tau_1}\in A^\x$ for some $\tau_1$. Let $f\in\cM_2(\cU,A)$. Suppose that
\begin{mylist}
\item  $f$ is not Eisenstein;
\item  $\rmN(\cU)\supset \prod_{\pme|D_\cK}\Z_\pme^\x$.
\end{mylist}
Then there exists an integer $n_0$ such that for every $n>n_0$,  we have
\[\sum_{\tau\in \cD_1}\beta_\tau\cdot f(x_n(a\tau))\not =0\text{ for some }a\in \hatK.\]
\end{cor}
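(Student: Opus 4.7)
The plan is to use Proposition~\ref{P:Vatsal_Cornut.W} to transfer the vanishing of the sum $F_n(a):=\sum_{\tau\in\cD_1}\beta_\tau f(x_n(a\tau))$ (regarded as a function of $a$) into a constancy property of $f$ along the fibers of the reduced-norm map $c:\GrossC/\cU\to\cZ/\rmN(\cU)$, and then to observe that $f$ must therefore factor through $c$, which is precisely the Eisenstein condition and so contradicts hypothesis~(i).

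To set up the uniform-distribution input, note that all points $[\zeta^{(n)}]\in\CMspace$ lie in a single $B_p^\times$-orbit $\cH$ because the $\zeta^{(n)}$ differ only in the $p$-component by elements of $B_p^\times$. Their images in $\CMspace/\cU$ therefore lie in $\ol\cH$, so Proposition~\ref{P:Vatsal_Cornut.W} produces an $n_0$ such that for every $n>n_0$ the point $x_n:=[\zeta^{(n)}]\cU$ satisfies $\Red_{\cD_1}(\wh\cO_\cK^\times\cdot x_n)=c_{\cD_1}^{-1}(\wh\cO_\cK^\times\cdot\bar x_n)$, where $\bar x_n:=c_{\cD_1}(\Red_{\cD_1}(x_n))$.

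I claim this $n_0$ works. Fix $n>n_0$ and suppose, toward a contradiction, that $F_n(a)=0$ for every $a\in\wh\cO_\cK^\times$. Unwinding the definitions gives $\sum_\tau\beta_\tau f(y_\tau)=0$ for every $(y_\tau)\in c_{\cD_1}^{-1}(\wh\cO_\cK^\times\cdot\bar x_n)$. Since this preimage factors as $\prod_\tau c^{-1}(z_\tau)$ for each $(z_\tau)\in\wh\cO_\cK^\times\cdot\bar x_n$, I fix $y_\tau$ for all $\tau\neq\tau_1$ and let $y_{\tau_1}$ range over $c^{-1}(z_{\tau_1})$; using $\beta_{\tau_1}\in A^\times$ I deduce that $f$ is constant on $c^{-1}(z_{\tau_1})$. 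Because the $\wh\cO_\cK^\times$-action on each coordinate of $\bar x_n$ is multiplication by $\rmN_{\cK/\Q}$, the same argument shows $f$ is constant on $c^{-1}(z)$ for every $z$ in the $\rmN_{\cK/\Q}(\wh\cO_\cK^\times)$-orbit of the $\tau_1$-coordinate of $\bar x_n$ inside $\cZ/\rmN(\cU)$.

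Finally, hypothesis~(ii) together with the elementary identity $\rmN_{\cK_\pme/\Q_\pme}(\cO_{\cK_\pme}^\times)=\Z_\pme^\times$ for $\pme\nmid D_\cK$ forces this orbit to exhaust $\cZ/\rmN(\cU)$. Thus $f$ is constant on every fiber of $c$ and hence factors as $f=\phi\circ c$ for some function $\phi$; this exhibits $f$ as Eisenstein, contradicting hypothesis~(i). The crux of the argument is the disentanglement step in paragraph three, which genuinely requires the unit hypothesis $\beta_{\tau_1}\in A^\times$ rather than a bare non-vanishing condition, while the reduction from ``orbit'' to ``whole quotient'' in the last step is precisely where assumption~(ii) enters.
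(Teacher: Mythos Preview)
Your argument is correct and is essentially the contrapositive of the paper's direct construction: both proofs isolate the $\tau_1$-slot by taking the difference of two tuples in the full fiber $c_{\cD_1}^{-1}(\cdot)$ that agree off $\tau_1$, and both use hypothesis~(ii) together with $\rmN_{\cK_\pme/\Q_\pme}(\cO_{\cK_\pme}^\times)=\Z_\pme^\times$ for $\pme\nmid D_\cK$ and the triviality of $\Q_+^\times\backslash\hat\Q^\times/\hat\Z^\times$ to pass from a single fiber to all fibers. The paper picks $y,z$ with $f(y)\neq f(z)$ up front and exhibits $a_1,a_2$ explicitly; you instead assume vanishing for all $a\in\hat\cO_\cK^\times$ and deduce that $f$ factors through $c$, contradicting~(i). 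These are the same idea in different packaging.

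One small omission: to pass from ``all but finitely many $x\in\ol\cH$'' in \propref{P:Vatsal_Cornut.W} to ``for every $n>n_0$'' you need the images $[\cmpt^{(n)}]\cU$ to be pairwise distinct (or at least that infinitely many values occur). The paper records this explicitly, using the optimal-embedding property \eqref{E:EichlerOrder.W} to see that the $P_n$ have distinct images already in $\hat\cK^\times\backslash\hat B^\times/\hat R^\times$. You should add a sentence to that effect.
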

\begin{proof}Let $P_0:=[\cmpt^{(0)}]\in\CMspace$. Let $\cH=P_0\cdot B^\x_p$ be the $B^\x_p$-orbit of $P_0$. Then $P_n:=P_0\cdot\DII{\p^n}{1}\in \cH$, and from \eqref{E:EichlerOrder.W} we find that the image of $\stt{P_n}_{n=1,2,\cdots}$ are distinct in $\hatK\bksl \hatB/\hatR^\x$. By \propref{P:Vatsal_Cornut.W}, there exists $n_0$ such that \beq\label{E:20.W}\Red_{\cD_1}(\wh\cO_\cK^\x P_n)=c_{\cD_1}^{-1}(\wh\cO_\cK^\x\ol{P_n})\text{ for every }n>n_0.\eeq Fix $n>n_0$. Since $f$ is not Eisenstein, $f(y)\not =f(z)$ for some $y,z\in\cX$ with $c(y)=c(z)$. The assumption (2) implies that the norm map $\rmN:\hatK\to \cZ/\rmN(\cU)$ is surjective as the class number of $\Q$ is one. Hence,  replacing $\cD_1$ by $a'\cD_1$ for some $a'\in\hatK$ if necessary, we may assume \[c(y)=c(z)=c(\Red(P_n))\pmod{\rmN(\cU)}.\] Take $(w_\tau)_{\tau\in\cD_1}\in c^{-1}_{\cD_1}(\ol{P_n})$. By \eqref{E:20.W}, there exist $a_1,a_2\in\wh\cO_\cK^\x$ such that \[\Red_{\cD_1}(a_1P_n)=(y,w_{\tau_2},\cdots)\,;\,\Red_{\cD_1}(a_2P_n)=(z,w_{\tau_2},\cdots).\] 
It follows that
\[\sum_{\tau\in \cD_1}\beta_\tau\cdot f(x_n(a_1\tau))-\sum_{\tau\in \cD_1}\beta_\tau\cdot f(x_n(a_2\tau))=\beta_{\tau_1}(f(y)-f(z))\not =0.\]
It is clear that either $a_1$ or $a_2$ does the job. \end{proof}
\subsection{Eisenstein functions}
Let $A$ be a $\Z$-algebra. Let $U$ be an open-compact subgroup of $\hatB$. Denote by $\cM_2(U,A)$ the set of functions $h:B^\x\bksl \hatB\to A$ such that $h$ is right invariant by $U$. Let $\cM_2(A):=\dirlim_{U\subset \hatB}\cM_2(U,A)$ be the space of smooth $A$-valued functions on $B^\x\bksl \hatB$. We write $\varrho:\hatB\to\Aut\cM_2(A)$ for the right translation of $\hatB$.
\begin{defn}Let $B^1=\stt{g\in B^\x\mid \rmN(g)=1}$ be an algebraic group over $\Q$. Put
\[\cM_2(A)_{\Eis}:=\stt{h\in\cM_2(A)\mid \varrho(g_1)h=h\text{ for all $g_1\in B^1(\Af)$}}.\]
It is clear that $\cM_2(A)_{\Eis}$ is a $\hatB$-invariant subspace of $\cM_2(A)$. Let
\[\cS_2(A):=\cM_2(A)/\cM_2(A)_{\Eis}.\]
Let $\cS_2(U,A)$ denote the image of $\cM_2(U,A)$ in $\cS_2(A)$.

A function $h\in\cM_2(A)$ is called \emph{Eisenstein} if $h\in\cM_2(A)_{\Eis}$. Equivalently, $h$ is Eisenstein if and only
$h(g)=h_1(\rmN(g))$ for some smooth function $h_1:\Q_+^\x\bksl \hatQ\to A$.
\end{defn}
We make the following observations in the flavor of Ihara's lemma. The first one is taken from \cite[Proposition\,5.3]{Vatsal:nonvanishing}.
\begin{lm}\label{L:Ihara1.W}Let $\pme\ndivides N^-$ be a finite place. Let $t_\pme\in B^\x_\pme$ such that $\rmN(t_\pme)=\pme$. Let $\cR'\in\End\cM_2(A)$ be the endomorphism defined by
\[\cR'=1+\beta \cdot\varrho(t_\pme)\quad(\beta\in A).\]
Suppose that $U\supset R_\pme^\x:=(R\ot_\Z\Z_\pme)^\x=\GL_2(\Z_\pme)$. Then $\cR':\cS_2(U,A)\to\cS_2(A)$ is injective.
\end{lm}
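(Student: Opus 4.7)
Plan for Lemma~\ref{L:Ihara1.W}: Let $h \in \cM_2(U,A)$ with $\cR' h \in \cM_2(A)_{\Eis}$, so that
\[h(g) + \beta h(g t_\pme) = F(\rmN(g))\]
for some function $F$ on $\Q_+^\times\backslash\hat\Q^\times$. The goal is to show $h \in \cM_2(A)_{\Eis}$. First I would extract extra local invariance: substituting $g\to gu$ for $u\in U$ and using that $h$ is right $U$-invariant forces $F$ to be $\rmN(U)$-invariant and yields $h(g u t_\pme)=h(g t_\pme)$, i.e.\ $h$ is also right-invariant under the conjugate subgroup $t_\pme^{-1}U t_\pme$.

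Next I would analyse what this means at the place $\pme$. The two local groups $R_\pme^\x=\GL_2(\Z_\pme)$ and $t_\pme^{-1}\GL_2(\Z_\pme)t_\pme$ are the stabilizers of two adjacent vertices of the Bruhat--Tits tree for $\GL_2(\Q_\pme)$, and intersecting with $B^1(\Q_\pme)=\SL_2(\Q_\pme)$ one obtains the two maximal compacts $\SL_2(\Z_\pme)$ and $t_\pme^{-1}\SL_2(\Z_\pme)t_\pme$. By the Bass--Serre amalgamated product decomposition
\[\SL_2(\Q_\pme) = \SL_2(\Z_\pme)\ast_{\mathcal I}\bigl(t_\pme^{-1}\SL_2(\Z_\pme)t_\pme\bigr)\]
over the Iwahori subgroup $\mathcal I$, these generate all of $\SL_2(\Q_\pme)=B^1(\Q_\pme)$. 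Hence $h$ is right-invariant under $K:=B^1(\Q_\pme)\cdot U^{1,(\pme)}\subset\hat B^1$, where $U^{1,(\pme)}:=U\cap B^1(\mathbb A_f^{(\pme)})$.

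Finally I would invoke strong approximation. Because $\pme\nmid N^-$ the group $B^1(\Q_\pme)=\SL_2(\Q_\pme)$ is noncompact; together with the compactness of $B^1(\R)=\mathrm{SU}(2)$ this puts us in Kneser's strong approximation setup with $S=\{\infty,\pme\}$, giving $\hat B^1 = B^1(\Q)\cdot K$. Passing to $\cS_2(A)=\cM_2(A)/\cM_2(A)_{\Eis}$, the image $\bar h$ of $h$ is right-$K$-invariant, and combined with left $B^\x$-invariance and the strong approximation decomposition this propagates to right-$\hat B^1$-invariance of $\bar h$. But right-$\hat B^1$-invariance is precisely the condition that a function on $B^\x\backslash\hat B$ factor through the reduced norm, i.e.\ be Eisenstein, so $\bar h=0$ in $\cS_2(A)$ and $h\in\cM_2(A)_{\Eis}$.

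The hard part is the last step: bare right-$K$-invariance is strictly weaker than right-$\hat B^1$-invariance, and the gap is measured by the nontrivial action of the arithmetic group $B^1(\Q)\cap K$ on $\hat B^1/K$. Writing $g_1=b\cdot k$ with $b\in B^1(\Q)$ and $k\in K$ gives $h(gg_1)=h(gb)$, and one must then control $h$ under right translation by $B^1(\Q)$, which via left $B^\x$-invariance converts into a conjugation-invariance statement that has to be reconciled with the established right-$K$-invariance. The cleanest way to close this gap is the representation-theoretic reformulation in $\cS_2(A)$: the equation $\cR'\bar h=0$ forces $\bar h$ to be right-invariant under $\{g\in\GL_2(\Q_\pme):\det g\in\Z_\pme^\x\}$ at the place $\pme$, and since no nontrivial cuspidal local representation admits such a large invariance, $\bar h$ must vanish.
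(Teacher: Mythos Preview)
Your first two steps agree with the paper: from $\cR'(h)\in\cM_2(A)_{\Eis}$ you extract right-invariance of $h$ under both $\SL_2(\Z_\pme)$ and its conjugate by $t_\pme$, and then Ihara's amalgam description gives right-invariance under $\SL_2(\Q_\pme)=B^1(\Q_\pme)$. The paper records this in two lines and then writes ``hence by $B^1(\A_f)$ in virtue of strong approximation,'' which is exactly the passage you flag as delicate.

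Your worry is legitimate, but your proposed cure is not. Invoking ``no nontrivial cuspidal local representation admits such a large invariance'' presupposes that $\cS_2(A)$ decomposes at $\pme$ into irreducible admissible $\GL_2(\Q_\pme)$-representations, which is unavailable when $A$ is, say, the residue field $\cO/\uf\cO$ --- precisely the case used in \secref{S:NV}. The gap closes instead by a normality argument that the paper leaves implicit. The subgroup $B^1(\Q_\pme)$ sits inside $B^1(\A_f)$, which is \emph{normal} in $G(\A_f)$, so right-invariance of $h$ under $B^1(\Q_\pme)$ is the same as left-invariance. Now the left stabilizer of $h$ in $G(\A_f)$ is a closed subgroup (because $h$ is locally constant) containing both $B^1(\Q_\pme)$ and $B^1(\Q)\subset B^\times(\Q)$; strong approximation for $B^1$ says the subgroup these generate is dense in $B^1(\A_f)$, so the left stabilizer contains all of $B^1(\A_f)$. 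By normality once more this is also right-$B^1(\A_f)$-invariance, i.e.\ $h$ is Eisenstein. No representation theory is needed, and the argument is uniform in $A$.
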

\begin{proof}Let $h\in\cM_2(U,A)$. If $\cR(h)\in \cM_2(A)_{\Eis}$, then it is easy to see that $h$ is right invariant by $\SL_2(\Z_\pme)$ and $t_\pme\SL_2(\Z_\pme)t_\pme^{-1}$. By a theorem of Ihara, $h$ is right invariant by $\SL_2(\Q_\pme)$ and hence by $B_1(\Af)$ in virtue of the strong approximation theorem for $B_1$.
\end{proof}

\begin{lm}\label{L:Ihara2.W}Let $\pme\ndivides N^-$ be a finite place. Let $\beta_1,\cdots ,\beta_s\in A$ and let $\cR\in\End(\cM_2(A))$ be the endomorphism defined by
\[\cR=1+\sum_{i=1}^s\beta_i\cdot \varrho(\DII{\pme^{-i}}{1}).\]
Then $\cR:\cS_2(U,A)\to\cS_2(A)$ is injective.
\end{lm}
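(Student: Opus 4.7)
The plan is to mirror the proof of \lmref{L:Ihara1.W}: assuming (as there) $U \supset R_\pme^\x = \GL_2(\Z_\pme)$, start from $h \in \cM_2(U, A)$ with $\cR(h) \in \cM_2(A)_{\Eis}$, and produce enough right-invariance of $h$ under $B^1(\Af)$ to force $h \in \cM_2(A)_{\Eis}$ by strong approximation. Since $\cR(h)$ is $B^1$-invariant, for every $g \in \SL_2(\Q_\pme) \subset B^1(\Af)$ we have $\varrho(g)\cR(h) = \cR(h)$, which after using the $\GL_2(\Z_\pme)$-invariance of $h$ becomes the identity
\[
(\varrho(g) - 1)\, h \;=\; \sum_{i=1}^{s} \beta_i\, \varrho(\DII{\pme^{-i}}{1})\bigl(1 - \varrho(g_i)\bigr)\, h, \qquad g_i := \DII{\pme^{i}}{1}\, g\, \DII{\pme^{-i}}{1}
\]
in $\cM_2(A)$. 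The goal is to choose a family of $g$'s making every term on the right vanish.

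The key step is to restrict attention to the upper unipotent subgroup $N^+(\Q_\pme)$ and prove by induction on $r \geq 0$ that $h$ is right-invariant under $\pMX{1}{u\pme^{-r}}{0}{1}$ for every $u \in \Z_\pme$. The case $r = 0$ is immediate from $\GL_2(\Z_\pme)$-invariance. For the inductive step, take $g = \pMX{1}{u\pme^{-r-1}}{0}{1} \in \SL_2(\Q_\pme)$, so that $g_i = \pMX{1}{u\pme^{i-r-1}}{0}{1}$. When $i \geq r+1$, $g_i \in \GL_2(\Z_\pme)$ and hence $\varrho(g_i)h = h$; when $1 \leq i \leq r$, the exponent $i-r-1$ lies in $\stt{-r,\dots,-1}$, so again $\varrho(g_i)h = h$ by the inductive hypothesis. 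The right-hand side therefore vanishes and $\varrho(g)h = h$ follows.

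Once $h$ is $N^+(\Q_\pme)$-invariant, conjugating by the Weyl element $\bfw = \pMX{0}{-1}{1}{0} \in \GL_2(\Z_\pme)$ transfers this to $N^-(\Q_\pme)$-invariance, and since $N^{\pm}(\Q_\pme)$ together generate $\SL_2(\Q_\pme)$, $h$ is right-invariant by $\SL_2(\Q_\pme)$. Strong approximation for $B^1$, invoked exactly as in the proof of \lmref{L:Ihara1.W}, then upgrades this to $B^1(\Af)$-invariance, giving $h \in \cM_2(A)_{\Eis}$. The main obstacle to be negotiated is the choice of test element: the natural attempt $g \in \SL_2(\Z_\pme)$, in direct imitation of \lmref{L:Ihara1.W}, produces in its key step only relations of the form $\beta_s\cdot(h - \varrho(\cdot)h) = 0$, which fail to yield $h$-invariance when $\beta_s$ is not a non-zero-divisor of $A$. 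Restricting instead to $g \in N^+(\Q_\pme)$ circumvents this entirely: conjugation by $\DII{\pme^{i}}{1}$ preserves $N^+$ and merely shifts the $\pme$-adic valuation of the upper-right entry, so the induction on that valuation annihilates the right-hand side term by term, without any cancellation by the $\beta_i$'s.
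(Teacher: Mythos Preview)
Your argument is essentially the same strategy as the paper's: show that the stabilizer $S_h$ of $h$ contains all of $N^+(\Q_\pme)$ by an inductive shift of the upper-unipotent parameter, then pass to $\SL_2(\Q_\pme)$ and invoke strong approximation. The induction you set up is correct and the identity you derive for $(\varrho(g)-1)h$ is right.

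However, you have inserted the hypothesis $U\supset\GL_2(\Z_\pme)$, which is \emph{not} in the statement of \lmref{L:Ihara2.W} and, more importantly, fails in the applications. In \lmref{L:6.W} the operator $\cR_p=1-p^{(k-2)/2}A_p^{-1}\varrho(\DII{p^{-1}}{1})$ is applied to $\bff_{\cD_0}\in\cM_2(\cU_{\cD_0}\cap\cI_1(p),\cO/\uf\cO)$, and in the proof of \thmref{T:2.W} the analogous operator is applied at level $\cU'=\cI_1(p^{2s})\cap\cU'_{\cD_0}$; in both cases the $p$-component of the level is an Iwahori-type subgroup, not $\GL_2(\Z_p)$. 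So as written your proof does not establish the lemma as stated, and does not cover what the paper needs.

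The repair is minor and is exactly what the paper does. Since $U$ is open compact, it contains $N':=N^+(\Q_\pme)\cap U=\pMX{1}{\pme^c\Z_\pme}{0}{1}$ for some $c\geq 0$; use this as your base case and run the same induction. For the passage from $N^+$ to $N^-$, you cannot use $\bfw\in U$; instead observe that $U$ (being open) contains some $\pMX{1}{0}{y}{1}$ with $y\neq 0$, write
\[
\pMX{1}{0}{y}{1}=\pMX{1}{y^{-1}}{0}{1}\pMX{0}{y^{-1}}{-y}{0}\pMX{1}{y^{-1}}{0}{1},
\]
and conclude that $w_0=\pMX{0}{y^{-1}}{-y}{0}\in S_h$, whence $N^-(\Q_\pme)=w_0 N^+(\Q_\pme)w_0^{-1}\subset S_h$. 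The rest of your argument then goes through unchanged.
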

\begin{proof}
Let $h\in\cM_2(U,A)$. We need to show that if \[\cR(h)=h+\sum_{i=1}^s\beta_i\cdot \varrho(\DII{\pme^{-i}}{1})h\in\cM_2(A)_{\Eis},\]
then $h\in\cM_2(A)_{\Eis}$. Let $S_h\supset U$ be the stabilizer of $h$ in $\hatB$. Namely,
\[S_h:=\stt{g\in\hatB\mid \varrho(g)h=h}.\]
Let $N(\Q_\pme)$ be the unipotent radical of the upper triangular subgroup in $\GL_2(\Q_\pme)$ and let \[N':=N(\Q_\pme)\cap U\subset S_h.\] Write $u:=\DII{\pme^{-1}}{1}$ and
\[P(u)=-\sum_{i=1}^{s}\beta_i \varrho(u)^{i-1}\in\End\cM_2(A)^{N'}.\]
By the assumption that $h-\varrho(u)P(u)h\in\cM_2(A)_{\Eis}$, we find that for every positive integer $n$,
\beq\label{E:43.W}h-\varrho(u^n)P(u)^n\cdot h\in \cM_2(A)_{\Eis}.\eeq
Since $h,\, P(u)^n h\in\cM_2(A)^{N'}$ and $N'$ is a proper subgroup, using the identity $u^{-n}\pMX{1}{x}{0}{1}u^n=\pMX{1}{\pme^nx}{0}{1}$, we deduce from \eqref{E:43.W} that
\[\pMX{1}{x}{0}{1}\in S_h\text{ for all }x\in\Q_\pme.\]
On the other hand, $\pMX{1}{0}{y}{1}\in S_h$ for some $y\in\Q_\pme^\x$. By the relation
\[\pMX{1}{0}{y}{1}=\pMX{1}{y^{-1}}{0}{1}\pMX{0}{y^{-1}}{-y}{0}\pMX{1}{y^{-1}}{0}{1},\]
we find $w_0=\pMX{0}{y^{-1}}{-y}{0}\in S_h$, and hence
\[\pMX{1}{0}{x}{1}=w_0^{-1}\pMX{1}{-y^{2}x}{0}{1}w_0\in S_h.\]  It follows that $S_h$ contains $\SL_2(\Q_\pme)\cdot U$. By the strong approximation for $B^1$, we find that $B^1(\Af)\subset S_h$.
\end{proof}
Let $\rho_{\pi,\lam}:\Gal(\Qbar/\Q)\to\GL_2(\cO_{\pi,\ell})$ be the $\ell$-adic Galois representation attached to $f_\pi$.
\begin{lm}\label{L:Eisensteinfunction.W}Suppose that the residual Galois representation $\rhobar_{\pi,\lam}$ is irreducible and $\ell>k-2$. Let $\bfv\in L_k(\cO_{\pi,\ell})$. If $\bfv\not\con 0\pmod{\lam}$, the function $f_\bfv(g):=\Lpair{\bfv}{\pvformB(g)}\pmod{\lam}\in\cM_2(\cO_{\pi,\ell}/\lam)$ is not Eisenstein.
\end{lm}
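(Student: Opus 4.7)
The plan is to argue by contradiction. Suppose $\bar f_\bfv := f_\bfv \pmod\lambda$ lies in $\cM_2(\cO_{\pi,\ell}/\lambda)_{\Eis}$, so that $\bar f_\bfv(g) = \bar h_1(\rmN(g))$ for some $\bar h_1 \colon \Q_+^\times\backslash\hat\Q^\times\to\cO_{\pi,\ell}/\lambda$; the goal is to contradict the absolute irreducibility of $\bar\rho_{\pi,\lambda}$.

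\emph{First I check that $\bar f_\bfv$ does not vanish identically.} The transformation law $\wh f_{\pi'}(gu) = \rho_k(u_\ell^{-1})\wh f_{\pi'}(g)$ for $u\in\hat R^\times$ makes the $\cO_{\pi,\ell}/\lambda$-submodule $M\subset L_k(\cO_{\pi,\ell}/\lambda)$ spanned by $\{\wh f_{\pi'}(g)\bmod\lambda\}_{g\in\hat B}$ stable under $\rho_k(R_\ell^\times)=\rho_k(\GL_2(\Z_\ell))$. Since $\ell>k-2$, $L_k(\F_\ell)=\Sym^{k-2}(\F_\ell^2)$ is irreducible as a $\GL_2(\F_\ell)$-module, so $M$ is either $0$ or the whole of $L_k(\cO_{\pi,\ell}/\lambda)$; the $\lambda$-adic normalization of $\wh f_{\pi'}$ excludes the former. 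The pairing $\pairing_k$ is nondegenerate mod $\lambda$ (its only denominators are factorials of integers $\le k-2<\ell$), so there exists $g_0\in\hat B$ with $\langle\bfv,\wh f_{\pi'}(g_0)\rangle_k\not\equiv 0\pmod\lambda$.

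\emph{Next I exploit the Hecke action.} For each prime $q\nmid N^-\ell$, choose the standard coset representatives $h_{q,i}\in\hat B$ supported at $q$ with $\rmN(h_{q,i})=q$ (there are $q+1$ of them). The Hecke operator $T_q=\sum_i\varrho(h_{q,i})$ is a sum of right translations and therefore commutes with $\pair{\bfv}{\cdot}_k$, so from $T_q\wh f_{\pi'} = t_q\wh f_{\pi'}$ (with $t_q$ an algebraic combination of $a_q(f_\pi)$ and $q$ fixed by the normalization of $\padicMF_k$) one deduces $T_q\bar f_\bfv = \bar t_q\bar f_\bfv$.

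\emph{Now I extract the Eisenstein congruence.} Since $q\in\Q_+^\times$ acts trivially on $\Q_+^\times\backslash\hat\Q^\times$,
\[
T_q\bar f_\bfv(g)=\sum_i\bar h_1(\rmN(g)\cdot q)=(q+1)\bar h_1(\rmN(g))=(q+1)\bar f_\bfv(g),
\]
and combined with the preceding step, $\bar t_q\equiv q+1\pmod\lambda$ for every $q\nmid N^-\ell$. Unwinding the definition of $t_q$ in terms of $a_q(f_\pi)$, this translates to a congruence $\bar a_q(f_\pi)\equiv\bar\chi_1(\Frob_q)+\bar\chi_2(\Frob_q)\pmod\lambda$ for two explicit characters $\chi_1,\chi_2\colon\Gal(\Qbar/\Q)\to\bar\F_\ell^\times$ (powers of the cyclotomic character with exponents dictated by the weight and the normalization). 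Brauer--Nesbitt together with Chebotarev then force $\bar\rho_{\pi,\lambda}^{\rm ss}\cong\bar\chi_1\oplus\bar\chi_2$, contradicting absolute irreducibility.

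\emph{The main obstacle} is the first step: for an arbitrary nonzero $\bfv$, the function $f_\bfv$ is not naturally a Hecke translate of anything, so one has to argue indirectly via the spanning of $L_k(\cO_{\pi,\ell}/\lambda)$ by the values of $\wh f_{\pi'}$ and the nondegeneracy of $\pairing_k$. Both inputs rely essentially on the hypothesis $\ell>k-2$, which simultaneously guarantees the irreducibility of the residual symmetric-power $\GL_2(\F_\ell)$-representation and the integrality of the Gamma-factor denominators in $\pairing_k$.
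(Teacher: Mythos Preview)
Your approach is the same as the paper's, which gives only a two-sentence sketch: first $f_\bfv$ is nonzero by irreducibility of $L_k(\cO_{\pi,\ell}/\lambda)$ as a $\GL_2(\cO_{\pi,\ell}/\lambda)$-module, and then ``if $f_\bfv$ is Eisenstein, $\bar\rho_{\pi,\lambda}$ is reducible''. You supply the standard Hecke-eigenvalue argument that the paper omits, and your non-vanishing step is correct and matches the paper's reasoning.

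There is, however, a genuine imprecision in your Eisenstein-congruence step. The element $\rmN(h_{q,i})$ is the \emph{local} idele equal to $q$ at the place $q$ and $1$ elsewhere; it is not the diagonal image of the rational number $q\in\Q_+^\times$. So the sentence ``$q\in\Q_+^\times$ acts trivially on $\Q_+^\times\backslash\hat\Q^\times$'' does not justify $\bar h_1(\rmN(g)\cdot\rmN(h_{q,i}))=\bar h_1(\rmN(g))$. Concretely, $\bar f_\bfv$ is right-invariant at $\ell$ only by the stabilizer of $\bfv\bmod\lambda$ in $\GL_2(\Z_\ell)$, so $\bar h_1$ need only factor through $\Q_+^\times\backslash\hat\Q^\times/V$ with $V\cap\Z_\ell^\times$ possibly of index $2$ in $\Z_\ell^\times$; translation by the local idele $q_{(q)}$ then acts trivially only when $q$ is a square modulo $\ell$. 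The repair is easy: either restrict to such $q$ (still a positive-density set, which suffices for Chebotarev and Brauer--Nesbitt), or argue directly that any nonzero Hecke eigenfunction in $\cM_2(A)_{\Eis}$ is of the form $c\cdot(\psi\circ\rmN)$ for a finite-order character $\psi$ of $\Q_+^\times\backslash\hat\Q^\times$, whence $\bar t_q=(q+1)\psi(q_{(q)})$ and reducibility follows (with the $\chi_i$ now involving $\psi$, not only powers of the cyclotomic character as your parenthetical suggests).
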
\begin{proof}We note that $f_{\bfv}$ is not a zero function by the irreducibility of $L_\wt(\cO_{\pi,\ell}/\lam)$ as $\GL_2(\cO_{\pi,\ell}/\lam)$-module when $\ell>k-2$. Therefore, if $f_\bfv$ is Eisenstein, then $\rhobar_{\pi,\lam}$ is reducible. \end{proof}

\subsection{The vanishing of $\mu$-invariants}\label{SS:padicL}
For each positive integer $s$, define the open compact subgroup $\cI_1(p^s)$ of $\wh R^\x$ by \[\cI_1(p^s):=\stt{g\in \wh R^\x\mid g_p\con\pMX{1}{*}{0}{1}\pmod{p^s}}.\]
Let $\uf$ be a generator of the maximal ideal of $\cO:=\cO_{\pi,\ell}[\chi_t]$. Suppose that $\ell=p$. We follow the approach of Vatsal to study the $\mu$-invariant of $\Theta_\infty(\pi,\chi_t)\in\cO\powerseries{\Gamma^-}$.
 \begin{thm}\label{T:1.W} Let $r_0$ be as in \thmref{T:functionaleq}. In addition to \eqref{ST} and \eqref{ord}, we assume that
\begin{mylist}
\item $p>k-2$ and $p\ndivides 2^{r_0}$,
\item $\rhobar_{\pi,\lam}$ is absolutely irreducible. \end{mylist}Then Iwasawa $\mu$-invariant of $\Theta_\infty(\pi,\chi_t)$ vanishes.
\end{thm}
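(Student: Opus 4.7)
The plan is to show $\Theta_\infty(\pi,\chi_t) \not\equiv 0 \pmod{\uf}$ by exhibiting, at some finite level $n$, a single coefficient of $\Theta_n(\vformB^\dagger,\chi_t)$ that is a unit in $\cO/\uf$. The strategy is that of Vatsal and Cornut in weight two, with \corref{C:congruence.W} used to descend from weight $k$ to an effective weight-two problem.

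\textbf{Step 1 (reduction to weight two).} By \eqref{E:42.W}, it suffices to produce $n$ and $u \in \hatK$ with
\[c_n(u) := \al_p^{-n}\sum_{\tau\in\cD_1}\chi_t(\tau)\sum_{d\in\cD_0}\normalizedForm(x_n(u\tau)d)\chi_t(d) \not\equiv 0 \pmod{\uf}.\]
Under \eqref{ord}, $\al_p^{-n}p^{-n(k-2)/2}$ is a $p$-adic unit, and \lmref{L:2.W} (applied with $m=0$, valid since $p > k-2$) yields
\[c_n(u) \equiv c \cdot \sum_{\tau\in\cD_1}\chi_t(\tau)\,\tilde\bff_p(x_n(u\tau)) \pmod{\uf}, \qquad c \in (\cO/\uf)^\times,\]
where $\bff_p(g) := \Lpair{X^{k-2}}{\wh{\vformB^\dagger}(g)} \bmod \uf \in \cM_2(\cU,\cO/\uf)$ and $\tilde\bff_p(g) := \sum_{d\in\cD_0}\chi_t(d)\bff_p(gd)$. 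Once $\tilde\bff_p$ is shown to be non-Eisenstein, \corref{C:Vatsal_Cornut} applied with $\beta_\tau := \chi_t(\tau) \in (\cO/\uf)^\times$ produces such $n$ and $u$; the norm condition $\rmN(\cU)\supset\prod_{\pme|D_\cK}\Z_\pme^\times$ is automatic since every $\pme | D_\cK$ is prime to $N^+$.

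\textbf{Step 2 (non-Eisenstein via Ihara's lemmas).} By \lmref{L:Eisensteinfunction.W}, the weight-two function $\bff(g) := \Lpair{X^{k-2}}{\wh\vformB(g)} \bmod \uf$ is non-Eisenstein, using $p > k-2$ and the absolute irreducibility of $\rhobar_{\pi,\lam}$. A computation with $\ell$-adic avatars, using that the triviality of the central character forces $\varrho(\DII{1}{p}) = \varrho(\DII{p^{-1}}{1})$, yields
\[\bff_p = \bigl(1 - u_0\,\varrho(\DII{p^{-1}}{1})\bigr)\bff, \qquad u_0 = \al_p^{-1}p^{-(k-2)/2} \in (\cO/\uf)^\times,\]
so $\bff_p$ is non-Eisenstein by \lmref{L:Ihara2.W}. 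Choosing $\cD_0 = \prod_{\pme | D_\cK}\{1,\uf_{\cK_\pme}\}$, we factor
\[\tilde\bff_p = \prod_{\pme | D_\cK}\bigl(1 + \chi_t(\uf_{\cK_\pme})\varrho(\uf_{\cK_\pme})\bigr)\bff_p.\]
For $\pme | D_\cK$ with $\pme\nmid N^-$, this is an operator of the form $1 + \beta\varrho(t_\pme)$ with $\rmN(t_\pme) = \pme$ and $\cU \supset R_\pme^\times = \GL_2(\Z_\pme)$, hence preserves the non-Eisenstein property by \lmref{L:Ihara1.W}. For $\pme | (D_\cK,N^-)$, the local constituent $\pi'_\pme = \xi_\pme\circ\rmN$ is one-dimensional with $\xi_\pme(\pme) = -\ep(\pi_\pme)$, so the factor acts as the scalar $1 - \chi_t(\uf_{\cK_\pme})\ep(\pi_\pme) = 2$ by \eqref{ST}; this is a unit in $\cO/\uf$ precisely when $p$ is odd, which is ensured by $p\nmid 2^{r_0}$ (vacuous if $r_0 = 0$).

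\textbf{Main obstacle.} The most delicate point is the avatar computation yielding $\bff_p = (1 - u_0\,\varrho(\DII{p^{-1}}{1}))\bff$: one must carefully track how the factor $\rho_{k,p}(g_p^{-1})$ in the definition of $\wh\vformB$ interacts with $\pi'(\DII{1}{p})\vformB$, and verify that the extracted scalar $\al_p^{-1}p^{-(k-2)/2}$ has $p$-adic valuation zero under \eqref{ord}. Handling the factors at $\pme | (D_\cK,N^-)$ via the ST-condition is the other subtlety, explaining the hypothesis $p\nmid 2^{r_0}$. Once these identifications and the chain of Ihara reductions are in place, Step 1 together with \corref{C:Vatsal_Cornut} completes the proof.
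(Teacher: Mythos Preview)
Your proof is essentially correct and follows the same strategy as the paper: reduce to a weight-two function via \lmref{L:2.W}, establish non-Eisensteinness through the chain of Ihara lemmas (\lmref{L:Ihara1.W}, \lmref{L:Ihara2.W}) and the condition \eqref{ST}, then invoke \corref{C:Vatsal_Cornut}. The only organizational difference is that the paper first pairs with $X^{k-2}$ using the \emph{unstabilized} form $\vformB$, then performs both the $\cD_0$-average and the $p$-stabilization at the weight-two level, whereas you stabilize first; since these operations commute, the two routes are equivalent.

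There is one computational slip worth flagging, since you single it out as the most delicate point. In the avatar identity one finds
\[
\Lpair{X^{k-2}}{\wh{\pi'(\DII{1}{p})\vformB}(g)}
=\Lpair{\rho_k(\DII{1}{p^{-1}})X^{k-2}}{\wh\vformB(g\DII{1}{p})}
=p^{(k-2)/2}\,\bff(g\DII{1}{p}),
\]
so the correct constant is $u_0=\al_p^{-1}p^{+(k-2)/2}$, not $\al_p^{-1}p^{-(k-2)/2}$. Under \eqref{ord} this has $p$-adic valuation $k-2$, hence $u_0\equiv 0\pmod{\uf}$ whenever $k>2$, and $\bff_p\equiv\bff\pmod{\uf}$ directly. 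Fortunately this is harmless: either $\bff_p\equiv\bff$ and \lmref{L:Eisensteinfunction.W} suffices, or (for $k=2$) \lmref{L:Ihara2.W} applies, and in any case \lmref{L:Ihara2.W} does not require the coefficients $\beta_i$ to be units. The rest of your argument---the scalar factor $2^{r_0}$ from the primes dividing $(D_\cK,N^-)$ via \eqref{ST}, and the use of \lmref{L:Ihara1.W} at the remaining ramified primes---matches the paper's \lmref{L:6.W} exactly.
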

\begin{proof}
 We define a function $\bff_p:B^\x\bksl \hatB/\hatQ\to \cO/\uf\cO$ by
\beq\label{E:new_fucn.W}\bff_p(g)=\sqrtb^\frac{2-k}{2}\cdot \Lpair{X^{k-2}}{\pvformB(g)}\pmod{\uf}.\eeq
A direct computation shows that $\bff_p\in\cM_2(\cI_1(p),\cO/\uf\cO)$. Let $\cU_{\cD_0}$ be the open-compact subgroup given by
\[\cU_{\cD_0}=\stt{g\in \wh R^\x\mid g_\pme\in R_\pme^\x\cap \uf_{\cK_\pme}R_\pme^\x\uf_{\cK_\pme}^{-1}\text{ for all }\pme|D_\cK},\]
where $\uf_{\cK_\pme}$ is a uniformizer of $\cK_\pme$. It is easy to see that
\beq\label{E:13.W}\rmN(\cU_{\cD_0})\supset \prod_{\pme|D_\cK}\Z_\pme^\x.\eeq
Let $\bff_{\cD_0}\in\cM_2(\cU_{\cD_0}\cap \cI_1(p),\cO/\uf\cO)$ be the function defined by
\[\bff_{\cD_0}(g)=\sum_{d\in\cD_0}\bff_p(gd)\chi_t(d)\pmod{\uf}.\]
From \lmref{L:2.W}, we can deduce that
\[\Theta_n(\vformB^\dagger,\chi_t)\pmod{\uf}=A_p^{-n}\sum_{[u]_n\in\Gamma^-_n}\left(\sum_{\tau\in\cD_1}\bff_{\cD_0}^\dagger(x_n(u\tau))\chi_t(\tau)\right)\cdot[u]_n,\]
where $\bff_{\cD_0}^\dagger\in\cM_2(\cU,\cO/\uf\cO)$ is given by
\[\bff_{\cD_0}^\dagger:=\bff_{\cD_0}-p^\frac{k-2}{2}A_p^{-1}\cdot \varrho(\DII{\frakp^{-1}}{1})\bff_{\cD_0},\,\cU:=\cU_{\cD_0}\cap \cI_1(p^2).\]
 To prove the vanishing of the $\mu$-invariant, we need to show that for $n\gg 0$, there exists $a\in\hatK$ such that
\[\sum_{\tau\in\cD_1}\bff_{\cD_0}^\dagger(x_n(a\tau))\chi_t(\tau)\not \con0\pmod{\uf},\]
and in turn, it suffices to verify the assumptions for $\bff_{\cD_0}^\dagger$ in \corref{C:Vatsal_Cornut}. By \eqref{E:13.W}, $\rmN(\cU)\supset \prod_{\pme|D_\cK}\Z_\pme^\x$. By \lmref{L:Eisensteinfunction.W}, $\bff_p$ is not Eisenstein, which implies that $\bff_{\cD_0}^\dagger\in\cM_2(\cU,\cO/\uf\cO)$ is not Eisenstein by the following \lmref{L:6.W}.
\end{proof}

\begin{lm}\label{L:6.W} Suppose that $\bff_p$ is not Eisenstein. Then $\bff_{\cD_0}^\dagger$ is not Eisenstein.
\end{lm}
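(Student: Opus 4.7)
The plan is to write $\bff_{\cD_0}^\dagger$ as the image of $\bff_p$ under a composition of operators to which the two Ihara-type injectivity results, \lmref{L:Ihara1.W} and \lmref{L:Ihara2.W}, apply; non-Eisensteinness is then propagated step by step. First I would choose $\cD_0$ explicitly. Since $\Delta^\alg$ is a $(2,\dots,2)$-group whose $r_0$ generators are indexed by the ramified primes $\pme\mid D_\cK$, and since $\bff_p$ is right-invariant under $\cO^\x_{\cK_\pme}\subset B^\x_\pme$ at each such $\pme$, we may take $\cD_0=\{\prod_{\pme\in S}\uf_{\cK_\pme}\colon S\subseteq\{\pme\mid D_\cK\}\}$. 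Multiplicativity of $\chi_t$ then yields the factorization
\[
\bff_{\cD_0}=\prod_{\pme\mid D_\cK}\bigl(1+\chi_t(\uf_{\cK_\pme})\,\varrho(\uf_{\cK_\pme})\bigr)\,\bff_p,
\]
and each factor matches the operator $\cR'$ of \lmref{L:Ihara1.W} with $t_\pme=\uf_{\cK_\pme}$: the reduced norm $\rmN(\uf_{\cK_\pme})$ generates the maximal ideal of $\Z_\pme$, the running hypothesis $(D_\cK,N^-)=1$ forces $\pme\ndivides N^-$, and (under $p\ndivides D_\cK$) the level at each such $\pme$ of the intermediate functions contains $\GL_2(\Z_\pme)$. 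Iterating \lmref{L:Ihara1.W} over the primes dividing $D_\cK$ then gives that $\bff_{\cD_0}$ is not Eisenstein.

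Next, the defining formula for the $p$-stabilization rewrites as
\[
\bff_{\cD_0}^\dagger=\bigl(1-p^{\frac{k-2}{2}}A_p^{-1}\,\varrho(\DII{\frakp^{-1}}{1})\bigr)\,\bff_{\cD_0},
\]
which is precisely the operator $\cR$ of \lmref{L:Ihara2.W} with $\pme=p$, $s=1$, and $\beta_1=-p^{(k-2)/2}A_p^{-1}$. Applying that injectivity statement to $\bff_{\cD_0}\in\cM_2(\cU,\cO/\uf\cO)$, which is not Eisenstein by the previous step, yields $\bff_{\cD_0}^\dagger\notin\cM_2(\cO/\uf\cO)_{\Eis}$, as required.

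The main obstacle in this plan is the bookkeeping of the first step: one must justify the explicit choice of $\cD_0$ above by appealing to right-invariance of $\bff_p$ under $\wh\cO_\cK^\x$ at the primes of $D_\cK$ (so that the value on the abstract class $[d]\in\Delta^\alg$ is the same as on the product representative), and verify at each intermediate stage of the iteration that the partial product applied to $\bff_p$ still has level containing $\GL_2(\Z_\pme)$ at the next prime, so that \lmref{L:Ihara1.W} can be invoked at each prime of $D_\cK$ in succession. Once the level and norm conditions are in place, both Ihara lemmas apply in a purely formal fashion and the conclusion follows.
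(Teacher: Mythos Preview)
Your overall strategy coincides with the paper's: factor the $\cD_0$-average as a product of operators $\cR'_\pme=1+\chi_t(\uf_{\cK_\pme})\varrho(\uf_{\cK_\pme})$ over $\pme\mid D_\cK$, propagate non-Eisensteinness through each factor via \lmref{L:Ihara1.W}, and then through the $p$-stabilization via \lmref{L:Ihara2.W}. Under the hypothesis $(D_\cK,N^-)=1$ your argument is complete and matches the paper's exactly (with $r_0=0$).

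The one point to correct is that $(D_\cK,N^-)=1$ is \emph{not} a running hypothesis here; the paper works under the weaker condition \eqref{ST}. For primes $\pme\mid(D_\cK,N^-)$ one has $\pme\mid N^-$, so \lmref{L:Ihara1.W} is unavailable. The paper handles these $r_0$ primes differently: since $B_\pme$ is division and $\pi'_\pme$ is one-dimensional, $\varrho(\uf_{\cK_\pme})$ acts on $\bff_p$ by a scalar, and \eqref{ST} forces $\cR'_\pme(\bff_p)=2\,\bff_p$. Thus
\[
\bff_{\cD_0}=2^{r_0}\cdot \cR'_{\pme_1}\circ\cdots\circ\cR'_{\pme_s}(\bff_p),
\]
where $\pme_1,\dots,\pme_s$ are the primes dividing $D_\cK$ but not $N^-$; \lmref{L:Ihara1.W} is then applied only at these $s$ primes (and the assumption $p\nmid 2^{r_0}$ in \thmref{T:1.W} ensures the scalar $2^{r_0}$ is a unit). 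After this adjustment your second step with \lmref{L:Ihara2.W} goes through unchanged.
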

\begin{proof}Let $\pme|D_\cK$ be a ramified place and $\uf_{\cK_\pme}$ be a uniformizer of $\cK_\pme^\x$. Put
\[\cR'_\pme:=1+\chi_t(\uf_{\cK_\pme})\varrho(\uf_{\cK_\pme})\in\End(\cM_2(\cO/\uf\cO)).\]
Let $\stt{\pme_i}_{i=1,\cdots s}$ are the set of prime divisors $\pme$ of $D_\cK$ with $\pme\ndivides N^-$. By the assumption \eqref{ST}, we have
\[\bff_{\cD_0}=2^{r_0}\cdot \cR'_{\pme_1}\circ\cR'_{\pme_2}\cdots\circ\cR'_{\pme_s} (\bff_p).\]
Applying \lmref{L:Ihara1.W} and \lmref{L:Ihara2.W}, we conclude that $\bff_{\cD_0}^\dagger=\cR_p(\bff_{\cD_0})$ with $\cR_p:=1-\al_p^2A_p\cdot \varrho(\DII{\p^{-1}}{1})$ is not Eisenstein if $\bff_p\not\in\cM_2(\cO/\uf\cO)_{\Eis}$.
\end{proof}

\subsection{The non-vanishing modulo $\ell$ with anticyclotomic twists}

Suppose that $\ell\not =p$. We prove the non-vanishing of central $L$-values modulo $\ell$ with anticyclotomic twists, using a Galois average trick of Sinnot in \cite{Sinnott:W_Theorem}.
\begin{thm}\label{T:2.W} Let $\chi$ be an anticyclotomic Hecke character of conductor $p^{s_0}$ and weight $(m,-m)$ with $-k/2<m<k/2$. Suppose that
\begin{mylist}
\item $(\pi,\chi)$ satisfies \eqref{ST},
\item $\ell\ndivides 2^{r_0}pND_\cK$ and $\ell>k-2$,
\item $\rhobar_{\pi,\lam}$ is absolutely irreducible.
\end{mylist}
Then for all but finitely many $\nu:\Gamma^-\to\bbmu_{p^\infty}$, we have
\[\frac{L(f_{\pi}/\cK,\chi\nu,\frac{k}{2})}{\Omega_{\pi,N^-}}\not\con 0\pmod{\lam}.\]
\end{thm}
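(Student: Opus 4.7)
The plan is to translate the $L$-value into an evaluation of an $\ell$-adic branch theta element via \propref{P:evaluation.W}, to prove that this theta element has non-vanishing reduction modulo $\lambda$ by adapting the Vatsal--Cornut argument of \thmref{T:1.W}, and to deduce the ``all but finitely many'' conclusion via Weierstrass preparation. Write $\chi_t:=\chi|_\Delta$ and let $\chi_0$ be the corresponding character of $\Gamma^-$. For any $\nu:\Gamma^-\to\bbmu_{p^\infty}$ of conductor $p^s$ and $n=\max\{1,s\}$, \propref{P:evaluation.W} gives
\[
\wh{\chi\nu}\bigl(\Theta^{[m]}_n(\vformB^\dagger)\bigr)^2\cdot u_{\chi\nu}=L(f_\pi/\cK,\chi\nu,k/2)/\Omega_{\pi,N^-},
\]
where $u_{\chi\nu}$ collects $\Gamma(\frac{k}{2}\pm m)$, powers of $p$, $D_\cK$, $A_p^{\pm 1}$, the $p$-adic multiplier $e_p(\pi,\chi\nu)^{2-\Ord_p(N)}$, signs, and the ramification product $\prod_{\pme\mid(D_\cK,N^-),\,\pme=\w^2}(1-\ep(\pi_\pme)\chi(\w))$. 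Hypotheses (2) and \eqref{ST} ensure $u_{\chi\nu}\in\cO^\x$: the $\Gamma$-factors are units because $\ell>k-2$; $A_p$ is a unit because $A_p A_p'=p^{k-1}$ (or $A_p=\pm p^{(k-2)/2}$ when $p\mid N$) and $\ell\neq p$; the ramification product equals $2^{r_0}$ by \eqref{ST}, a unit since $\ell\ndivides 2^{r_0}$; and $e_p(\pi,\chi\nu)=1$ whenever $\chi\nu$ is ramified at $p$, which fails for at most one $\nu$. Assembling the compatible family $\{\chi_t(\Theta^{[m]}_n(\vformB^\dagger))\}_n$ via \lmref{L:7.W} yields $\Theta^{[m]}_\infty(\chi_t)\in\cO\powerseries{\Gamma^-}$, and the theorem reduces to $\wh{\chi_0\nu}(\Theta^{[m]}_\infty(\chi_t))\not\equiv 0\pmod\lambda$ for all but finitely many $\nu$.

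Under the isomorphism $(\cO/\lambda)\powerseries{\Gamma^-}\cong(\cO/\lambda)[[T]]$, a non-zero $\Theta^{[m]}_\infty(\chi_t)\pmod\lambda$ factors by Weierstrass preparation as a unit times a distinguished polynomial $P(T)$ of some degree $d$. Since $\ell\neq p$, distinct $p$-power roots of unity remain distinct modulo $\lambda$, so the values $\chi_0(\gamma)\nu(\gamma)-1\in\bar\cO/\lambda$ for varying $\nu$ are pairwise distinct and at most $d$ of them annihilate $P(T)$. It therefore remains to prove $\Theta^{[m]}_\infty(\chi_t)\not\equiv 0\pmod\lambda$.

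We adapt the argument of \thmref{T:1.W}. Because $\ell\ndivides pN$ makes $\cmptv$ trivial at $\ell$, and the identity $\rho_{k,\ell}(a_\ell^{-1})\bfv_m^*=(a_\ell/\ol{a}_\ell)^m\bfv_m^*$ holds, we obtain
\[
\mForm(x_n(a))=(a_\ell/\ol{a}_\ell)^m\cdot\Lpair{\bfu_m}{\pvformB(x_n(a))},\quad \bfu_m:=\rho_k(\gamma_\frakl^{-1})\bfv_m^*.
\]
The reduction $\bff_{m,\ell}(g):=\Lpair{\bfu_m}{\pvformB(g)}\pmod\lambda$ is a function in $\cM_2(\cU',\cO/\lambda)$ for the open compact $\cU'$ obtained from $\wh R^\x$ by intersecting with the principal congruence subgroup of level $\ell$ at $\ell$ (so that $\rho_k$ acts trivially mod $\lambda$). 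It is non-zero by the $\lambda$-adic normalization of $\vformB$ (note $\det\gamma_\frakl=\sqrtb\delta\in\cO^\x$, hence $\bfu_m\not\equiv 0$), and not Eisenstein by \lmref{L:Eisensteinfunction.W} using $\ell>k-2$ and (3). Averaging with $\chi_t$ over $\cD_0$ and $p$-stabilizing produces $\bff^\dagger_{\cD_0,m}$, which remains non-Eisenstein by \lmref{L:Ihara1.W}, \lmref{L:Ihara2.W}, and the argument of \lmref{L:6.W}---here $\ell\ndivides 2^{r_0}$ is essential to invert the overall coefficient $2^{r_0}$ in $\bff_{\cD_0,m}=2^{r_0}\cR'_{\pme_1}\circ\dots\circ\cR'_{\pme_s}(\bff_{m,\ell})$. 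Since $\rmN(\cU')\supset\prod_{\pme\mid D_\cK}\Z_\pme^\x$ (using $\ell\ndivides D_\cK$), \corref{C:Vatsal_Cornut} produces $n$ and $a\in\hatK$ with $\sum_{\tau\in\cD_1}\bff^\dagger_{\cD_0,m}(x_n(a\tau))\chi_t(\tau)\not\equiv 0\pmod\lambda$, and combined with the $\lambda$-adic unit scalar $(a_\ell/\ol{a}_\ell)^m\iota_\ell\iota_p^{-1}(\ol{a}_p/a_p)^m$ this forces $\Theta^{[m]}_n(\chi_t)\not\equiv 0\pmod\lambda$.

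The main obstacle is the absence, for $\ell\neq p$, of the ``collapse to weight 2'' modulo $p$ of \lmref{L:2.W}: one must work directly with the weight-$k$ pairing $\Lpair{\bfu_m}{\pvformB(\cdot)}$ at a deeper level at $\ell$, and verify that the $\chi_t$-averaging and $p$-stabilization preserve the non-Eisenstein property, which is the content of the $\ell$-adic analogue of \lmref{L:6.W} and relies essentially on $\ell\ndivides 2^{r_0}$.
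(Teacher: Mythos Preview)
Your setup, the $\ell$-adic function $F_\ell$, and the non-Eisenstein verification are essentially correct and parallel the paper. The genuine gap is the Weierstrass preparation step. You assert an isomorphism $(\cO/\lambda)\powerseries{\Gamma^-}\cong(\cO/\lambda)\powerseries{T}$, but here $\cO$ is an $\ell$-adic ring while $\Gamma^-\cong\Z_p$ with $\ell\neq p$. The identification $\gamma\leftrightarrow 1+T$ requires the binomial series $(1+T)^a$ to converge for $a\in\Z_p$, which needs $p$ to be topologically nilpotent in $\cO$; since instead $p\in\cO^\times$, the completed group ring is not local at all. By the Chinese Remainder Theorem $(\cO/\lambda)[\Gamma^-_n]\cong\prod_{m=0}^n(\cO/\lambda)[X]/\Phi_{p^m}(X)$, and hence $(\cO/\lambda)\powerseries{\Gamma^-}\cong\prod_{m\geq 0}(\cO/\lambda)[\zeta_{p^m}]$, an infinite product of finite \'etale algebras over $\cO/\lambda$. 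There is no Weierstrass preparation here, and knowing $\Theta^{[m]}_\infty(\chi_t)\not\equiv 0\pmod\lambda$ merely says that \emph{one} component is nonzero; it gives no control over $\nu(\Theta^{[m]}_\infty(\chi_t))$ for $\nu$ landing in the infinitely many other components.

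This is precisely why the paper invokes Sinnott's Galois-average trick instead. For $\nu$ of conductor $p^n$ one computes $\Tr_{\bfk_\ell(\nu)/\bfk_\ell}\bigl(\al_p^n\wh\chi\nu(a^{-1})\nu(\Theta_n^\chi)\bigr)$ and shows it equals $d_\nu\sum_{\tau\in\cD_1}\wtd F_{\cD_0}(x_n(a\tau))\wh\chi(\tau)$, where $\wtd F_{\cD_0}$ is an additive-character twist of $F_{\cD_0}$ depending only on a primitive $p^s$-th root of unity, $p^s$ being the order of the $p$-Sylow subgroup of $\bfk_\ell^\times$. The crucial point is that $\wtd F_{\cD_0}$ lives at a level independent of $n$ and only finitely many such twists arise, so \corref{C:Vatsal_Cornut} applied to each of them yields a uniform $n_0$ beyond which the trace, and hence $\nu(\Theta_n^\chi)$ itself, is nonzero for every $\nu$ of conductor $p^n$. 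Your non-Eisenstein work for $F_{\cD_0}$ is the right input, but the passage from ``some coefficient of $\Theta_n$ is a unit'' to ``$\nu(\Theta_n)\not\equiv 0$ for each $\nu$ of large conductor'' genuinely requires this extra averaging step that your argument omits.
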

\begin{proof} Choose a finite extension $\cO$ of $\Z_\ell$ in $\C_\ell$ so that $\cO$ contains $\cO_{\pi,\ell}$ and the values of $\chi$ on $\AKf^\x$ and let $\uf$ be a uniformizer of $\cO$. Let $\pvformB^\dagger$ be the $\ell$-adic avatar of the $p$-stabilization $\vformB^\dagger$. Define a function $F_\ell:B^\x\bksl \hatB\to \cO$ by
\[F_\ell(g)=\Lpair{\rho_k(\gamma_\frakl^{-1})\bfv_m^*}{\wh f_{\pi'}^\dagger(g)}\quad(\bfv_m^*=D_\cK^\frac{2-k}{2}\cdot X^{\frac{k-2}{2}-m}Y^{\frac{k-2}{2}+m}).\]
 Note that $\gamma_\frakl\in\GL_2(\cO_{\cK_\frakl})$ as $\ell\ndivides D_\cK$. For each integer $n\geq s_0$, we put \[\Theta_n^{\chi}:=\sum_{[a]_n\in\cG_n}F_\ell(x_n(a))\wh\chi([a]_n)\cdot [a]_n\in \cO[\cG_n],\]
where $\wh\chi:\cG_\infty\to\cO^\x$ is the $\ell$-adic avatar of $\chi$. One checks by definition that
\[(\vp_{\pi'}^{[m]})^\dagger(x_n(a))\chi(a)=F_\ell(x_n(a))\wh\chi(a),\,a\in\hatK,\]
and hence for each $\nu:\Gamma^-\to \bbmu_{p^\infty}$,
\[\nu(\Theta_n^{\chi})=\iota_\ell\iota_p^{-1}(\wh\chi\nu(\Theta_n^{[m]}(\vformB^\dagger))).\]
In view of \propref{P:evaluation.W}, it suffices to show $\nu(\Theta_n^\chi)\not\con 0\pmod{\lam}$ for all but finitely many $\nu$.

Let $\bfk_\ell=\cO/\uf\cO[\bbmu_{p}]$ be the finite extension of the finite field $\cO/\uf\cO$ generated by the values of $\bbmu_p$. Put
\[\cI'=\stt{g\in \wh R^\x\mid g\con 1\pmod{\ell},\,g_p\in \Iwahori}.\]
Then $F_\ell\pmod{\lam}\in\cM_2(\cI',\bfk_\ell)$. Define $F_{\cD_0}\in\cM_2(\cU'_{\cD_0},\bfk_\ell)$ by
\[
F_{\cD_0}(g):=\sum_{d\in\cD_0}\wh\chi(d)F_\ell(gd)\pmod{\lam}\quad(\cU'_{\cD_0}=\cI'\cap \cU_{\cD_0}).\]
It is clear that $F_{\cD_0}$ is invariant by the Iwahori subgroup $\Iwahori$ and is an $U_p$-eigenform with eigenvalue $\al_p$.
Let $p^s$ be the order of the Sylow $p$-subgroup of $\bfk_\ell^\x$. Let $\nu:\Gamma^-_n\to\bbmu_{p^\infty}$ be a character of conductor $p^n$ with $n>\max\stt{s,s_0}$ (so $\nu:\Gamma^-_n\hookto \bbmu_{p^\infty}$ is injective). Put
\[C_n=\stt{\gamma\in\Gamma^-_n\mid \nu(\gamma)\in \bfk_\ell^\x}.\]
Then we have $C_n=\Ker(\cG_{n}\to \cG_{n-s})$. Let $\bfk_\ell(\nu)$ be the field generated by the values of $\nu$ over $\bfk_\ell$. Since $\bfk_\ell$ contains $\bbmu_p$, $d_\nu:=[\bfk_\ell(\nu):\bfk_\ell]$ is a $p$-power, and for a $p$-power root of unity $\zeta\in \bfk_\ell(\nu)$, we have
\[\Tr_{\bfk_\ell(\nu)/\bfk_\ell}(\zeta)=\begin{cases}0&\cdots \zeta\not\in \bfk_\ell,\\
d_\nu&\cdots \zeta\in \bfk_\ell.\end{cases}\]
It follows from the above that for each $a\in \AKf^\x$,
\beq\label{E:9.W}\begin{aligned}
&\Tr_{\bfk_\ell(\nu)/\bfk_\ell}(\al_p^n\wh\chi\nu(a^{-1})\cdot\nu(\Theta_n^\chi)\pmod{\lam})\\
=&d_\nu\cdot\sum_{[u]_n\in C_n}\sum_{\tau\in\cD_1}F_{\cD_0}(x_n(a\tau u))\wh\chi(\tau)\\
=&d_\nu\cdot \sum_{\tau\in\cD_1}\sum_{y\in\Z/p^s\Z}F_{\cD_0}(x_n(a\tau)\pMX{1}{\frac{y}{p^{s}}}{0}{1})\wh\chi(d)\zeta_\nu^y
\end{aligned}\eeq
for some primitive $p^s$-th root of unity $\zeta_\nu$. Define $\wtd F_{\cD_0}\in\cM_2(\bfk_\ell(\nu))$ by
\begin{align*}
\wtd F_{\cD_0}(g):=&\sum_{y\in\Z/p^s\Z}\zeta_\nu^y\varrho(\pMX{1}{\frac{y}{p^{s}}}{0}{1})F_{\cD_0}(g).\end{align*}
Then $\wtd F_{\cD_0}\in\cM_2(\cU',\bfk_\ell(\nu))$ for $\cU'=\cI_1(p^{2s})\cap \cU_{\cD_0}'$.
We can rephrase \eqref{E:9.W} as
\beq\label{E:10.W} \Tr_{\bfk_\ell(\nu)/\bfk_\ell}(\al_p^n\wh\chi\nu(a^{-1})\cdot\nu(\Theta_n^\chi)\pmod{\lam})=d_\nu\cdot\sum_{\tau\in\cD_1}\wtd F_{\cD_0}(x_n(a\tau))\wh\chi(\tau).\eeq
We proceed to show that $\wtd F_{\cD_0}$ is not Eisenstein. Under our assumptions, $F_{\cD_0}$ is not Eisenstein by \lmref{L:Eisensteinfunction.W} and \lmref{L:6.W}. A simple computation shows that
\begin{align*}
\sum_{a\in(\Z/p^s\Z)^\x}\varrho(\DII{a}{1})\wtd F_{\cD_0}=&\sum_{a\in(\Zp/p^s\Z)^\x}\sum_{y\in\Z/p^s\Z}\zeta_\nu^{ay}\varrho(\pMX{1}{\frac{y}{p^s}}{0}{1})F_{\cD_0}\\
=&p^s\cdot F_{\cD_0}-p^{s-1}\sum_{y\in\Z/p\Z}\varrho(\pMX{1}{\frac{y}{p}}{0}{1})F_{\cD_0}\\
=&p^s\cdot (1-p^{-1}\al_p\cdot \varrho(\DII{\p^{-1}}{1}))F_{\cD_0}.
\end{align*}
The above equation implies that $\wtd F_{\cD_0}$ is not Eisenstein by \lmref{L:Ihara2.W}. On the other hand, it is clear that $\rmN(\cU')\supset\prod_{\pme|D_\cK}\Z_\pme^\x$ by \eqref{E:13.W}. Hence, we can deduce the theorem from \corref{C:Vatsal_Cornut} in view of \eqref{E:10.W}.
\end{proof}


\newtheorem*{hypCR}{Hypothesis (CR${}^+$)}
\section{The comparison between periods}\label{S:periods}
In this section, we compare the periods $\Omega_{\pi,N^-}$ defined in \eqref{E:periodV.W} and Hida's canonical period $\Omega_\pi$. Henceforth, we assume $\ell=p$ and $\vformB^\dagger$ is $p$-ordinary. Let $\bbT(\Gamma_0(N))$ be the Hecke algebra over $\cO$ of the space of elliptic modular forms $S_k(\Gamma_0(N))$. Then $\pi$ gives rise to an $\cO$-algebra homomorphism $\lam_\pi:\bbT(\Gamma_0(N))\to\cO$ such that $\lam_\pi(T_\pme)=\Tr\rho_{\pi,p}(T_\pme)$ for all $\pme\ndivides pN$. Let $\eta_\pi(N)$ be an $\cO$-generator of the congruence ideal $I_{\pi}(N):=\lam_{\pi}(\Ann_{\bbT(\Gamma_0(N))}\Ker\lam_{\pi})\subset \cO$. Then Hida's canonical period $\Omega_\pi$ is defined by
\[\Omega_\pi:=\frac{(4\pi)^k\norm{\newForm}_{\Gamma_0(N)}}{\eta_\pi(N)}.\]
Here we are making use of the fixed embedding $\cO\hookto\Cp\iso\C$. In general, the ratio $\Omega_{\pi,N^-}/\Omega_{\pi}$ lies in $\cO$. This section is devoted to showing that $\Omega_{\pi,N^-}/\Omega_{\pi}\in\cO^\x$ in certain favorable situations. When $k=2$ and $N$ is square-free, under mild assumptions, Pollack and Weston \cite[Theorem\,6.8]{Pollack_Weston:AMU} even give the formula of the ratio $\Omega_{\pi,N^-}/\Omega_{\pi}$ in terms of local Tamagawa components at primes dividing $N^-$. They do not need to assume the ordinary hypothesis, but it is not clear to us if their approach is applicable if $k>2$. Nonetheless, it is pointed out in \cite{Pollack_Weston:AMU} that the statement $\Omega_{\pi,N^-}/\Omega_{\pi}\in\cO^\x$ is equivalent to the freeness of spaces of modular forms on $B$ over the associated Hecke algebra and the vanishing of these local Tamagawa components. Therefore, it is natural to study the comparison between periods $\Omega_{\pi,N^-}$ and $\Omega_{\pi}$ by the standard techniques developed by Wiles, Taylor-Wiles, Diamond and Fujiwara in the proof of "$R=T$" theorems.

Let $G_\Q=\Gal(\Qbar/\Q)$. For each place $\pme$, we fix a decomposition of group $G_\pme$ in $G_\Q$ and let $I_\pme$ be the inertia group in $G_\pme$. Let $\rho_0:=\rhobar_{\pi,p}$ denote the residual Galois representation and let $N_{\rho_0}$ be the prime-to-$p$ part of the Artin conductor of $\rho_0$. Throughout, we assume the following:
\begin{hypCR}\label{A:1.W}
\begin{mylist}
\item The prime $p>k+1$ and $p\ndivides N$.
\item The restriction of $\rho_0$ to the absolute Galois group of $\Q(\sqrt{(-1)^\frac{p-1}{2}p})$ is absolutely irreducible.
\item If $\pme\mid N^-$ and $\pme\con \pm 1\pmod{p}$, then $\pme\mid N_{\rho_0}$.
\item If $\pme\parallel N^+$ and $\pme\con 1\pmod{p}$, then $\pme\mid N_{\rho_0}$.
\item $N_{\rho_0}$ and $N/N_{\rho_0}$ are co-prime.
\end{mylist}
\end{hypCR}
We will prove the following proposition in \subsecref{SS:proof} after preparing some notation and recall basic facts in the first two subsections.
\begin{prop}\label{P:congruence}Suppose the hypothesis (CR${}^+$) holds. If $\rho_0$ is ramified at every prime dividing $N^-$ $($\ie $N^-\mid N_{\rho_0})$, then the congruence ideal $I_\pi(N)$ is generated by $\PetB{\vformB}{\vformB}$. In other words, $\Omega_{\pi,N^-}=u\cdot\Omega_{\pi}$ for some unit $u\in\cO^\x$.
\end{prop}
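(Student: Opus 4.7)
The plan is to follow the Taylor--Wiles--Diamond--Fujiwara approach to ``$R=T$'' and multiplicity one, and then translate the resulting freeness statement into an equality of congruence ideals across the Jacquet--Langlands correspondence. Let $\gm\subset\bbT(\Gamma_0(N))$ be the maximal ideal corresponding to $\rho_0$, and write $\bbT:=\bbT(\Gamma_0(N))_\gm$. Let $\bbT_B$ denote the image of $\bbT$ acting faithfully on the localized module $M:=\padicMF_\wt(N_B,\cO)_\gm$. Since $N^-$ is a squarefree product of an odd number of primes, Jacquet--Langlands yields a surjection $\bbT\surjto\bbT_B$, and both sides are complete Noetherian local $\cO$-algebras, free of finite rank over $\cO$, whose $\lam_\pi$-quotient recovers the Hecke eigensystem of $\pi$.

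Next I would introduce the universal deformation ring $R_\Sigma$ of $\rho_0$ with the local conditions dictated by the level: minimal at primes $\pme\|N^+$, unramified-or-Steinberg at primes in $N^-$ (which makes sense precisely because $\rho_0$ is assumed ramified at each prime of $N^-$), and crystalline-ordinary at $p$. Hypothesis (CR${}^+$) is tailored so that each local deformation functor is represented by a smooth or complete intersection $\cO$-algebra, and so that the Taylor--Wiles--Kisin patching machinery produces simultaneously (i) an isomorphism $R_\Sigma\isoto\bbT_B$ of complete intersection $\cO$-algebras and (ii) the freeness of $M$ as a $\bbT_B$-module of rank one. The key supporting inputs are Diamond's analysis of minimal deformations at primes $\pme\|N$, Fujiwara's treatment of the definite quaternionic setup, and an Ihara-type lemma on $B^\x$ (a mild reinforcement of \lmref{L:Ihara1.W}) permitting auxiliary Taylor--Wiles primes $q\equiv 1\pmod{p^n}$ to be added without disturbing the localized quaternionic Hecke module.

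Granting Step two, the $\lam$-adically normalized form $\vformB$ generates the $\pi$-isotypic line in $M$ over $\cO$, and by freeness it generates $M$ as a $\bbT_B$-module. The $\bbT_B$-equivariant Petersson pairing $\PetB{\cdot}{\cdot}$ on $M$ is then perfect up to the single scalar $\PetB{\vformB}{\vformB}\in\cO$, and the standard ``congruence module equals discriminant of pairing'' formalism (cf.\ \cite{Hida:congruence_number} and \cite{Pollack_Weston:AMU}) identifies the congruence ideal of $\lam_\pi\colon\bbT_B\to\cO$ with $(\PetB{\vformB}{\vformB})$. Applying the same patching argument on the $\GL_2$ side, or equivalently invoking the composite isomorphism $\bbT\iso R_\Sigma\iso\bbT_B$, one sees that the congruence ideals of $\lam_\pi$ on $\bbT$ and on $\bbT_B$ coincide. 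Hence $I_\pi(N)=(\PetB{\vformB}{\vformB})$, which yields $\eta_\pi(N)=u\cdot\PetB{\vformB}{\vformB}$ for some $u\in\cO^\x$, and therefore $\Omega_{\pi,N^-}=u\cdot\Omega_\pi$ as claimed.

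The main obstacle is verifying that the Taylor--Wiles patching proceeds cleanly on the definite quaternion algebra $B$ with this specific set of local conditions, and this is where every clause of (CR${}^+$) is used: the irreducibility hypothesis on $\rho_0|_{G_{\Q(\sqrt{(-1)^{(p-1)/2}p})}}$ supplies the Galois-cohomological surjectivity needed to produce enough Taylor--Wiles primes; clauses (3) and (4) rule out the undesirable level-raising congruences at primes $\pme\equiv\pm 1\pmod p$ that would otherwise obstruct the numerical Wiles criterion; and clause (5) ensures smoothness of the local deformation functor at primes with $\pme^2\|N$ and $\pme\equiv -1\pmod p$. Granted these, the patching argument goes through exactly as in the work of Wiles, Taylor--Wiles, Diamond, and Fujiwara, and the resulting freeness is precisely what is needed to match the two congruence ideals.
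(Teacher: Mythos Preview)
Your proposal is correct and follows essentially the same strategy as the paper: Taylor--Wiles--Diamond patching on the quaternionic side yields freeness of $\padicMF_\wt(N_B,\cO)_\gm$ over the localized Hecke algebra, whence the congruence ideal equals $(\PetB{\vformB}{\vformB})$, and the hypothesis $N^-\mid N_{\rho_0}$ makes the Jacquet--Langlands map $\bbT(\Gamma_0(N))_\gm\to\bbT_\Sigma$ an isomorphism so that the two congruence ideals coincide. The paper's execution is slightly more granular than your sketch: rather than invoking patching at full level $N$ directly, it first treats the minimal level $N_\emptyset$, then the intermediate level $N_{\Sgsc}$ via the method of \cite{Taylor:Ihara_avoidance}, and finally passes to $N$ by explicitly bounding the growth of the congruence module along level-raising maps (\lmref{L:congruencenumber}) against the growth of the cotangent space of the deformation ring (\corref{C:2.W}), concluding with Diamond's numerical criterion.
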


\subsection{Hecke algebras and congruence ideals}
For an open compact subgroup $U\subset \hatB$, put $\cS(U):=\padicMF_{\wt}(U,\cO)$. For $g\in\hatB$, Let $[U_1gU_2]\in\Hom_\cO(\cS(U_2),\cS(U_1))$ be the Hecke operator defined by
\[[U_1 g U_2]f(g)=\sum_{i}\rho_{k,p}(g_i)f(gg_i)\quad(U_1 g U_2= \disjoint_{i}g_i U_2).\]
Let $M^+$ be a positive integer with $(M^+,N^-)=1$ and let $M=N^-M^+$. Recall that $R_{M^+}$ denotes the Eichler order of level $M^+$. We put $\cU_M:=\wh R_{M^+}^\x$ and $\cS(M)=\cS(\cU_M)$. If $\pme\ndivides M$, let $T_\pme$ denote the operator
\[[\cU_M \DII{\pme}{1} \cU_M],\] and if $\pme\mid M$, let
\[U_\pme= \left[\cU_M \DII{\pme}{1} \cU_M\right] \text{ for }\pme\ndivides N^-\text{ and }
U_\pme=\left[\cU_M \uf_\pme \cU_M\right]\text{ for } \pme\mid N^-,
\]
where $\uf_\pme\in B^\x_\pme$ such that $\rmN(\uf_\pme)=\pme$.
Let $\bbT(M)$ be the Hecke algebra generated over $\cO$ by Hecke operators $T_\pme\text{ for }\pme\ndivides M$ and $U_\pme\text{ for }\pme|M$ in $\End_\cO\cS(M)$. Define the perfect pairing $\pairing_{M}:\cS(M)\x \cS(M)\to\cO$ by
 \beq\label{E:pairing}\pair{f_1}{f_2}_{M}:=\sum_{[g]} \Lpair{f_1(g)}{f_2(g\tau^{M^+})}\cdot(\#(B^\x\cap  g \cU_M g^{-1}\hatQ)/\Q^\x))^{-1},\eeq
 where $[g]$ runs over $B^\x\bksl \hatB/\cU_M\hatQ$. It is easy to verify that
 \[\pair{t f_1}{f_2}_{M}=\pair{f_1}{tf_2}_M\text{ for all }t\in\bbT(M).\]

Let $\lam_{\pi'}:\bbT(N)\to\cO$ be the $\cO$-algebra homomorphism induced by $\pi'$. Let $N^-_1$ be the product of prime factors of $N^-$ but not dividing $N_{\rho_0}$. Note that for each $\pme\mid N^-_1$ we have $\pme\not\con\pm 1\pmod{p}$ and $\Tr\rho_0(\Frob_\pme)^2\con (1+\pme)^2\pmod{p}$ by (CR${}^+$). Put $N_\emptyset:=N_{\rho_0}\cdot N^-_1$.
By the level lowering/raising (\cf\cite{Jarvis:level_lowering} and \cite{Diaomond_Taylor:Inventione}), there exists a modular lift $\lam_\emptyset:\bbT(N_\emptyset)\to\cO$ such that $\lam_\emptyset(T_\pme)\con \lam_{\pi'}(T_\pme)\pmod{\frakm_\cO}$ for all $\pme\ndivides N$ $(\frakm_\cO=\cO\cap\lam)$. We write
\[N=N_\emptyset\prod_{\pme}\pme^{m_\pme}.\]
Under the hypothesis (CR${}^+$), it is known (\cf\cite[pp.435-436]{Diaomond_Taylor:Inventione}) that
\begin{itemize}
\item $m_\pme\leq 2$;
\item $m_\pme=0$ unless $\pme\mid N^+$ and $\pme\ndivides N_\emptyset$;
\item If $m_\pme=1$, then $\pme\not\con 1\pmod{p}$.
\end{itemize}
 Let $\Sigma$ be a subset of prime factors of $N/N_\emptyset$. Set $N_\Sigma:=N_\emptyset\cdot \prod_{\pme\in\Sigma}\pme^{m_\pme}$.
 Let $\frakm_\Sigma$ be the maximal ideal of $\bbT(N_\Sigma)$ generated by
\[\frakm_\cO,\,T_\pme-\lam_\emptyset(T_\pme)\text{ for }\pme\ndivides N_\Sigma,\,U_\pme-\lam_{\pi'}(U_\pme)\text{ for }\pme \mid N_{\Sigma}.\]
Let $\bbT_{\Sigma}:=\bbT(N_\Sigma)_{\frakm_\Sigma}$ be the localization at $\frakm_\Sigma$.
\begin{lm}\label{L:9.W}\begin{mylist}\item If $\pme^2\mid N_\Sigma$, then the Hecke operator $U_\pme=0$ in $\bbT_\Sigma$. \item The Hecke algebra $\bbT_\Sigma$ is reduced.\end{mylist}
\end{lm}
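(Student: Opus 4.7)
The plan is to prove assertion (2) first and deduce (1) from it together with local--global compatibility. For (2), I would use that $\bbT_\Sigma$ acts faithfully on $\cS(N_\Sigma)_{\frakm_\Sigma}$ via the pairing $\pairing_{N_\Sigma}$ in \eqref{E:pairing}; since $\cS(N_\Sigma)_{\frakm_\Sigma}$ is $\cO$-free, $\bbT_\Sigma$ is $\cO$-torsion-free. After base change to an algebraic closure $\ol{E}$ of $E:=\Frac\cO$, Jacquet--Langlands transfer and the Eichler--Shimura construction identify the simple $\bbT_\Sigma\otimes_\cO\ol E$-modules inside $\cS(N_\Sigma)_{\frakm_\Sigma}\otimes_\cO\ol E$ with systems of Hecke eigenvalues coming from cuspidal automorphic representations $\pi_f$ on $\GL_2(\A)$ residually congruent to $\pi$. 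Strong multiplicity one at $\GL_2$, together with Atkin--Lehner theory for distinguishing $U_\pme$-stabilizations of old forms, shows that $\bbT_\Sigma\otimes_\cO\ol E$ embeds into a finite product of copies of $\ol E$, and is therefore reduced; since $\bbT_\Sigma\hookrightarrow\bbT_\Sigma\otimes_\cO\ol E$, reducedness descends.

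For (1), granted (2), it suffices to check $\lam_\frakq(U_\pme)=0$ for every minimal prime $\frakq$ of $\bbT_\Sigma$, since reducedness means $U_\pme$ lies in the intersection of all minimal primes. Fix such a $\frakq$, pick an eigenform $f=f_\frakq\in\cS(N_\Sigma)_{\frakm_\Sigma}\otimes\ol E$ realizing $\lam_\frakq$, and let $\rho_f$ be its $p$-adic Galois representation. From $\pme^2\mid N_\Sigma$ and the enumerated constraints on the $m_\pme$'s recalled before the lemma, $m_\pme=2$, $\pme\mid N^+$, and $\pme\not\equiv 1\pmod p$; consequently, since $m_\pme=2$, the residual condition (CR${}^+$)(v) gives that $\rhobar_{\pi,\lam}|_{I_\pme}$ is irreducible. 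Any $I_\pme$-stable line in $\rho_f$ would reduce modulo $\lam$ to an $I_\pme$-stable line in $\rhobar_{\pi,\lam}$, contradicting irreducibility; hence $\rho_f|_{I_\pme}$ is itself irreducible. By local--global compatibility and local Langlands for $\GL_2$, the local constituent $\pi_{f,\pme}$ is a supercuspidal representation of conductor $\pme^2$ with trivial central character, and on the one-dimensional line of $U_0(\pme^2)$-invariants the $U_\pme$-operator acts as zero, yielding $\lam_\frakq(U_\pme)=0$.

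The main obstacle will be the semisimplicity statement at the heart of (2): one must rule out repeated $U_\pme$-eigenvalues coming from old-form $\pme$-stabilizations, which would otherwise force nilpotents in $\bbT_\Sigma\otimes_\cO\ol E$. Under (CR${}^+$), and in particular with $p>k+1$ placing us in the Fontaine--Laffaille range, the prescribed conditions on $\rhobar_{\pi,\lam}$ at primes dividing $N$ pin down each local component of a lift. A cleaner formulation, and the one implicitly invoked by the paper's reference to Wiles, Taylor--Wiles and Diamond, would deduce reducedness from the $R_\Sigma\isoto\bbT_\Sigma$ theorem together with the complete intersection property of $R_\Sigma$; being $\cO$-flat and zero-dimensional over $\cO$, a complete intersection $R_\Sigma$ is automatically reduced, and reducedness transports through the isomorphism.
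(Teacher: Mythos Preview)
Your plan reverses the paper's order, and this reversal creates a genuine gap. The paper proves (1) first---for $\pme^2\mid N_\emptyset$ the forms in the localization are necessarily new at $\pme$ (with $a_\pme=0$), and for $\pme^2\mid N_\Sigma/N_\emptyset$ it cites \cite[Corollary~1.8]{Taylor:Ihara_avoidance} (cf.\ \cite[Prop.~2.15]{Wiles:FLT}). The mechanism behind that citation is elementary: on the three--dimensional $\pme$--old space attached to a newform $g$ of level prime to $\pme$, $U_\pme$ has characteristic polynomial $X(X^2-a_\pme(g)X+\pme)$, and since the two nonzero roots multiply to a $p$--unit while $\lam_{\pi'}(U_\pme)=0$, localization at $\frakm_\Sigma$ kills the nonzero eigenspaces, leaving $U_\pme=0$. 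Once (1) is in hand, $U_\pme$ is trivially semisimple whenever $\pme^2\mid N_\Sigma$, and the remaining cases $\pme\parallel N_\Sigma$ are handled either because the forms are new at $\pme$ (so $U_\pme^2$ is scalar) or, for $\pme\parallel N_\Sigma/N_\emptyset$, because (CR${}^+$)(4) forces $\pme\not\equiv 1\pmod p$ so that the Hecke polynomial has distinct roots already modulo $\frakm_\Sigma$.

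Your route to (2) first cannot close without essentially proving (1) along the way: the obstacle you yourself flag---repeated $U_\pme$--eigenvalues producing nilpotents---occurs precisely at primes with $\pme^2\mid N_\Sigma$, and your two proposed fixes do not work. The appeal to (CR${}^+$)(v) is misapplied: that hypothesis requires $\pme\equiv -1\pmod p$, whereas the bulleted constraints you cite only give $\pme\not\equiv 1\pmod p$ when $m_\pme=1$, and say nothing at all about $m_\pme=2$; there is no reason the residual inertia must be irreducible in general. Your fallback, deducing reducedness from $R_\Sigma\simeq\bbT_\Sigma$, is circular in this paper: the lemma is used to set up the Galois deformation argument (e.g.\ to know which generators lie in the image of $R_\Sigma$ in \lmref{L:5.W}), so you cannot invoke $R=\bbT$ to prove it. Finally, your argument for (1) only treats the case $m_\pme=2$ and omits $\pme^2\mid N_\emptyset$ (where $m_\pme=0$); that case is easy, but it should be said.
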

\begin{proof}
Part (1) is clear if $\pme^2\mid N_\emptyset$, and if $\pme^2\mid N_\Sigma/N_\emptyset$, it is proved in \cite[Corollary\,1.8]{Taylor:Ihara_avoidance} (\cf\cite[Proposition 2.15]{Wiles:FLT}). To prove part (2), it suffices to show that $U_\pme$ are semisimple elements in $\bbT_\Sigma$ for $\pme\mid N_\Sigma$. This is clear by part (1) if $\pme\mid N_\emptyset$ or $\pme^2\mid N_\Sigma$, and it follows from (CR${}^+$) (4) if $\pme\parallel N_\Sigma/N_\emptyset$.
\end{proof}
Let $\e_p:G_\Q\to\Zp^\x$ be the \padic cyclotomic character. It is well known that there exists a Galois representation \[\rho_{\Sigma}:G_\Q\to\GL_2(\bbT_\Sigma)\]
such that
\begin{itemize}
\item $\rho_{\Sigma}$ is unramified outside $pN_\Sigma$.
\item $\Tr\rho_\Sigma(\Frob_\pme)=T_\pme\text{ for all }\pme\ndivides pN_\Sigma$ and $\det\rho_\Sigma=\e_p$.
\item There exists a character $\delta_p:G_p\to \bbT_\Sigma^\x$ such that
\[\rho_\Sigma|_{G_p}\sim \pMX{\delta_p^{-1}\e_p}{*}{0}{\delta_p}\text{ and }\delta_p|_{I_p}=\e^{(2-k)/2}.\]
\item For each $\pme\parallel N_\Sigma/N^-_1$, there exists a character $\delta_{\Sigma,\pme}:G_\pme\to\bbT_\Sigma^\x$ such that \[\rho_\Sigma|_{G_\pme}\sim \pMX{\delta_{\Sigma,\pme}^{-1}\e_p}{*}{0}{\delta_{\Sigma,\pme}}\text{ and }\delta_{\Sigma,\pme}(\Frob_\pme)=U_\pme.\]
    Here $\Frob_\pme$ is the arithmetic Frobenius at $\pme$.
\item For $\pme\mid N^-_1$,
\[\rho_\Sigma|_{G_\pme}\sim\pMX{\pm \e_p}{*}{0}{\pm 1},\,*\in\frakm_\Sigma.\]
     \end{itemize}

Let $\pme\mid N/N_\emptyset$ such that $\pme\not\in\Sigma$. We define an element $ u_{\Sigma,\pme}\in\bbT_\Sigma$ and a level-raising map $L_\pme: \cS_{\Sigma}\to \cS_{\Sigma\cup\stt{\pme}}$ as follows.
If $m_\pme=2$, set $ u_{\Sigma,\pme}:=0$ and define \[L_\pme(f)=\pme f-\DII{1}{\pme}T_\pme f+\DII{1}{\pme^2}f.\]
If $m_\pme=1$, then the Hecke polynomial $P_\pme(X)=X^2-T_\pme X+\pme\in\bbT_\Sigma[X]$ is congruent to $(X-\ep_\pme\pme)(X-\ep_\pme)$ modulo $\frakm_\Sigma$ for some $\ep_\pme\in\stt{\pm 1}$. Since $\pme\not\con 1\pmod{p}$, there exists a unique root $ u_{\Sigma,\pme}\in \bbT_\Sigma$ of $P_\pme(X)$ such that $ u_{\Sigma,\pme}\con \ep_\pme\pmod{\frakm_{\Sigma}}$ by Hensel's lemma. Define
\beq\label{E:15.W}L_\pme(f)= u_{\Sigma,\pme} f-\DII{1}{\pme}f.\eeq
In either case, it is easy to verify that $U_\pme\circ L_\pme=L_\pme \circ u_{\Sigma,\pme}$. Moreover, $L_\pme$ induces a surjective map $\bbT_{\Sigma\cup\stt{\pme}}\to\bbT_{\Sigma}$, sending $U_\pme$ to $ u_{\Sigma,\pme}$ by the following lemma:
\begin{lm}\label{L:8.W}The map $L_\pme$ is injective, and $\cS_{\Sigma\cup\stt{\pme}}/L_\pme(\cS_\Sigma)$ is a free $\cO$-module.
\end{lm}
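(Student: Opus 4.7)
The plan is to analyze $L_\pme$ through the classical old-form embedding at $\pme$ and reduce both the injectivity of $L_\pme$ and the $\cO$-freeness of its cokernel to a single mod-$\uf$ injectivity statement, which is a form of Ihara's lemma for the definite quaternion algebra $B$. Set $V_\pme f:=\DII{1}{\pme}f$ and consider the old-form embedding
\[
\iota_\pme \colon \cS_\Sigma^{\oplus(m_\pme+1)} \longto \cS_{\Sigma\cup\stt{\pme}}, \qquad (f_0,\ldots,f_{m_\pme}) \longmapsto \sum_{i=0}^{m_\pme} V_\pme^i f_i.
\]
A direct inspection of the defining formulae gives $L_\pme(f)=\iota_\pme(u_{\Sigma,\pme}f,-f)$ when $m_\pme=1$ and $L_\pme(f)=\iota_\pme(\pme f,-T_\pme f,f)$ when $m_\pme=2$. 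Granting the $\cO$-linear injectivity of $\iota_\pme$ (a standard fact following from the linear independence of the degeneracy operators at the $\pme$-part), the injectivity of $L_\pme$ is immediate in both cases: reading off the first coordinate of $\iota_\pme^{-1}L_\pme(f)$ yields $u_{\Sigma,\pme}f=0$ (resp.\ $\pme f=0$), and since $u_{\Sigma,\pme}\con\ep_\pme=\pm1\pmod{\frakm_\Sigma}$ is a unit in the local ring $\bbT_\Sigma$ acting on $\cS_\Sigma$ (resp.\ $\pme\in\cO^\x$ as $p\nmid\pme$), one concludes $f=0$.

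For the freeness of the cokernel, $\cS_{\Sigma\cup\stt{\pme}}$ is $\cO$-finite and torsion-free, hence free over the discrete valuation ring $\cO$, and so freeness of $\cS_{\Sigma\cup\stt{\pme}}/L_\pme(\cS_\Sigma)$ is equivalent to saturation of $L_\pme(\cS_\Sigma)$, which in turn is equivalent to the injectivity of the reduction $\bar L_\pme\colon\cS_\Sigma/\uf\cS_\Sigma\to\cS_{\Sigma\cup\stt{\pme}}/\uf\cS_{\Sigma\cup\stt{\pme}}$. The preceding argument runs verbatim modulo $\uf$ provided the reduction $\bar\iota_\pme$ is still injective, and this last statement is the main obstacle of the proof. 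The plan is to establish it by an argument parallel to those of Lemmas~\parref{L:Ihara1.W} and~\parref{L:Ihara2.W}: a nonzero element in the kernel of $\bar\iota_\pme$ would, after combination with appropriate Hecke operators at $\pme$ and an application of strong approximation for the algebraic group $B^1$, produce a nontrivial $B^1(\Af)$-invariant residual form, contradicting the non-Eisenstein-ness of the mod-$\uf$ reduction of $\vformB$ guaranteed by Lemma~\parref{L:Eisensteinfunction.W} together with the absolute irreducibility of $\rhobar_{\pi,\lam}$ from hypothesis (CR${}^+$)(2). Once this mod-$\uf$ injectivity of $\iota_\pme$ is secured, the freeness of the cokernel follows, completing the proof.
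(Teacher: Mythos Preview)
Your approach is correct and matches what the paper invokes (Taylor's Lemma~3.1, cited alongside \lmref{L:Ihara2.W}): factor $L_\pme$ through the oldform embedding $\iota_\pme$, reduce both assertions to the mod-$\uf$ injectivity of $\iota_\pme$, and prove the latter by the Ihara/strong approximation argument. One small correction to the final step: the contradiction should not appeal to \lmref{L:Eisensteinfunction.W}, which concerns only the specific form $\pvformB$; the Ihara argument produces a nonzero $B^1(\Af)$-invariant element of $\cS_\Sigma/\uf\cS_\Sigma$ (not necessarily related to $\pvformB$), and such an element cannot exist because $\cS_\Sigma$ is the \emph{localization} of $\cS(N_\Sigma)$ at the non-Eisenstein maximal ideal $\frakm_\Sigma$---any Eisenstein eigensystem is incompatible with the irreducible $\rho_0$, so localization at $\frakm_\Sigma$ kills all Eisenstein functions.
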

\begin{proof}This is \cite[Lemma\,3.1]{Taylor:Ihara_avoidance} (\cf\lmref{L:Ihara2.W}).
\end{proof}

The above construction gives rise to a homomorphism $\bbT_{\Sigma}\surjto\bbT_{\emptyset}$. If $\lam:\bbT_\Sigma\to\cO$ is an $\cO$-algebra homomorphism, we write $I_\lam$ for the kernel of $\lam$ and put
\[\cS_\Sigma[\lam]:=\stt{x\in\cS_\Sigma\mid I_{\lam}x=0}.\]
Let $\lam_\Sigma:\bbT_\Sigma\to\bbT_\emptyset\stackrel{\lam_\emptyset}\longto \cO$ be the composition. 
The $\cO$-module $\cS_\Sigma[\lam_\Sigma]$ is free of rank one by the strong multiplicity one theorem and the reducedness of $\bbT_\Sigma$. Let $\cS_\Sigma[\lam_\Sigma]^\perp$ be the $\cO$-module defined by
\[\cS_\Sigma[\lam_\Sigma]^\perp=\stt{x\in \cS_\Sigma[\lam_\Sigma]\ot_\cO E\mid \pair{x}{y}_{N_\Sigma}\in\cO\text{ for all }y\in \cS_\Sigma[\lam_\Sigma]},\]
where $E$ is the fraction field of $\cO$.
Then $\cS_\Sigma[\lam_\Sigma]^\perp\supset \cS_\Sigma[\lam_\Sigma]$. We let $C(N_\Sigma):=\cS_\Sigma[\lam_\Sigma]^\perp/\cS_\Sigma[\lam_\Sigma]$ be the congruence module of $\lam_\Sigma$ and let $\eta_\Sigma=\lam_\Sigma(\Ann_{\bbT_\Sigma}I_{\lam_\Sigma})$ be the congruence ideal of $\lam_\Sigma$. It is known that \beq\label{E:14.W}\# C(N_\Sigma)\leq \#(\cO/\eta_{\Sigma})\eeq and the equality holds if $\cS_\Sigma$ is free over $\bbT_\Sigma$ (so $\bbT_\Sigma$ is Gorenstein).
\begin{lm}\label{L:congruencenumber}If $\pme\parallel N/N_\emptyset$ and $\pme\not\in\Sigma$, then we have \[\#C(N_{\Sigma\cup\stt{\pme}})=\#C(N_{\Sigma})\cdot \#(\cO/(\lam_\emptyset(u_{\emptyset,\pme})^2-1)\cO).\]
\end{lm}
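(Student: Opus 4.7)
The plan is to promote an $\cO$-basis $\phi\in\cS_\Sigma[\lam_\Sigma]$ to an $\cO$-basis of the higher-level $\lam$-eigenspace $\cS_{\Sigma\cup\{\pme\}}[\lam_{\Sigma\cup\{\pme\}}]$ via the level-raising map $L_\pme$ of \eqref{E:15.W}, and then compare the self-pairings \eqref{E:pairing} at the two levels. Under hypothesis (CR${}^+$), the Taylor--Wiles--Diamond--Fujiwara patching argument yields freeness of $\cS_\Sigma$ (respectively, $\cS_{\Sigma\cup\{\pme\}}$) over $\bbT_\Sigma$ (resp.\ $\bbT_{\Sigma\cup\{\pme\}}$), so equality holds in \eqref{E:14.W} and $\#C(N_\Sigma) = \#(\cO/\pair{\phi}{\phi}_{N_\Sigma}\cO)$, with an identical formula at level $N_{\Sigma\cup\{\pme\}}$.

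Set $\phi' := L_\pme(\phi) = u_{\Sigma,\pme}\phi - \DII{1}{\pme}\phi$. The intertwining identity $U_\pme\circ L_\pme = L_\pme\circ u_{\Sigma,\pme}$ noted just before \lmref{L:8.W}, together with the obvious compatibility of $L_\pme$ with $T_{\pme'}$ for $\pme'\neq\pme$, places $\phi'$ inside $\cS_{\Sigma\cup\{\pme\}}[\lam_{\Sigma\cup\{\pme\}}]$. By strong multiplicity one combined with the reducedness of $\bbT_{\Sigma\cup\{\pme\}}$ (\lmref{L:9.W}), this eigenspace is free of rank one over $\cO$; by the injectivity of $L_\pme$ and the $\cO$-freeness of its cokernel (\lmref{L:8.W}), $\phi'$ is an $\cO$-basis.

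The heart of the proof is the local identity
\[
\pair{\phi'}{\phi'}_{N_{\Sigma\cup\{\pme\}}} \;=\; (\text{unit in }\cO)\cdot(u^2-1)\cdot\pair{\phi}{\phi}_{N_\Sigma},\qquad u:=\lam_\emptyset(u_{\emptyset,\pme}).
\]
Expanding the left-hand side bilinearly in $\phi'=u\phi-\DII{1}{\pme}\phi$ and using (i) the adjointness at $\pme$ of $\DII{1}{\pme}$ with $\DII{\pme}{1}$, up to the change of Atkin--Lehner element between the two levels (which differs only in the $\pme$-coordinate); (ii) the Hecke eigenvalue relation $T_\pme\phi = (u+\pme u^{-1})\phi$; and (iii) the explicit comparison of the weights $(\#\Gamma_g)^{-1}$ across the degeneracy maps between $\cU_\Sigma$ and $\cU_{\Sigma\cup\{\pme\}}$, the diagonal and cross contributions collapse to $u^2-1$ times $\pair{\phi}{\phi}_{N_\Sigma}$, up to units coming from the fact that $\pme\not\equiv 1\pmod p$ at each prime with $m_\pme=1$ (enforced by (CR${}^+$)).

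The main obstacle is precisely this local bookkeeping at $\pme$: one must track the modification of the Atkin--Lehner element $\tau^{M^+}$ and of the volume factors in \eqref{E:pairing} upon passing from level $N_\Sigma$ to $N_{\Sigma\cup\{\pme\}}$, and verify that the contribution of the ``other'' root $\pme u^{-1}$ of the Hecke polynomial $X^2-T_\pme X+\pme$ combines with that of $u$ to give the clean factor $u^2-1$ rather than $(u-\pme u^{-1})$ or a similar variant. This is parallel to the level-raising computations of Ribet--Takahashi and Pollack--Weston \cite{Pollack_Weston:AMU} in the weight-two setting, adapted here to general even weight $k$ via the $\rho_{k,p}$-equivariant bracket built into \eqref{E:pairing}; once the identity is established, the lemma follows by comparing $\cO$-lengths.
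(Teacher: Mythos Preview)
Your approach is essentially the paper's: both reduce the statement to computing the eigenvalue of $L_\pme^*\circ L_\pme$ on $\cS_\Sigma[\lam_\Sigma]$, where $L_\pme^*$ is the adjoint of $L_\pme$ with respect to the pairings $\pairing_{N_\Sigma}$ and $\pairing_{N_{\Sigma\cup\{\pme\}}}$. The paper makes this explicit: it writes $L_\pme^*=u_{\Sigma,\pme}\cdot[\cU\DII{\pme}{1}\cU_1]-[\cU\cU_1]$ and computes $L_\pme^*\circ L_\pme=u_{\Sigma,\pme}^{-1}(u_{\Sigma,\pme}^2-1)(u_{\Sigma,\pme}^2-\pme)$, so the extra factor $u_{\Sigma,\pme}^{-1}(u_{\Sigma,\pme}^2-\pme)$ is a unit precisely because $\pme\not\equiv 1\pmod p$. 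Your bilinear expansion, once carried through, arrives at the same identity; what you flag as ``local bookkeeping'' is exactly this double-coset computation.

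There is, however, a logical slip you should correct. You invoke the Taylor--Wiles--Diamond--Fujiwara freeness of $\cS_\Sigma$ over $\bbT_\Sigma$ to obtain $\#C(N_\Sigma)=\#(\cO/\pair{\phi}{\phi}_{N_\Sigma}\cO)$. In the paper's architecture this is circular: the freeness result (\propref{P:3.W}) is proved \emph{using} the present lemma. Fortunately you do not need freeness here at all: by the definition $C(N_\Sigma)=\cS_\Sigma[\lam_\Sigma]^\perp/\cS_\Sigma[\lam_\Sigma]$ with $\cS_\Sigma[\lam_\Sigma]=\cO\phi$ of rank one, one has $\cS_\Sigma[\lam_\Sigma]^\perp=\pair{\phi}{\phi}_{N_\Sigma}^{-1}\cO\phi$ directly, hence $\#C(N_\Sigma)=\#(\cO/\pair{\phi}{\phi}_{N_\Sigma}\cO)$. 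The inequality \eqref{E:14.W} and its saturation under freeness concern the congruence \emph{ideal} $\eta_\Sigma$, which plays no role in the present lemma. Once you drop the appeal to freeness and carry out the Hecke computation of $L_\pme^*\circ L_\pme$ (rather than just sketching it), your proof coincides with the paper's.
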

\begin{proof}Note that $\lam_\emptyset(u_{\emptyset,\pme})\not =1$ as $\lam_\emptyset$ is unramified outside $N_\emptyset$. Let $L_\pme^*:\cS_{\Sigma}\to\cS_{\Sigma\cup\stt{\pme}}$ be the adjoint map of $L_\pme$ with respect to $\pairing_{N_{\Sigma}}$ and $\pairing_{N_{\Sigma\cup\stt{\pme}}}$. It follows from \lmref{L:8.W} that \beq\label{E:11.W}L_\pme(\cS_\Sigma[\lam_\Sigma])=\cS_{\Sigma\cup\stt{\pme}}[\lam_{\Sigma\cup\stt{\pme}}]\,;\,
L_\pme^*(\cS_{\Sigma\cup\stt{\pme}}[\lam_{\Sigma\cup\stt{\pme}}]^\perp)=\cS_\Sigma[\lam_\Sigma]^\perp.\eeq
Let $\cU=\cU_{N_{\Sigma}}$ and $\cU_1=\cU_{N_{\Sigma\cup\stt{\pme}}}$. A direct computation shows that
\[L_\pme^*= u_{\Sigma,\pme}\cdot [\cU\DII{\pme}{1}\cU_1]-[\cU\cU_1].\]
Therefore, we find that
\begin{align*}
L^*_\pme\circ L_\pme
=& u_{\Sigma,\pme}\cdot[\cU\DII{\pme}{1}\cU]\cdot  u_{\Sigma,\pme}- u_{\Sigma,\pme}\cdot [\cU\DII{\pme}{\pme}\cU]\cdot (1+\pme)-[\cU\cU]\cdot (1+\pme) u_{\Sigma,\pme}+[\cU\DII{1}{\pme}\cU]\\
=&(1+ u_{\Sigma,\pme}^2)\cdot T_\pme -2 u_{\Sigma,\pme}\cdot (1+\pme)\\
=& u_{\Sigma,\pme}^{-1}( u_{\Sigma,\pme}^2-1)( u_{\Sigma,\pme}^2-\pme).
\end{align*}
Since $ u_{\Sigma,\pme}^2-\pme\con 1-\pme\not\con 0\pmod{\frakm_\Sigma}$, by \eqref{E:11.W} we find that
\begin{align*}
\#C(N_{\Sigma\cup\stt{\pme}})&=\#(\cS_\Sigma[\lam_\Sigma]^\perp/L^*_\pme\circ L_\pme(\cS_\Sigma[\lam_\Sigma]))\\
&=\#C(N_\Sigma))\cdot \#\cO/(\lam_\emptyset(u_{\emptyset,\pme})^2-1)\cO.\qedhere
\end{align*}
\end{proof}

\subsection{Deformation rings and Selmer groups}
We introduce a certain deformation ring. Recall that $\rho$ is a deformation of $\rho_0$ if $\rho\pmod{\frakm_A}\iso \rho_0$. Consider the functor $\frakD_\Sigma$ from local Noetherian complete $\cO$-algebras with the residual field $\bfk$ to sets which sends $A$ with the maximal ideal $\frakm_A$ to the isomorphism classes of deformations $\rho:G_\Q\to\GL_2(A)$ of $\rho_0$ satisfying:\begin{itemize}
\item[(D1)] $\det \rho=\e_p$;
\item[(D2)] $\rho$ is minimally ramified outside $N^-_1\Sigma$ in the sense of \cite[Definition\,3.1]{Diamond:Boston};
\item[(D3)] There exists a character $\delta_p:G_\pme\to A^\x$ such that \[\rho|_{G_p}\sim \pMX{\delta_p^{-1}\e_p}{*}{0}{\delta_p}\text{ and }\delta_p|_{I_p}=\e_p^{(2-k)/2};\]
\item[(D4)] For each $\pme\parallel N_\Sigma/N_\emptyset$, there exists a unramified character $\delta_\pme:G_\pme\to A^\x$ such that
\[\rho|_{G_\pme}\sim\pMX{\delta_\pme^{-1}\e_p}{*}{0}{\delta_\pme}\text{ and }\delta_\pme(\Frob_\pme)\con 1\pmod{\frakm_A}.\]
\item[(D5)] If $\pme\mid N^-_1$, then $\rho|_{G_\pme}$ satisfies the sp-condition in \cite[Definition\,2.2]{Terracini:TW}. Namely,
\[\rho|_{G_\pme}\sim\pMX{\pm \e_p}{*}{0}{\pm 1},\,*\in\frakm_A.\]
\end{itemize}
Under (CR${}^+$), it is a standard fact that $\frakD_\Sigma$ is represented by the universal deformation
\[\rho_{R_\Sigma}:G_\Q\to\GL_2(R_\Sigma).\]
The universal property of $R_\Sigma$ gives rise to $\cO$-algebra homomorphisms $R_\Sigma\to R_\emptyset$ and $R_\Sigma\to \bbT_\Sigma$ under which $\rho_{R_\Sigma}$ pushes forward to $\rho_{R_\emptyset}$ and $\rho_{\Sigma}$ respectively.
\begin{lm}\label{L:5.W}The map $R_\Sigma\to \bbT_\Sigma$ is a surjection.
\end{lm}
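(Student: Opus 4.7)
The plan is to show that every Hecke generator of $\bbT_\Sigma$ lies in the image of the canonical morphism $R_\Sigma\to\bbT_\Sigma$ induced by the fact that $\rho_\Sigma$ is a deformation of $\rho_0$ satisfying (D1)--(D5).

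First I would recall that by construction $\bbT_\Sigma$ is generated as an $\cO$-algebra by the operators $T_\pme$ for $\pme\ndivides pN_\Sigma$ together with $U_\pme$ for $\pme\mid N_\Sigma$. Since the universal deformation $\rho_{R_\Sigma}\colon G_\Q\to\GL_2(R_\Sigma)$ pushes forward to $\rho_\Sigma$ under $R_\Sigma\to\bbT_\Sigma$, for each $g\in G_\Q$ the trace $\Tr\rho_{R_\Sigma}(g)\in R_\Sigma$ maps to $\Tr\rho_\Sigma(g)\in\bbT_\Sigma$. Specializing at $g=\Frob_\pme$ for $\pme\ndivides pN_\Sigma$ and invoking the listed property $\Tr\rho_\Sigma(\Frob_\pme)=T_\pme$ shows that every such $T_\pme$ lies in the image.

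Next I would dispose of the $U_\pme$ generators case by case, using the local structure recorded for $\rho_\Sigma$ and mirrored by the deformation conditions defining $R_\Sigma$. At $\pme=p$, condition (D3) furnishes a universal unramified quotient character $\delta_p^{\mathrm{univ}}\colon G_p\to R_\Sigma^\x$ whose value $\delta_p^{\mathrm{univ}}(\Frob_p)\in R_\Sigma$ maps to $\delta_p(\Frob_p)$; this in turn equals $U_p$ by the standard identification of the $U_p$-eigenvalue with the Frobenius eigenvalue on the unramified quotient in the $p$-ordinary case. For $\pme\parallel N_\Sigma/N_\emptyset$, condition (D4) similarly provides $\delta_\pme^{\mathrm{univ}}(\Frob_\pme)\in R_\Sigma$ which maps to $\delta_{\Sigma,\pme}(\Frob_\pme)=U_\pme$. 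For $\pme^2\mid N_\Sigma$, Lemma 6.4(1) gives $U_\pme=0$, which is trivially in the image. Finally, for $\pme\mid N^-_1$, the sp-condition (D5) forces the unramified diagonal character to take the fixed scalar value $\pm 1$, matching the eigenvalue by which $U_\pme$ acts on the one-dimensional local constituent $\pi'_\pme$ on the quaternion side; hence $U_\pme\in\cO$ and is automatically in the image.

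Combining these, every generator of $\bbT_\Sigma$ lies in the image of $R_\Sigma$, which proves surjectivity. The part I expect to require the most care (though it is bookkeeping rather than a genuine obstacle) is matching up the sign normalization for primes $\pme\mid N^-$: one must trace through the local Jacquet-Langlands correspondence at $\pme$, identify the Atkin-Lehner eigenvalue of $\pi'_\pme$ with $\lambda_{\pi'}(U_\pme)$, and verify that this matches the $\pm 1$ scalar appearing in (D5) so that the universal property really hands us $U_\pme$ in the image.
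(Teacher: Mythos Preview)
Your approach is the same as the paper's---verify that each Hecke generator of $\bbT_\Sigma$ lies in the image of $R_\Sigma$---but your case analysis has two inaccuracies.

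First, at the prime $p$: hypothesis (CR${}^+$)(1) gives $p\nmid N$, hence $p\nmid N_\Sigma$, so the operator in question is $T_p$, not $U_p$. Moreover the character $\delta_p$ in (D3) is \emph{not} unramified; the condition reads $\delta_p|_{I_p}=\e_p^{(2-k)/2}$, which is nontrivial for $k>2$, so ``$\delta_p(\Frob_p)$'' is not well-defined as written. The paper simply folds $p$ into the case $\pme\nmid N_\Sigma$ and writes $\Tr\rho_{R_\Sigma}(\Frob_\pme)\mapsto T_\pme$. If one wants to be precise here, one uses (D3) to produce the unramified twist $\delta_p\cdot\e_p^{(k-2)/2}$, whose value on $\Frob_p$ gives the unit root, and recovers $T_p$ from that.

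Second, your enumeration of the $U_\pme$ misses the primes $\pme\parallel N_{\rho_0}$ with $\pme\nmid N^-_1$: these satisfy $\pme\parallel N_\Sigma$ but lie in $N_\emptyset$, so neither your (D4) case ($\pme\parallel N_\Sigma/N_\emptyset$) nor your (D5) case ($\pme\mid N^-_1$) covers them. The paper treats these together with the (D4) primes under the single heading ``$\pme\parallel N_\Sigma/N^-_1$'', using that the minimal ramification condition (D2) at such $\pme$ also forces $\rho_{R_\Sigma}|_{G_\pme}\sim\pMX{\delta^{-1}\e_p}{*}{0}{\delta}$ with $\delta$ unramified, and then $\delta(\Frob_\pme)\mapsto U_\pme$.
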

\begin{proof}By \lmref{L:9.W}, $\bbT_\Sigma$ is generated by $T_\pme$ for $\pme\ndivides N_\Sigma$ and $U_\pme$ for $\pme\parallel N_\Sigma$. If $\pme\ndivides N_\Sigma$, we have $\Tr\rho_{R_\Sigma}(\Frob_\pme)\mapsto T_\pme$. If $\pme| N^-_1$, then $U_\pme=\pm 1$ by the sp-condition. If $\pme\parallel N_\Sigma/N^-_1$, then \[\rho_{R_\Sigma}|_{G_\pme}\iso \pMX{\delta_{R_\Sigma,\pme}^{-1}\e_p}{*}{}{\delta_{R_\Sigma,\pme}}\text{ for a unramified character }\delta_{R_\Sigma,\pme}:G_\pme\to R_\Sigma^\x\]
such that $\delta_{R_\Sigma,\pme}(\Frob_\pme)\con 1\pmod{\frakm_{R_\Sigma}}$, and we have $\delta_{R_\Sigma,\pme}(\Frob_\pme)\mapsto U_\pme$.
\end{proof}
Let $\wp_{\Sigma}$ be the kernel of the $\cO$-algebra morphism:
\[R_{\Sigma}\to R_{\emptyset}\to \bbT_\emptyset\stackrel{\lam_\emptyset}\longto \cO.\]
Let $\Adjrep$ denote the discrete Galois module $\ad^0\rho_{\lam_\emptyset}\ot E/\cO$. Define the subspace $\Adjrep^+$ by
\[\Adjrep^+=\stt{\pMX{a}{b}{0}{-a}\mid a,b\in E/\cO}\subset \Adjrep=\stt{\pMX{a}{b}{c}{-a}\mid a,b,c\in E/\cO}.\]
We define the subgroup $L_\pme\subset H^1(\Q_\pme,\Adjrep)$ as follows.
We denote
$L_\pme=H^1(\Q_\pme,\Adjrep^+)$ if $\pme\parallel  N_\Sigma/N_{\emptyset}$,
$L_\pme=\ker\stt{H^1(\Q_\pme,\Adjrep)\to H^1(\Dmd{F},\Adjrep)}$, where $F$ is a lifting of $\Frob_\pme$ in $G_\pme$ if $\pme\mid N^-_1$, and $L_\pme=H^1_f(\Q_\pme,\Adjrep)$ be the local Bloch-Kato group otherwise. Define the Selmer group by
\begin{align*}
\Sel_{\Sigma}(\Adjrep):=&\ker\stt{H^1(\Q,\Adjrep)\to\prod_{\pme}\frac{H^1(\Q_\pme,\Adjrep)}{L_\pme}}.
\end{align*}
It is not difficult to show that we have an $\cO$-module isomorphism:
\beq\label{E:12.W}\begin{aligned}
\Hom_{\cO}(\wp_{\Sigma}/\wp_{\Sigma}^2,E/\cO)\iso\Sel_{\Sigma}(\Adjrep).
\end{aligned}
\eeq
Let $\Sgsc$ be the set of prime factors $\pme\mid N/N_\emptyset$ with $m_\pme=2$.
\begin{lm}\label{L:1.W}We have a natural inclusion map \[H^1_f(\Q_\pme,\Adjrep)\hookto H^1(\Q_\pme,\Adjrep^+).\]In particular, if $\Sigma\supset \Sgsc$, then we have
\[\#(\Sel_{\Sigma}(\Adjrep))/\#(\Sel_{\Sgsc}(\Adjrep))\mid\prod_{\pme\parallel N_\Sigma/N_\emptyset}\#(H^1(I_\pme,\Adjrep^+)^{G_\pme}).\]\end{lm}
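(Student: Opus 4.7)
The plan is to analyze the local cohomology at primes $\pme \parallel N_\Sigma/N_\emptyset$ and then combine this with the standard comparison of Selmer groups with varying local conditions.

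First I would fix such a $\pme$ and observe that since $\pme \nmid N_\emptyset$, the representation $\rho_{\lam_\emptyset}$ is unramified at $\pme$. Condition (D4) specifies its shape: $\rho_{\lam_\emptyset}|_{G_\pme} \sim \pMX{\delta_\pme^{-1}\e_p}{*}{0}{\delta_\pme}$ with $\delta_\pme$ unramified and $\delta_\pme(\Frob_\pme) \equiv 1 \pmod{\frakm_\cO}$. Under this upper-triangular structure, the global subspace $\Adjrep^+$ is identified with the upper-triangular part of $\ad^0\rho_{\lam_\emptyset}$, and the quotient $\Adjrep/\Adjrep^+$ is the character $\cF/\cO(\e_p^{-1}\delta_\pme^2)$, on which $\Frob_\pme$ acts by $\delta_\pme(\Frob_\pme)^2\pme^{-1}\equiv \pme^{-1} \pmod{\frakm_\cO}$. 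Hypothesis (CR${}^+$)(4) for primes with $m_\pme=1$ forces $\pme \not\equiv 1 \pmod p$, so $\Frob_\pme-1$ acts invertibly on $\Adjrep/\Adjrep^+$; both $(\Adjrep/\Adjrep^+)^{G_\pme}$ and $(\Adjrep/\Adjrep^+)_{G_\pme}$ therefore vanish.

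The long exact sequence attached to $0\to \Adjrep^+ \to \Adjrep \to \Adjrep/\Adjrep^+ \to 0$ then shows that $H^1(\Q_\pme,\Adjrep^+)\to H^1(\Q_\pme,\Adjrep)$ is injective and that its image contains $H^1_f(\Q_\pme,\Adjrep)=H^1_{ur}(\Q_\pme,\Adjrep)$, since the latter maps into $H^1_{ur}(\Q_\pme,\Adjrep/\Adjrep^+)=(\Adjrep/\Adjrep^+)_{G_\pme}=0$. Combining the injectivity with the containment yields the desired natural inclusion $H^1_f(\Q_\pme,\Adjrep)\hookto H^1(\Q_\pme,\Adjrep^+)$; moreover a snake lemma applied to the same sequence identifies $H^1_f(\Q_\pme,\Adjrep)$ with $H^1_{ur}(\Q_\pme,\Adjrep^+)$ inside $H^1(\Q_\pme,\Adjrep^+)$.

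For the Selmer-group comparison, the hypothesis $\Sigma\supset \Sgsc$ guarantees that the local conditions $L_\pme$ defining $\Sel_\Sigma$ and $\Sel_\Sgsc$ coincide outside $\Sigma\setminus \Sgsc$. For each $\pme\in\Sigma\setminus\Sgsc$, necessarily with $m_\pme=1$ and $\pme\parallel N_\Sigma/N_\emptyset$, the condition enlarges from $H^1_f(\Q_\pme,\Adjrep)$ to $H^1(\Q_\pme,\Adjrep^+)$. The first part gives $\Sel_\Sgsc\subset \Sel_\Sigma$, and the snake lemma produces an injection
\[\Sel_\Sigma/\Sel_\Sgsc \hookto \bigoplus_{\pme\in\Sigma\setminus\Sgsc} H^1(\Q_\pme,\Adjrep^+)/H^1_f(\Q_\pme,\Adjrep).\]
Substituting $H^1_f(\Q_\pme,\Adjrep)=H^1_{ur}(\Q_\pme,\Adjrep^+)$ from the first part and invoking the inflation-restriction sequence for $\Adjrep^+$, each summand embeds into $H^1(I_\pme,\Adjrep^+)^{G_\pme}$, and the claimed divisibility of orders follows upon taking the product. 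The main obstacle is the local bookkeeping at $\pme$: one must carefully check that the global definition of $\Adjrep^+$ realizes precisely the $G_\pme$-sub arising from (D4), and that the input $\pme\not\equiv 1\pmod p$ from (CR${}^+$)(4) simultaneously kills $H^0$ and $H_0$ of the quotient character; once these compatibilities are in place the Selmer comparison is formal.
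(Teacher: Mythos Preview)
Your proof is correct and follows essentially the same approach as the paper's own argument: both exploit that $\pme\not\equiv 1\pmod p$ forces the invariants and coinvariants of $\Adjrep^-:=\Adjrep/\Adjrep^+$ under $G_\pme/I_\pme$ to vanish, then combine the long exact sequence for $0\to\Adjrep^+\to\Adjrep\to\Adjrep^-\to 0$ with inflation--restriction to identify $H^1_f(\Q_\pme,\Adjrep)$ with $H^1(G_\pme/I_\pme,\Adjrep^+)$ inside $L_\pme=H^1(\Q_\pme,\Adjrep^+)$, and finish with the standard Selmer comparison. Your explicit computation of the quotient character and your remark on the compatibility of the globally defined $\Adjrep^+$ with the local filtration from (D4) are helpful clarifications that the paper leaves implicit, but they do not constitute a different method.
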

\begin{proof}
Let $\pme\parallel N/N_\emptyset$. Let $\Adjrep^-:=\Adjrep/\Adjrep^+$ be a discrete $G_\pme$-module of $\cO$-corank one. Consider the following diagram:
\[\xymatrix{ 0\ar[r]&H^1_f(G_\pme,\Adjrep)\ar[r]\ar[d]&H^1(G_\pme,\Adjrep)\ar[r]^{\pi}\ar[d]^{\pi_1}& H^1(I_\pme,\Adjrep)^{G_\pme}\ar[d]^{\pi_2}\ar[r]&0\\
0\ar[r]&H^1(G_\pme/I_\pme,\Adjrep^-)\ar[r]&H^1(G_\pme,\Adjrep^-)\ar[r]&H^1(I_\pme,\Adjrep^-)^{G_\pme}\ar[r]&0.
}\]
Since $\pme\not\con 1\pmod{p}$, $H^0(G_\pme,\Adjrep^-)=H^1(G_\pme/I_\pme,\Adjrep^-)=0$. It follows that $\ker\pi_1=L_\pme=H^1(G_\pme,\Adjrep^+)$ and $H^1(G_\pme/I_\pme,\Adjrep^+)\iso H^1_f(G_\pme,\Adjrep)$. By the snake lemma, $H^1_f(G_\pme,\Adjrep)$ is a submodule of $L_\pme$. The second assertion follows from the exact sequence \[0\to\Sel_{\Sgsc}(\Adjrep)\to \Sel_{\Sigma}(\Adjrep)\to\prod_{\pme\parallel N_\Sigma/N_\emptyset}\frac{L_\pme}{H^1_f(G_\pme,\Adjrep)}\]
and the isomorphism
\[H^1(I_\pme,\Adjrep^+)^{G_\pme}\iso H^1(G_\pme,\Adjrep^+)/H^1(G_\pme/I_\pme,\Adjrep^+)\iso L_\pme/H^1_f(G_\pme,\Adjrep).\qedhere\]
\end{proof}
\begin{cor}\label{C:2.W}If $\Sigma$ is the set of prime factors of $N/N_\emptyset$, then we have \[\#(\wp_{\Sigma}/\wp_{\Sigma}^2)\mid \#(\wp_{\Sgsc}/\wp_{\Sgsc}^2)\cdot
\prod_{\pme\parallel N/N_\emptyset}\#(\cO/(\lam_\emptyset(u_{\emptyset,\pme})^2-1)\cO).\]\end{cor}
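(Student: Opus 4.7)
The strategy is to pass via~\eqref{E:12.W} to a statement about Selmer groups, invoke Lemma~\ref{L:1.W} to bound the ratio $\#\Sel_\Sigma(\Adjrep)/\#\Sel_\Sgsc(\Adjrep)$, and then identify each local factor $\#H^1(I_\pme,\Adjrep^+)^{G_\pme}$ with $\#(\cO/(\lam_\emptyset(u_{\emptyset,\pme})^2-1)\cO)$. First I would observe that \eqref{E:12.W} combined with Pontryagin duality gives $\#(\wp_\Sigma/\wp_\Sigma^2)=\#\Sel_\Sigma(\Adjrep)$ and likewise for $\Sgsc$. When $\Sigma$ is the full set of prime divisors of $N/N_\emptyset$ we have $N_\Sigma=N$, so the primes $\pme\parallel N_\Sigma/N_\emptyset$ appearing in Lemma~\ref{L:1.W} are exactly those with $m_\pme=1$, i.e.\ the $\pme\parallel N/N_\emptyset$. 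Lemma~\ref{L:1.W} therefore yields
\[\#\Sel_\Sigma(\Adjrep)\bigm/\#\Sel_\Sgsc(\Adjrep)\;\Bigm|\;\prod_{\pme\parallel N/N_\emptyset}\#H^1(I_\pme,\Adjrep^+)^{G_\pme},\]
reducing matters to the local calculation.

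For the local computation at $\pme\parallel N/N_\emptyset$, note that $\pme\ne p$ by~(CR${}^+$)(1), so both $\e_p$ and the unramified character $\delta_\pme$ of~(D4) are trivial on $I_\pme$; hence $I_\pme$ acts trivially on $\Adjrep^+$. Continuous cohomology thus factors through the pro-$p$ tame quotient $I_\pme^{\mathrm{tame},(p)}\simeq\Zp(1)$, producing the Galois-equivariant identification $H^1(I_\pme,\Adjrep^+)\simeq\Adjrep^+(-1)$. The filtration $0\to V^+\to\Adjrep^+\to V^0\to 0$ has $V^+\simeq\cF/\cO$ with $G_\pme$-action by $\delta_\pme^{-2}\e_p$ (the adjoint action on the strict-upper entry) and $V^0\simeq\cF/\cO$ with trivial action. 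By~(CR${}^+$)(4), $m_\pme=1$ forces $\pme\not\equiv 1\pmod p$, so $(V^0(-1))^{G_\pme}=\cO/(\pme-1)\cO=0$; the long exact sequence then produces
\[H^1(I_\pme,\Adjrep^+)^{G_\pme}\;\simeq\;(V^+(-1))^{G_\pme}\;\simeq\;\cO/\bigl(\delta_\pme^{-2}(\Frob_\pme)-1\bigr)\cO.\]
Since $\lam_\emptyset$ sends $\delta_\pme(\Frob_\pme)$ to $\pm\lam_\emptyset(u_{\emptyset,\pme})$, which is a unit in $\cO$ (being $\equiv\pm 1\pmod{\frakm_\cO}$), squaring eliminates the sign and the order equals $\#(\cO/(\lam_\emptyset(u_{\emptyset,\pme})^2-1)\cO)$.

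The principal technical point is matching the Galois datum $\delta_\pme(\Frob_\pme)$ with the Hecke root $u_{\emptyset,\pme}$, which rests on the compatibility of the deformation condition~(D4) with the Hensel factorization $P_\pme(X)\equiv(X-\ep_\pme\pme)(X-\ep_\pme)\pmod{\frakm_\Sigma}$. After squaring, the sign ambiguity $\ep_\pme\in\{\pm1\}$ becomes immaterial, and assembling the local factors through Lemma~\ref{L:1.W} and~\eqref{E:12.W} yields the asserted divisibility.
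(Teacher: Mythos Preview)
Your proof is correct and follows essentially the same route as the paper: pass to Selmer groups via~\eqref{E:12.W}, apply Lemma~\ref{L:1.W}, and compute the local factor $\#H^1(I_\pme,\Adjrep^+)^{G_\pme}$ at each $\pme\parallel N/N_\emptyset$. The paper computes this last quantity in one stroke as $\#H^0(G_\pme,\Adjrep^+(-1))=\#\bigl(\cO/((\pme-1)(\lam_\emptyset(u_{\emptyset,\pme})^2-1))\cO\bigr)$ and lets the unit $\pme-1$ disappear, whereas you split $\Adjrep^+$ along the filtration $V^+\subset\Adjrep^+$ and kill the $(\pme-1)$-piece first; the two computations are equivalent.
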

\begin{proof}
By \eqref{E:12.W} and \lmref{L:1.W},
\[\#\ker(\wp_{\Sigma}/\wp_{\Sigma}^2\to\wp_{\Sgsc}/\wp_{\Sgsc}^2)=\#(\Sel_{\Sigma}(\Adjrep)/\Sel_{\Sgsc}(\Adjrep))\]
divides
\begin{align*}
\prod_{\pme\parallel N/N_\emptyset}\#(H^1(I_\pme,\Adjrep^+)^{G_\pme})
=&\prod_{\pme\parallel N/N_\emptyset}\#H^0(G_\pme,\Adjrep^+(-1))\\
=&\prod_{\pme\parallel N/N_\emptyset}\#(\cO/((\pme-1)(\lam_\emptyset(u_{\emptyset,\pme})^2-1)\cO)).\qedhere
\end{align*}
\end{proof}
\subsection{Proof of \propref{P:congruence}}\label{SS:proof}Let $\Sigma$ be the set of prime factors of $N/N_\emptyset$. We begin with the following proposition on the freeness of the Hecke module $\cS_\Sigma$.
\begin{prop}\label{P:3.W} With the hypothesis $(\mathrm{CR}{}^+)$, $\cS_\Sigma$ is a free $\bbT_\Sigma$-module of rank one.\end{prop}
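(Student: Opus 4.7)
The plan is to prove Proposition~\ref{P:3.W} by the Taylor--Wiles--Fujiwara--Diamond patching method adapted to modular forms on definite quaternion algebras, following Diamond \cite{Diamond:Boston}, Terracini \cite{Terracini:TW} (for the sp-condition at primes in $N^-_1$), and Taylor \cite{Taylor:Ihara_avoidance} (for primes $\pme$ with $m_\pme=2$). The argument will split into two steps: first establish freeness in a suitable minimal case, and then propagate to the full $\Sigma$ by an inductive level-raising argument on the Steinberg-type primes.

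For the minimal case, take $\Sigma_0 = \stt{\pme\mid N/N_\emptyset : m_\pme=2}$, so that $\frakD_{\Sigma_0}$ is minimally ramified outside the rigid sp-condition at $N^-_1$-primes and the Ihara-avoidance condition at $\Sigma_0$-primes. Hypothesis $(\mathrm{CR}^+)$, in particular (2), guarantees that $\rho_0$ has large enough image on $\Gal(\Qbar/\Q(\sqrt{(-1)^{(p-1)/2}p}))$ to produce, for each $n\geq 1$, a Taylor--Wiles set $Q_n$ of primes $\pme\con 1\pmod{p^n}$ at which $\rho_0(\Frob_\pme)$ has distinct eigenvalues in $\bfk^\x$, and so that the tangent-space identity $\dim_{\bfk}\Sel_{\Sigma_0\cup Q_n}(\Adjrep[\frakm_\cO])=|Q_n|$ required for patching holds. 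Applying the standard patching argument to the system $\stt{R_{\Sigma_0\cup Q_n},\bbT_{\Sigma_0\cup Q_n},\cS_{\Sigma_0\cup Q_n}}$ yields in the limit an isomorphism $R_\infty\iso \bbT_\infty$ of complete intersections, together with freeness of $\cS_\infty$ of rank one over it. Descending gives $R_{\Sigma_0}\iso \bbT_{\Sigma_0}$ and freeness of $\cS_{\Sigma_0}$ over $\bbT_{\Sigma_0}$.

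For the inductive step, suppose $\cS_\Sigma$ is free of rank one over $\bbT_\Sigma$ for some $\Sigma\supset\Sigma_0$, and let $\pme\parallel N/N_\emptyset$ with $\pme\notin\Sigma$. By Lemma~\ref{L:8.W}, the level-raising map $L_\pme\colon\cS_\Sigma\to \cS_{\Sigma\cup\stt{\pme}}$ of \eqref{E:15.W} is injective with $\cO$-free cokernel and is $\bbT_{\Sigma\cup\stt{\pme}}$-equivariant through $U_\pme\mapsto u_{\Sigma,\pme}$. Lemma~\ref{L:congruencenumber} shows that the congruence module of $\lam_{\Sigma\cup\stt{\pme}}$ differs from that of $\lam_\Sigma$ by exactly the factor $\#(\cO/(\lam_\emptyset(u_{\emptyset,\pme})^2-1))$, while Corollary~\ref{C:2.W} bounds the change in $\#(\wp_{\Sigma\cup\stt{\pme}}/\wp_{\Sigma\cup\stt{\pme}}^2)$ by the same factor. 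Combined via Wiles' numerical criterion, these matching bounds are sharp, which forces $R_{\Sigma\cup\stt{\pme}}\iso \bbT_{\Sigma\cup\stt{\pme}}$ to remain a complete intersection and $\cS_{\Sigma\cup\stt{\pme}}$ to remain free of rank one. Iterating over the primes of $\Sigma\setminus\Sigma_0$ completes the proof.

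The main obstacle lies in the minimal-case patching: one must construct the patching system while simultaneously respecting the sp-condition at $N^-_1$-primes (requiring Terracini's modification of the local deformation rings, since the naive sp-deformation functor is not formally smooth) and the Ihara-avoidance condition at $\Sigma_0$-primes with $\pme\con -1\pmod p$ (requiring Taylor's trick, since the usual minimal deformation ring is too large there). The delicate technical bookkeeping is the verification that, after imposing these local conditions, the global tangent space $\Sel_{\Sigma_0\cup Q_n}(\Adjrep[\frakm_\cO])$ still has the exact dimension $|Q_n|$ needed to carry out the patching. Once this is in place, Diamond's freeness criterion finishes off the minimal case in a routine fashion.
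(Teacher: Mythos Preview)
Your proposal is correct and follows essentially the same strategy as the paper: a Taylor--Wiles patching argument (with Terracini's sp-condition at $N^-_1$-primes and Taylor's Ihara-avoidance at the primes with $m_\pme=2$) to establish the base case, followed by Lemma~\ref{L:congruencenumber}, Corollary~\ref{C:2.W}, and Diamond's numerical criterion to pass to the full $\Sigma$. The only organizational difference is that the paper splits the base case into two stages, first $\Sigma=\emptyset$ (the genuinely minimal case where $\#\cL_\pme=\#H^0(G_\pme,\ad^0\rho_0)$ everywhere, handled by the Taylor--Wiles system of \cite[\S2]{Taylor:Ihara_avoidance}) and then $\Sigma=\Sgsc$ (via the Ihara-avoidance comparison of \cite[\S3]{Taylor:Ihara_avoidance}), and then applies the numerical criterion to jump from $\Sgsc$ to the full $\Sigma$ in a single step rather than inductively prime by prime; but the substance is the same.
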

\begin{proof}
First consider the case $\Sigma=\emptyset$. We have
\[\Hom(\frakm_\emptyset/\frakm_\emptyset^2,\cO/\frakm_\cO)=\ker\stt{H^1(\Q,\ad^0\rho_0)\to \prod_\pme\frac{H^1(\Q_\pme,\ad^0\rho_0)}{\cL_\pme}},\]
where $\cL_\pme=H^1_f(\Q_\pme,\ad^0\rho_0)$ if $\pme\ndivides N^-_1$ and $\cL_\pme=\ker\stt{H^1(\Q_\pme,\ad^0\rho_0)\to  H^1(\Dmd{F},\ad^0\rho_0)}$ for a lifting $F$ of $\Frob_\pme$ in $G_\pme$ if $\pme\mid N^-_1$. This is the \emph{minimal} case in the sense that $\#(\cL_\pme)=\#(H^0(G_\pme,\ad^0\rho_0))$ for all $\pme$ (\cite[\S3.4]{Terracini:TW}). Using the Taylor-Wiles system constructed in \cite[\S2]{Taylor:Ihara_avoidance}, we deduce that $\cS_\emptyset$ is a free $\bbT_\emptyset$-module of rank one and
\[\#(\wp_{\emptyset}/\wp_{\emptyset}^2)=\# C(N_{\emptyset})=\#(\cO/\eta_{\emptyset}).\]
Furthermore, the argument in \cite[\S3]{Taylor:Ihara_avoidance} shows that
\[\#(\wp_{\Sgsc}/\wp_{\Sgsc}^2)=\# C(N_{\Sgsc})=\#(\cO/\eta_{\Sgsc}).\]
Combined with \lmref{L:congruencenumber} and \corref{C:2.W}, the above equation yields that
\[\#(\wp_{\Sigma}/\wp_{\Sigma}^2)\mid \#C(N)\mid \#(\cO/\eta_{\Sigma}).\]
The proposition follows from \cite[Theorem\,2.4]{Diamond:TW}.
\end{proof}
Now we are ready to prove \propref{P:congruence}. The Jacquet-Langlands correspondence induces a surjective $\cO$-algebra homomorphism $JL^*:\bbT(\Gamma_0(N))_\frakm\twoheadrightarrow\bbT_{\Sigma}$ such that $\lam_{\pi}=JL^*\circ\lam_{\pi'}$, where $\frakm$ is the maximal ideal containing $\ker\lam_\pi$. The assumption $N^-\mid N_{\rho_0}$ implies $JL^*$ is an isomorphism. On the other hand, by definition $\cS_\Sigma[\lam_{\pi'}]=\cO\cdot \pvformB$ and $\PetB{\vformB}{\vformB}=\pair{\pvformB}{\pvformB}_{N_B}$. Therefore, by \propref{P:3.W} we conclude that
\[\#(\cO/I_\pi(N))=\#(\cS_\Sigma[\lam_{\pi'}]^\perp/\cS_\Sigma[\lam_{\pi'}])=\#(\cO/\PetB{\vformB}{\vformB}\cO).\]
This completes the proof. 
\bibliographystyle{amsalpha}
\bibliography{mybib}
\end{document}